\documentclass[11pt,a4paper]{amsart}

\def\baselinestretch{1.2}
\usepackage{graphicx,amssymb}
\usepackage[top=40mm,bottom=35mm,left=30mm,right=30mm]{geometry}
\usepackage[frame]{xy}
\xyoption{arc}
\xyoption{graph}
\xyoption{matrix}

\theoremstyle{plain}
\newtheorem{theorem}{Theorem}[section]
\newtheorem{lemma}[theorem]{Lemma}
\newtheorem{corollary}[theorem]{Corollary}
\newtheorem{proposition}[theorem]{Proposition}

\theoremstyle{definition}
\newtheorem{definition}[theorem]{Definition}
\newtheorem{example}[theorem]{Example}
\newtheorem{remark}[theorem]{Remark}

\newtheorem{conjecture}[theorem]{Conjecture}


\makeatletter
\newif\if@my@Hidebox \@my@Hideboxtrue
\newbox\@my@Hidebox
\def\myHidebox{\setbox\@my@Hidebox\vbox\bgroup}
\def\endmyHidebox{\egroup
   \if@my@Hidebox
      \unvbox\@my@Hidebox\par
   \else\par
   \fi
}
\def\showmyHidebox{\@my@Hideboxtrue}
\def\hidemyHidebox{\@my@Hideboxfalse}
\def\showmyproof{\@my@Hideboxtrue}
\def\hidemyproof{\@my@Hideboxfalse}

\makeatother

\hidemyHidebox  
\showmyHidebox  

\def\mynewpage{\vskip 20mm}
\let\mynewpage\newpage
\let\mynewpage\relax

\let\le\leqslant
\let\ge\geqslant

\def\R{\mathbb R}

\def\Z{\mathbb Z}

\def\sb{sb}         
\let\splitBr\otimes
\def\PG{\operatorname{PG}}

\begin{document}

\title{Petal grid diagrams of torus knots}

\author{Eon-Kyung Lee and Sang-Jin Lee}
\address{Department of Mathematics and Statistics, Sejong University, Seoul, Korea}
\email{eonkyung@sejong.ac.kr}

\address{Department of Mathematics, Konkuk University, Seoul, Korea}
\email{sangjin@konkuk.ac.kr}
\date{\today}

\begin{abstract}
A petal diagram of a knot is a projection
with a single multi-crossing such that there are no nested loops.
The petal number $p(K)$ of a knot $K$ is the minimum number of loops
among all petal diagrams of $K$.

Let $T_{n,s}$ denote the $(n,s)$-torus knot
for relatively prime integers $2\le n<s$.
Recently, Kim, No and Yoo proved 
that $p(T_{n,s})\le 2s-2\left\lfloor \frac sn\right\rfloor+1$
whenever $s\equiv \pm 1\bmod n$.
They conjectured that the inequality holds without the assumption $s\equiv \pm 1\bmod n$.
They also showed that $p(T_{n,s})=2s-1$ whenever $2\le n<s<2n$
and $n\equiv 1\bmod s-n$.
Their proofs construct petal grid diagrams for those torus knots.

In this paper, we prove the conjecture that
$p(T_{n,s})\le 2s-2\left\lfloor \frac sn\right\rfloor+1$ holds for any $2\le n<s$.
We also show that $p(T_{n,s})=2s-1$ holds for any $2\le n<s<2n$.
Our proofs construct petal grid diagrams for any torus knots.

\medskip\noindent
{\em Keywords\/}: petal number, petal grid diagram, torus knot, braid group  \\
{\em MSC2020\/}: 57K10, 20F36
\end{abstract}

\maketitle

\section{Introduction}

A knot in $\R^3$ is usually represented by a knot diagram,
which is a picture of a projection of a knot onto a plane
such that only double points are allowed and
the double points must be genuine crossings
which transverse in the plane.
In~\cite{Ada12,Ada14}, Adams introduced the notion of $n$-crossings.
An \emph{$n$-crossing} is a point in the projection of a knot
where $n$ strands cross so that the crossing bisects each strand.
See Figure~\ref{F:ex1}(a).
We call an $n$-crossing simply a \emph{multi-crossing}
when we don't need to specify the number of strands.

A \emph{petal diagram} of a knot $K$ is a projection
of $K$ with a single multi-crossing such that there are no nested loops.
See Figure~\ref{F:ex1}(b).
Every knot admits a petal diagram~\cite{ACD+15}.
The \emph{petal number}, denoted $p(K)$, is the minimum number of loops
among all petal diagrams of $K$, or equivalently, the minimum number of
strands passing through the single multi-crossing.

Suppose we are given a petal diagram with $p$ loops.
We label the strands with $1,\ldots,p$ according to the vertical positions of the strands.
From one end of the strands we read the labels counter clockwise half way
around the crossing.
This sequence is called a \emph{petal permutation} of the projection.
The petal diagram in Figure~\ref{F:ex1}(b)
has a petal permutation $\pi=(3,5,2,4,1)$.

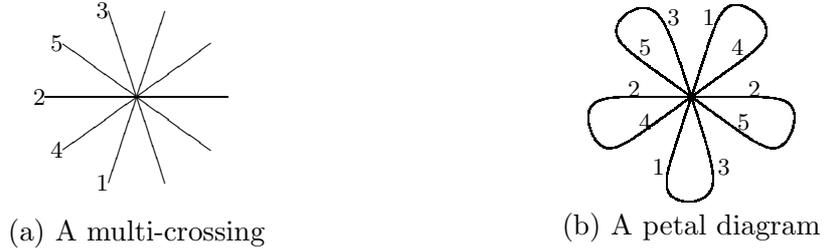
\begin{figure}[ht]\footnotesize
$$\begin{xy}/r1.2mm/:
(0,0)="or",
(3.09, 0)="h1",
(0, 9.51)="v1",
(8.09, 0)="h2",
(0, 5.88)="v2",
(0,0)-"h1"+"v1"="a1",
(0,0)-"h2"+"v2"="a2",
        (-10,0)="a3",
(0,0)-"h2"-"v2"="a4",
(0,0)-"h1"-"v1"="a5",
"a1"="a", "or"-"a"; "or"+"a" **@{-} *!R{3},
"a2"="a", "or"-"a"; "or"+"a" **@{-} *!R{5},
"a3"="a", "or"-"a"; "or"+"a" **@{-} *!R{2},
"a4"="a", "or"-"a"; "or"+"a" **@{-} *!R{4},
"a5"="a", "or"-"a"; "or"+"a" **@{-} *!R{1},
"or"+(0,-15) *{\normalsize\mbox{(a) A multi-crossing}},
\end{xy}
\qquad\qquad\qquad\qquad\qquad\qquad
\begin{xy}/r.75mm/:
(0,0)="or",
(3.09, 0)="h1",
(0, 9.51)="v1",
(8.09, 0)="h2",
(0, 5.88)="v2",
(0,0)-"h1"+"v1"="a1",
(0,0)-"h2"+"v2"="a2",
        (-10,0)="a3",
(0,0)-"h2"-"v2"="a4",
(0,0)-"h1"-"v1"="a5",
"or"; "or" **\crv{ 
"or"+"a1" & "or"+"a1"+"a1" &  "or"+"a2"+"a2" & "or"+"a2" &
"or"-"a2" & "or"-"a2"-"a2" & "or"-"a3"-"a3" & "or"-"a3" &
"or"+"a3" & "or"+"a3"+"a3" & "or"+"a4"+"a4" & "or"+"a4" &
"or"-"a4" & "or"-"a4"-"a4" & "or"-"a5"-"a5" & "or"-"a5" &
"or"+"a5" & "or"+"a5"+"a5" & "or"-"a1"-"a1" & "or"-"a1" },
"a1" *++!D{3},
"a2" *+!D{5},
"a3" *!D{2},
"a4" *!D{4},
"a5" *+!UR{1},
"or"-"a1" *+!UL{3},
"or"-"a2" *!DL{5},
"or"-"a3" *!DL{2},
"or"-"a4" *+!D{4},
"or"-"a5" *++!D{1},
"or"+(0,-23) *{\normalsize \mbox{(b) A petal diagram}},
\end{xy}
$$
\normalsize
\caption{A multi-crossing and a petal diagram}
\label{F:ex1}
\end{figure}

Notice that if $K$ is a nontrivial knot represented by a petal diagram with $p$ loops,
then $p$ is an odd integer.
(If $p$ is even, then $K$ is a link with $p/2$ components.
If $p=2$, then $K$ is the unknot.)
Since this paper is interested in torus knots,
we assume that $p$ is an odd integer in the sequel.

\subsection{Petal number of torus knots}
Let $T_{n,s}$ denote the torus knot of type $(n,s)$
for relatively prime integers $n$ and $s$ with $2\le n<s$.
There have been several works for the petal number of torus knots.

In~\cite[Theorem 4.15]{ACD+15}, Adams et al.\ showed that if $s\equiv \pm 1\bmod n$, then
$$p(T_{n,s})\le
\left\{\begin{array}{ll}
2s-1 &\quad\mbox{if $s\equiv 1\bmod n$},\\
2s+3 &\quad\mbox{if $s\equiv -1\bmod n$}.
\end{array}\right.$$

It was much improved by Kim, No and Yoo as follows.

\begin{theorem}[{\cite[Theorem 3]{KNY22}}]\label{thm:KNY-B}
Let $n$ and $s$ be relatively prime integers with $2\le n<s$.
If $s\equiv \pm 1\bmod n$, then
$$
p(T_{n,s})\le 2s-2\left\lfloor\frac sn\right\rfloor+1.
\leqno (\ast)
$$
\end{theorem}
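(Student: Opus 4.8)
To prove $(\ast)$, the plan is to construct, for every pair $(n,s)$ with $2\le n<s$ and $s\equiv\pm1\bmod n$, an explicit petal grid diagram of $T_{n,s}$ of grid number $2s-2\lfloor s/n\rfloor+1$. Here a \emph{petal grid diagram} is a grid diagram of a special ``double-staircase'' shape that directly displays a petal projection of the knot it represents, its grid number being the number of loops of that projection; thus a petal grid diagram of grid number $N$ yields a petal diagram of $T_{n,s}$ with $N$ loops, which is exactly $(\ast)$. The starting point is the braid presentation: $T_{n,s}$ is the closure of $\delta^{s}$, where $\delta=\sigma_1\sigma_2\cdots\sigma_{n-1}\in B_n$ and $\Delta^2=\delta^{n}$ denotes the (central) full twist. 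Writing $s=qn+1$ in the case $s\equiv1$ and $s=qn-1$ in the case $s\equiv-1$, we have $\delta^{s}=\Delta^{2q}\delta$ and $\delta^{s}=\Delta^{2q}\delta^{-1}=\Delta^{2(q-1)}\delta^{n-1}$ respectively (for the second case we keep the positive form $\Delta^{2(q-1)}\delta^{n-1}$); in each case the braid is a string of full twists followed by one short positive factor, and the number of full twists is $\lfloor s/n\rfloor$, namely $q$ resp.\ $q-1$. Accordingly the target $2s-2\lfloor s/n\rfloor+1$ equals $2q(n-1)+3$ resp.\ $2q(n-1)+1$: a construction that spends exactly $2(n-1)$ loops per full twist should land on it.

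First I would settle the base cases $T_{n,n+1}$ and $T_{n,2n-1}$ — torus knots with $n<s<2n$ — by writing down petal grid diagrams of grid number $2n+1$ and $4n-3$ directly from the grid diagram of the closed braid together with a short sequence of grid moves (commutations and (de)stabilizations). For the inductive step, starting from a petal grid diagram of $T_{n,s}$ of grid number $2s-2\lfloor s/n\rfloor+1$, I would produce one of $T_{n,s+n}$, the closure of $\Delta^2\delta^{s}$: since $\lfloor(s+n)/n\rfloor=\lfloor s/n\rfloor+1$, the required grid number is $(2s-2\lfloor s/n\rfloor+1)+2(n-1)$, so the task is to show that absorbing one further full twist $\Delta^2$ costs exactly $2(n-1)$ new rows and columns and preserves petal-grid form. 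Inducting on $q$ then gives the statement. A convenient way to organize the bookkeeping is to put grids aside for a moment and describe the petal permutation $\pi\in S_N$ directly by a closed formula — a concatenation of blocks of interleaved increasing and decreasing runs, one block per full twist — verify through the translation between braid words and petal permutations that $\pi$ represents $T_{n,s}$, and then read off $N=2s-2\lfloor s/n\rfloor+1$.

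The step I expect to be the main obstacle is precisely this inductive one: checking that incorporating a full twist keeps the diagram a genuine petal grid diagram — the ``no nested loops'' condition is rigid and is destroyed by careless local modifications — while enlarging the grid number by only $2(n-1)$ rather than by about $2n$. The saving of $2$ loops per full twist is ultimately where the improvement over the earlier bound of Adams et al.\ comes from, so it cannot be had by a soft argument; it needs an explicit picture of how a single full twist reroutes the loops of the petal projection. Once the base cases and this single-twist modification are pinned down, however, the general case should reduce to a finite, block-local verification that can be displayed on one schematic grid diagram.
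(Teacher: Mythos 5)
Your plan has the right arithmetic and the right overall shape (closed braid $\delta^{s}=\Delta^{2q}\delta$ or $\Delta^{2(q-1)}\delta^{n-1}$, a base petal grid diagram, then absorbing one full twist per step at a cost of $2(n-1)$ loops), and this is in fact the skeleton of the paper's argument. But as written it has two genuine gaps, and they are exactly where the content of the theorem lives. First, the inductive step is not available in the form you state it: you cannot start from an \emph{arbitrary} petal grid diagram of $T_{n,s}$ of the right size and "absorb $\Delta^2$ for $2(n-1)$ more rows and columns"; the diagram must be kept in a closed-braid-like normal form throughout, and the induction hypothesis must carry that structure. The paper makes this precise with (strongly) braided petal permutations, the braid insertion $\PG(\pi;\alpha)$, and the stabilization move $s_k$, proving that $K(\pi;\alpha U_k)$ is isotopic to $K(s_k(\pi);\alpha)$ at a cost of exactly $2$ loops and that $s_k$ preserves the strongly braided form (Proposition~\ref{T:br}, Lemma~\ref{L:stBr}); a full twist $\Delta^2=U_2U_3\cdots U_n$ is then $n-1$ such stabilizations, which is the $2(n-1)$ you need. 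You explicitly flag this step as "the main obstacle" and then leave it unproved, so the proposal asserts precisely the statement that has to be established rather than giving a mechanism for it.

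Second, your base case $T_{n,2n-1}$ with $4n-3$ loops is not a routine matter of "a short sequence of grid moves": it already contains the whole difficulty of the $s\equiv-1\bmod n$ case (this is exactly where Adams et al.\ only achieved $2s+3$). In the paper's framework it amounts to the nontrivial conjugacy $\delta^{n-1}\sim\delta\,U_2U_3\cdots U_{n-1}$ (the $k=n-1$ instance of Theorem~\ref{T:Br}, proved there via permutation-braid decompositions and a pure-braid cancellation argument), followed by $n-2$ stabilizations; without an explicit construction or an equivalent braid identity, the claimed $4n-3$ diagram is unsupported. So the proposal correctly identifies the target decomposition and the per-twist cost, but the two ingredients it defers — the twist-absorption move that preserves petal-grid form and the $s\equiv-1$ base case — are the theorem's actual substance and are missing.
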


They conjectured that
the condition ``$s\equiv \pm 1\bmod n$'' in the above theorem is not necessary.

\begin{conjecture}[\cite{KNY22}]\label{conj:KNY}
The inequality $(*)$ holds for any relatively prime integers $n$ and $s$ with $2\le n<s$.
\end{conjecture}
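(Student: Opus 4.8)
The plan is to prove $(\ast)$ by constructing, for every pair of coprime integers $2\le n<s$, an explicit petal grid diagram of $T_{n,s}$ whose grid number equals $2s-2\left\lfloor\frac sn\right\rfloor+1$. Since a petal grid diagram of grid number $p$ presents its knot by a petal diagram with $p$ loops, the bound $(\ast)$ follows at once from the definition of $p(T_{n,s})$ as soon as such a diagram is exhibited. Write $s=qn+r$ with $q=\left\lfloor\frac sn\right\rfloor\ge 1$ and $1\le r\le n-1$; then the grid number to be achieved is $N:=2\bigl(q(n-1)+r\bigr)+1$.

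I would organise the construction around the factorisation $\delta^s=(\delta^n)^q\,\delta^r$ of the standard torus braid $\delta=\sigma_1\sigma_2\cdots\sigma_{n-1}\in B_n$, in which $\delta^n$ is a full twist. First I would build a ``core'' petal grid diagram representing $T_{n,n+r}$ --- thereby handling the range $n<s<2n$ --- with exactly $2(n-1)+2r+1=2s-1$ loops, described by an explicit placement of the $X$- and $O$-markings depending only on $n$ and $r$. Next I would describe a local modification of a petal grid diagram that splices in a fixed block of $2(n-1)$ new rows and columns carrying a prescribed staircase of markings, so that: (i) the result is again a petal grid diagram; (ii) its grid number has grown by exactly $2(n-1)$; and (iii) its underlying knot type changes by the insertion of one more full twist into the defining torus braid, i.e.\ from $T_{n,m}$ to $T_{n,m+n}$. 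Applying this modification $q-1$ times to the core diagram then produces a petal grid diagram of $T_{n,s}$ with grid number $N$. When $r=1$ or $r=n-1$ (that is, $s\equiv\pm1\bmod n$) these diagrams should specialise to the ones of Kim, No and Yoo, so the construction genuinely extends Theorem~\ref{thm:KNY-B}.

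It remains to certify that each diagram so produced really represents $T_{n,s}$, and not some other knot. For this I would read off the braid word naturally associated with the petal grid diagram and simplify it by Markov moves (in particular destabilisations) and conjugation until it is recognisably a torus braid; the band-generator (Birman--Ko--Lee) calculus is well suited both to the cyclic word $\delta^s$ and to identifying a periodic braid up to conjugacy, which is exactly what one needs here. The main obstacle I anticipate is carrying out this verification uniformly in the residue $r=s\bmod n$: the values $r=\pm1$ were the ones within reach of earlier arguments precisely because the bookkeeping degenerates there, and the new content is a clean, $r$-independent description of the ``$\delta^r$'' part of the diagram together with a clean analysis of the full-twist splicing move. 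A secondary point requiring care is to show that the marking pattern actually attains grid number $N$ and not merely $\le N$; only the inequality is needed for $(\ast)$, but the sharper statement also feeds the companion result $p(T_{n,s})=2s-1$ for $n<s<2n$.

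Before committing to the general argument I would test the construction and the attendant braid computation on the smallest instances --- all the $T_{2,s}$, where the petal permutation should reduce to an arithmetic progression modulo $s+2$, and $T_{3,4},\,T_{3,5},\,T_{3,7},\,T_{3,8}$ --- against the known values of the petal number, and use them to pin down the precise form of the full-twist gadget.
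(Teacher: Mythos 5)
Your proposal is a strategy outline rather than a proof, and the piece it leaves open is exactly the new mathematical content of this statement. You correctly reduce the problem to two ingredients: a ``core'' petal grid diagram of $T_{n,n+r}$ with $2(n+r)-1$ loops for \emph{every} residue $1\le r\le n-1$ coprime to $n$, and a full-twist splicing gadget costing $2(n-1)$ extra loops per full twist. But you give no construction of the core diagram beyond saying it should exist, and you yourself identify the uniform treatment of the ``$\delta^r$ part'' as the main obstacle --- which is precisely why earlier work only reached $r\equiv\pm1\bmod n$. Deferring the identification of the knot type to ``read off the braid word and simplify by Markov moves'' does not close this gap: without an explicit, $r$-independent description of the markings there is nothing to simplify. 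The paper supplies the missing idea in two steps. First, a braid-theoretic theorem (Theorem~\ref{T:Br}) shows that for $\gcd(n,r)=1$ the $n$-braid $\delta^{r}$ is conjugate to $\delta\, U_{a_1}\cdots U_{a_{r-1}}$ with $a_i=\lceil ni/r\rceil$, where $U_k=(\sigma_{k-1}\cdots\sigma_1)(\sigma_1\cdots\sigma_{k-1})$ are commuting interior full twists; the proof is a careful permutation-braid decomposition argument (Lemmas~\ref{L:Pdec}, \ref{L:TA}, \ref{L:spX}) combined with a pure-braid rigidity lemma (Lemma~\ref{L:perm}), none of which appears in your plan. Second, a stabilization move on strongly braided petal permutations (Proposition~\ref{T:br}, Theorem~\ref{C:br}) shows that each factor $U_k$ can be absorbed into the diagram at the cost of exactly two loops; this is the precise form of your ``splicing gadget'', including your full-twist insertion, since $\Delta^2=U_2\cdots U_n$ is a product of $n-1$ such factors, giving $2(n-1)$ loops per full twist. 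With these two results the count $(n-1)(m-1)+(k-1)=s-n-m$ immediately yields $p(T_{n,s})\le 2s-2\lfloor s/n\rfloor+1$.

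So while your global bookkeeping ($s=qn+r$, base case in the range $n<s<2n$, plus $q-1$ full twists, total $2s-2\lfloor s/n\rfloor+1$) matches the paper's, the proposal contains a genuine gap: the explicit conjugation of $\delta^r$ into $\delta$ times commuting $U$-factors, and the proof that each such factor costs only two petals, are asserted as hopes rather than established, and they cannot be recovered by small-case experimentation alone. A minor additional caveat: your remark that the sharper equality for $n<s<2n$ would follow from your diagrams ``attaining'' grid number $N$ is misplaced --- the lower bound $p(T_{n,s})\ge 2s-1$ comes from the superbridge index (Kuiper's theorem via Kim--No--Yoo), not from any property of the constructed diagram.
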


Using super bridge index,
they determined the petal number $p(T_{n,s})$ for some $n$ and $s$.

\begin{theorem}[{\cite[Theorem 1]{KNY22}}]\label{thm:KNY-A}
Let $n$ and $s$ be relatively prime integers with $2\le n<s$.
If $r\equiv 1\bmod s-n$, then
$$p(T_{n,s})=2s-1.$$
\end{theorem}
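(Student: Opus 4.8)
The plan is to prove the equality by separately establishing the inequalities $p(T_{n,s})\ge 2s-1$ and $p(T_{n,s})\le 2s-1$. A remark used throughout: the hypothesis $n\equiv 1\bmod s-n$ says that $s-n$ divides $n-1$, and since $s-n\ge 1$ this forces $s-n\le n-1$, i.e.\ $n<s\le 2n-1<2n$; so the congruence automatically puts us in the range $n<s<2n$. For the lower bound I would argue through the super bridge index $\sb[\,\cdot\,]$, following \cite{KNY22}. On one hand, a petal diagram of a knot $K$ with $p$ loops can be realized by an embedding of $K$ whose super bridge number is at most $(p+1)/2$ (perturb the petal projection and tilt its height function), so $p(K)\ge 2\,\sb[K]-1$ for every knot $K$. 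On the other hand, by Kuiper's computation of the super bridge index of torus knots, $\sb[T_{n,s}]=\min\{2n,s\}$, which equals $s$ in the range just noted. Hence $p(T_{n,s})\ge 2\,\sb[T_{n,s}]-1=2s-1$.

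For the upper bound I would exhibit a petal diagram --- concretely, a petal grid diagram --- of $T_{n,s}$ with exactly $2s-1$ loops. When $s\equiv\pm1\bmod n$ this is already contained in Theorem~\ref{thm:KNY-B}: since $1\le s/n<2$ we have $\lfloor s/n\rfloor=1$, so its conclusion reads $p(T_{n,s})\le 2s-1$. For the remaining values the divisibility $(s-n)\mid(n-1)$ is the feature to exploit: writing $n-1=k(s-n)$ and choosing a braid word for $T_{n,s}$ adapted to this factorization, one runs an inductive construction of petal grid diagrams in which the divisibility makes the recursion terminate cleanly and forces the loop count to come out to exactly $2s-1$ rather than a larger odd number. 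Alternatively, granting Conjecture~\ref{conj:KNY} --- the main result of this paper --- the estimate $p(T_{n,s})\le 2s-2\lfloor s/n\rfloor+1=2s-1$ is immediate, so that Theorem~\ref{thm:KNY-A}, and indeed its sharpening to all $n<s<2n$, follows as a corollary of that result together with the super bridge lower bound above; this is the form in which I would ultimately record it.

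The main obstacle is the upper bound, and within it the petal grid diagram construction in the cases with $s\not\equiv\pm1\bmod n$ --- the smallest being $T_{3,5}$ and $T_{5,7}$ --- which lie outside the reach of Theorem~\ref{thm:KNY-B} and of the earlier estimate of Adams et al. The delicate point is to make the construction use exactly $2s-1$ loops and not more; the congruence $n\equiv 1\bmod s-n$ is precisely what makes the bookkeeping close, and the contribution of the present paper is to remove this congruence by a more flexible construction that works for every torus knot with $n<s<2n$.
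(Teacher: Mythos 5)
Your final form of the argument—lower bound $p(T_{n,s})\ge 2s-1$ from $p(K)\ge 2\operatorname{sb}(K)-1$ together with Kuiper's $\operatorname{sb}(T_{n,s})=\min\{2n,s\}=s$ (valid since $n\equiv 1\bmod s-n$ forces $n<s<2n$), and upper bound $p(T_{n,s})\le 2s-1$ from the paper's Theorem A with $\lfloor s/n\rfloor=1$—is exactly how the paper handles this statement, namely as a special case of Corollary~\ref{T:2main} proved via Proposition~\ref{pro:sb} and Corollary~\ref{T:1main}. Your intermediate sketch of a bespoke construction exploiting $(s-n)\mid(n-1)$ is left vague, but since you ultimately record the proof through Theorem A (which the paper establishes independently, so no circularity), the proposal is correct and coincides with the paper's route.
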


The above theorem was proved by Adams et al.~\cite[Corollary 4.16]{ACD+15}
for the case where $s=n+1$
and by Lee and Jin~\cite[Theorem 1.2]{LJ21} for the case where $s=n+2$.

\begin{remark}
The condition ``$n\equiv 1\bmod s-n$''
implies that $n=am+1$ and $s=(a+1)m+1$ for some $a,m\ge 1$,
and hence that $s<2n$.
Therefore Theorem~\ref{thm:KNY-A} concerns the case where $2\le n<s<2n$.
\end{remark}

\subsection{Our results}
In this paper, we prove Conjecture~\ref{conj:KNY}.

\medskip\noindent
\rm\textbf{Theorem A.} (Corollary~\ref{T:1main})\em  \ \
Let $n$ and $s$ be relatively prime integers with $2\le n<s$.
Then
$$p(T_{n,s})\le 2s-2\left\lfloor\frac sn\right\rfloor +1.$$
\normalfont
\smallskip

We also show that Theorem~\ref{thm:KNY-A} holds for all
$n$ and $s$ with $2\le n<s<2n$, without the
assumption ``$n\equiv 1\bmod s-n$''.

\medskip\noindent
\textbf{Theorem B.} (Corollary~\ref{T:2main})\em  \ \
Let $n$ and $s$ be relatively prime integers with $2\le n<s<2n$.
Then
$$p(T_{n,s})=2s-1.$$
\normalfont

\begin{remark}
After finishing the present paper, the authors found the preprint~\cite{Nie23} of Zipei Nie  
released a few days ago, where he also obtained Theorems A and B by using a different technique.
He considered \emph{star diagrams} of  braids which is an analogue of petal diagrams.

Our work was done independently by using \emph{petal grid diagrams},
and our result is slightly general than his.
He considered only torus knots.
Our technique works for a larger class of knots
(see Corollary~\ref{T:11} for an example).
\end{remark}

\subsection{Notations}
For $x\in\R$,
$\lceil x\rceil=\min\{n\in\Z: n\ge x\}$ and
$\lfloor x\rfloor=\max\{n\in\Z: n\le x\}$.

The permutation $\pi$ on $\{1,\ldots,n\}$
sending $i$ to $a_i$ is denoted by $\pi=(a_1,\ldots,a_n)$,
and called an \emph{$n$-permutation}.
Our convention is that permutations act from the left,
hence $(\pi_1\circ\pi_2)(i)=\pi_1(\pi_2(i))$
for $n$-permutations $\pi_1$ and $\pi_2$.
For a $(k+1)$-permutation $\pi_1=(a_1,\ldots, a_{k+1})$
and a $k$-permutation $\pi_2=(b_1,\ldots,b_{k})$,
$\pi_1\odot\pi_2$ denotes the $(2k+1)$-permutation defined by
$$\pi_1\odot\pi_2=(a_1,b_1,a_2,b_2,\ldots,a_k,b_k,a_{k+1}).$$
For example, $(1,2,3,4)\odot(5,6,7)=(1,5,2,6,3,7,4)$.

\mynewpage
\section{Petal grid diagrams}

A \emph{grid diagram} is a polygonal knot diagram in $\R^2$ consisting of
finitely many horizontal edges and the same number of vertical
edges such that vertical edges always cross over horizontal edges.
It is required that the end points of these edges, called \emph{nodes},
are points in $\{1,\ldots,p\}\times\{1,\ldots,p\}$,
where $p$ is the number of horizontal edges,
and that each line of the form $\R\times \{k\}$ or $\{k\}\times\R$
for $k\in\{1,\ldots,p\}$ contains exactly two nodes.
See Figure~\ref{F:ex2}(e).

It is known that every knot admits a grid diagram.
We use the convention that
the $x$- (resp.\ $y$-) coordinate increases
from 1 to $p$ as we move rightward (resp.\ upward).

\begin{definition}[petal grid diagram]
A \emph{petal grid diagram} of a knot is a grid diagram with $p=2n+1$
horizontal edges for some $n\ge 1$ such that the following hold
(see Figure~\ref{F:ex2}(e)).
\begin{itemize}
\item[(i)] There is exactly one vertical edge, called the \emph{inflection edge},
whose adjacent horizontal edges point opposite directions
(one points to the left and the other points to the right).
The adjacent horizontal edges have length $n$.
\item[(ii)] Every vertical edge except the inflection edge has
one adjacent horizontal edge with length $n$ and the other with length $n+1$.
\end{itemize}
\end{definition}

The following lemma is well known to experts, and it
was often used in literature~\cite{ACD+15,LJ21,KNY22}.

\begin{lemma}\label{l:gr}
A knot has a petal diagram with $p$ loops if and only if
it has a petal grid diagram with $p$ horizontal edges.
\end{lemma}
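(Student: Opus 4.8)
The plan is to establish the equivalence by exhibiting an explicit, reversible correspondence between petal diagrams with $p$ loops and petal grid diagrams with $p$ horizontal edges, reading off the petal permutation in both cases and checking the two data structures carry the same information. First I would recall that a petal diagram with $p$ loops is determined up to isotopy by its petal permutation $\pi$, an odd $p$-permutation read counterclockwise halfway around the multi-crossing; conversely any such $\pi$ gives a petal diagram. So it suffices to associate to each petal grid diagram with $p=2n+1$ horizontal edges a petal permutation, and to show every petal permutation arises this way.

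The main construction goes as follows. Given a petal grid diagram, label the $p$ horizontal edges $1,\dots,p$ by their $y$-coordinates (bottom to top). Starting from the inflection edge and traversing the knot, record the sequence of horizontal-edge labels encountered; because the grid is a single closed curve this produces a cyclic sequence, which we normalize to a $p$-permutation $\pi$. The two defining conditions of a petal grid diagram are exactly what is needed to fold it into a petal diagram: condition (i) says there is a unique place where the curve ``turns around'' (the inflection edge with its two length-$n$ horizontals), which will become the outermost loop, and condition (ii) says every other vertical edge sits between a length-$n$ and a length-$(n+1)$ horizontal edge, so that the $x$-coordinates of the vertical edges are forced to alternate around a central column in a fixed pattern. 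Concretely, I would argue that up to the obvious normalization the $x$-coordinates of the vertical edges are determined by the edge-length conditions, so a petal grid diagram is in bijective correspondence with the ordering of its horizontal edges, i.e.\ with a $p$-permutation. Collapsing the central column of the grid to a point then literally produces the multi-crossing, and the horizontal edges become the $p$ loops with no nesting because the length-$n$/length-$(n+1)$ alternation prevents any loop from being contained in another. Reading the strands at the crossing by vertical position recovers $\pi$, so the petal permutations match.

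For the converse, given a petal diagram with permutation $\pi$, I would reverse this: place the $p$ loops as horizontal edges at heights $1,\dots,p$, route the connecting arcs as vertical edges through a central band, and check that the resulting polygonal diagram can be straightened into a grid diagram satisfying (i) and (ii), with the outermost loop of the petal diagram yielding the inflection edge. Both directions are essentially the same picture read in opposite ways, so this amounts to drawing the correspondence carefully once and noting it is invertible.

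The step I expect to be the main obstacle is the bookkeeping in condition (ii): one must verify that the alternating length-$n$/length-$(n+1)$ pattern on the horizontal edges is both necessary (forced by the node-placement rules, given the inflection edge) and sufficient (guarantees a genuine petal diagram with no nested loops), and in particular that the $x$-coordinates of all vertical edges are pinned down once the inflection edge's position is fixed. This is a finite combinatorial check about how two nodes per row and per column can be arranged under the length constraints, but it is where the real content of the lemma lies; everything else is routine once the correspondence is drawn. Since the paper itself calls this lemma well known and standard, I would present the bijection via a single clearly labeled figure and relegate the verification of (i)--(ii) to a short paragraph.
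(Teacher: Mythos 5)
Your proposal is correct and follows essentially the same route as the paper: the lemma is established through the explicit correspondence between petal permutations and petal grid diagrams (the paper's construction $\PG(\pi)$ with nodes $A_i=(\lceil i/2\rceil,a_i)$), the paper writing out the petal-to-grid direction in coordinates and treating the converse as standard. Your additional observation that conditions (i)--(ii) rigidify any petal grid diagram (the inflection edge is forced into the middle column and every other vertical edge must reach columns at distances $n$ and $n+1$ on a single side, so only the heights of the horizontal edges, i.e.\ a $p$-permutation, remain free) is precisely the finite check that makes the grid-to-petal direction work, so the sketch is sound.
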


Let us explain how to obtain a petal grid diagram from a petal permutation.
See Figure~\ref{F:ex2}.

Let $p=2n+1$.
Suppose that we are given a petal diagram of a knot $K$
with petal permutation
$$\pi=(a_1,\ldots,a_{p}).$$
For notational convenience,
we also use the integers $a_{p+1},\ldots, a_{2p}$
defined by $a_{p+i}=a_i$ for $1\le i\le p$.

Let $B_1,\ldots,B_{2p}$ be the points in $[-1,1]\times[1,p]\subset\R^2$
defined by
$$
B_i=(-1,p+1-i),\qquad B_{p+i}=(1,i)
$$
for $1\le i\le p$.
We may assume that the petal diagram
consists of the line segments $B_iB_{p+i}$
and $B_{2i-1}B_{2i}$ for $i=1,\ldots,p$.
See Figure~\ref{F:ex2}(a).
We assume that the knot $K$ is oriented so that one travels
from $B_{2i-1}$ to $B_{2i}$ along the strand.

Let $B_1',\ldots,B_{2p}'$ be the points in the trapezoid
$\{(x,y): 1\le y\le p,~-y\le x\le y\}\subset\R^2$
defined by
$$
B_i'=(-p-1+i,p+1-i),\qquad B_{p+i}'=(i,i)
$$
for $1\le i\le p$.
See Figure~\ref{F:ex2}(b).
Let $\mathcal D'$ be the union of the line segments $B_i'B_{p+i}'$
and $B_{2i-1}'B_{2i}'$  for $i=1,\ldots,p$.
Then $\mathcal D'$ is a knot diagram of $K$.

Let $B_1'',\ldots,B_{2p}''$ be the points in
$\{(x,y): 1\le y\le p,~-y\le x\le y\}\times[1,p]\subset\R^3$
defined by
$$
B_i''=(-p-1+i,p+1-i,a_i),\qquad B_{p+i}''=(i,i,a_i)
$$
for $1\le i\le p$.
Let $\mathcal D''$ be the union of the line segments $B_i''B_{p+i}''$
and $B_{2i-1}''B_{2i}''$  for $1\le i\le p$.
See Figure~\ref{F:ex2}(c).
Then $\mathcal D''$ is isotopic to the knot $K$,
and the projection of $\mathcal D''$ onto the $xy$-plane
is the knot diagram $\mathcal D'$.


Let $B_1''',\ldots,B_{2p}'''$ be the points in $[-p,p]\times[1,p]\subset\R^2$
defined by
$$B_i'''=(-p-1+i,a_i),\qquad B_{i+p}'''=(i,a_i)$$
for $1\le i\le p$.
Let $\mathcal D'''$ be the union of the line segments $B_i'''B_{p+i}'''$
and $B_{2i-1}'''B_{2i}'''$  for $1\le i\le p$.
See Figure~\ref{F:ex2}(d).
Then $\mathcal D'''$ is the projection of
$\mathcal D''$ onto the $xz$-plane, hence it is a knot diagram of $K$.

Lastly, let $A_1,\ldots,A_{2p}$ be the points
in $[1,p]\times[1,p]\subset\R^2$ defined by
$$
A_i=\left( \left\lceil\textstyle \frac i2\right\rceil, a_i\right)
$$
for $1\le i\le 2p$.
Let $\mathcal D$ be the union of the line segments $A_iA_{p+i}$
and $A_{2i-1}A_{2i}$  for $1\le i\le p$.
See Figure~\ref{F:ex2}(e).
Then $\mathcal D$ is a petal grid diagram
of the knot $K$.

Consequently, we have the following lemma.

\begin{figure}\footnotesize
$$
\begin{array}{ccc}
\begin{xy}/r.5mm/:
(0,0)="or", (20,0)="hd", (0,15)="vd",
"or"-"hd"+"vd"+"vd"="x1"="aa",
"aa"-"vd"="aa"="x2" ,
"aa"-"vd"="aa"="x3",
"aa"-"vd"="aa"="x4",
"aa"-"vd"="aa"="x5",
"x5"+"hd"+"hd"="y1"="aa",
"aa"+"vd"="aa"="y2",
"aa"+"vd"="aa"="y3",
"aa"+"vd"="aa"="y4",
"aa"+"vd"="aa"="y5",
"x1" *+!R{B_1},
"x2" *+!R{B_2},
"x3" *+!R{B_3},
"x4" *+!R{B_4},
"x5" *+!R{B_5},
"y1" *+!L{B_6},
"y2" *+!L{B_7},
"y3" *+!L{B_8},
"y4" *+!L{B_9},
"y5" *+!L{B_{10}},
"x1"; "x2" **@{-} ?(.6) *@{>},
"x3"; "x4" **@{-} ?(.6) *@{>},
"x5"; "y1" **@{-} ?(.6) *@{>},
"y2"; "y3" **@{-} ?(.6) *@{>},
"y4"; "y5" **@{-} ?(.6) *@{>},
"x1"; "y1" **@{-} ?(.15) *+!D{3} ?(.85) *+!D{3},
"x2"; "y2" **@{-} ?(.15) *+!D{5} ?(.85) *+!D{5} ,
"x3"; "y3" **@{-} ?(.15) *+!D{2} ?(.85) *+!D{2},
"x4"; "y4" **@{-} ?(.15) *+!D{4} ?(.85) *+!D{4},
"x5"; "y5" **@{-} ?(.15) *+!D{1} ?(.85) *+!D{1},
\end{xy}
&\qquad\qquad&
\begin{xy}/r.5mm/:
(0,0)="or", (15,0)="hd", (-5,15)="vdL", (5,15)="vd",
"or"-"hd"+"vd"+"vd"="x1"="aa",
"aa"-"vdL"="aa"="x2" ,
"aa"-"vdL"="aa"="x3",
"aa"-"vdL"="aa"="x4",
"aa"-"vdL"="aa"="x5",
"x5"+"hd"+"hd"="y1"="aa",
"aa"+"vd"="aa"="y2",
"aa"+"vd"="aa"="y3",
"aa"+"vd"="aa"="y4",
"aa"+"vd"="aa"="y5",
"x1" *+!R{B_1'},
"x2" *+!R{B_2'},
"x3" *+!R{B_3'},
"x4" *+!R{B_4'},
"x5" *+!R{B_5'},
"y1" *+!L{B_6'},
"y2" *+!L{B_7'},
"y3" *+!L{B_8'},
"y4" *+!L{B_9'},
"y5" *+!L{B_{10}'},
"x1"; "x2" **@{-} ?(.6) *@{>},
"x3"; "x4" **@{-} ?(.6) *@{>},
"x5"; "y1" **@{-} ?(.6) *@{>},
"y2"; "y3" **@{-} ?(.6) *@{>},
"y4"; "y5" **@{-} ?(.6) *@{>},
"x1"="aa"; "aa"+(17.5,-21)="aa" **@{-} ?(.3) *++!L{3},
    "aa"+(5,-6)="aa"; "aa"+(6.5,-7.8)="aa" **@{-},
    "aa"+(2,-2.4)="aa"; "y1"**@{-},
"x2"; "y2" **@{-} ?(.1) *!UR{5},
"x3"="aa"; "aa"+(13,0)="aa" **@{-} ?(.3) *+!D{2},
    "aa"+(4,0)="aa"; "aa"+(2,0)="aa" **@{-},
    "aa"+(4,0)="aa"; "aa"+(4,0)="aa" **@{-},
    "aa"+(4,0)="aa"; "y3" **@{-},
"x4"="aa"; "aa"+(18.5,11.1)="aa" **@{-} ?(.2) *+!D{4},
    "aa"+(2.5,1.5)="aa"; "y4" **@{-},
"x5"="aa"; "aa"+(13.5,16.2)="aa" **@{-} ?(.3) *++!D{1},
    "aa"+(2.5,3)="aa"; "aa"+(3,3.6)="aa" **@{-},
    "aa"+(2.5,3)="aa"; "aa"+(2.5,3)="aa" **@{-},
    "aa"+(2,2.4)="aa"; "aa"+(2.5,3)="aa" **@{-},
    "aa"+(4,4.8)="aa"; "y5"**@{-},
\end{xy}\\
\mbox{\normalsize  (a) Petal diagram}\rule{0pt}{1.5em}
&&
\mbox{\normalsize  (b) $\mathcal D'$}
\end{array}
$$

$$
\begin{array}{ccccc}
\begin{xy}/r.4mm/:
(0,60)="or", (5,-5)="hd", (5,5)="hu", (0,-15)="vd", (25,0)="LRd",
 "hd"+"hd"="2hd",
"2hd"+"hd"="3hd",
"3hd"+"hd"="4hd",
"4hd"+"hd"="5hd",
 "hu"+"hu"="2hu",
"2hu"+"hu"="3hu",
"3hu"+"hu"="4hu",
"4hu"+"hu"="5hu",
 "vd"+"vd"="2vd",
"2vd"+"vd"="3vd",
"3vd"+"vd"="4vd",
"4vd"+"vd"="5vd",
"or"="L0", "L0"+"hd"+"vd"="L1", "L1"+"4hd"="L2",
"L2"+"LRd"="R2", "R2"+"4hu"="R1", "R2"-"hu"-"vd"="R0",
"L1"; "R1" **@{.},
"L2"; "R2" **@{.},
"L2"+"4vd"; "R2"+"4vd" **@{.},
"L0"+ "hd"+"3vd"="P1" *{\bullet},
"L0"+"2hd"+ "vd"="P2" *{\bullet},
"L0"+"3hd"+"4vd"="P3" *{\bullet},
"L0"+"4hd"+"2vd"="P4" *{\bullet},
"L0"+"5hd"+"5vd"="P5" *{\bullet},
"R0"+ "hu"+"3vd"="Q1" *{\bullet},
"R0"+"2hu"+ "vd"="Q2" *{\bullet},
"R0"+"3hu"+"4vd"="Q3" *{\bullet},
"R0"+"4hu"+"2vd"="Q4" *{\bullet},
"R0"+"5hu"+"5vd"="Q5" *{\bullet},
"P1"; "P2" **@{-} ?(.5) *@{>},
"P3"; "P4" **@{-} ?(.5) *@{>},
"P5"; "Q1" **@{-} ?(.5) *@{>},
"Q2"; "Q3" **@{-} ?(.5) *@{>},
"Q4"; "Q5" **@{-} ?(.5) *@{>},
"L1"="aa"; "aa"+"4hd" **@{.},
"aa"+"vd"="aa"; "aa"+"4hd" **@{.},
"aa"+"vd"="aa"; "aa"+"4hd" **@{.},
"aa"+"vd"="aa"; "aa"+"4hd" **@{.},
"aa"+"vd"="aa"; "aa"+"4hd" **@{.},
"R2"="aa"; "aa"+"4hu" **@{.},
"aa"+"vd"="aa"; "aa"+"4hu" **@{.},
"aa"+"vd"="aa"; "aa"+"4hu" **@{.},
"aa"+"vd"="aa"; "aa"+"4hu" **@{.},
"aa"+"vd"="aa"; "aa"+"4hu" **@{.},
"L1"="aa"; "aa"+"4vd" **@{.},
"aa"+"hd"="aa"; "aa"+"4vd" **@{.},
"aa"+"hd"="aa"; "aa"+"4vd" **@{.},
"aa"+"hd"="aa"; "aa"+"4vd" **@{.},
"aa"+"hd"="aa"; "aa"+"4vd" **@{.},
"R2"="aa"; "aa"+"4vd" **@{.},
"aa"+"hu"="aa"; "aa"+"4vd" **@{.},
"aa"+"hu"="aa"; "aa"+"4vd" **@{.},
"aa"+"hu"="aa"; "aa"+"4vd" **@{.},
"aa"+"hu"="aa"; "aa"+"4vd" **@{.},
"P1" *+!R{B_1''},
"P2" *+!D{B_2''},
"P3" *+!R{B_3''},
"P4" *+!D{B_4''},
"P5" *+!U{B_5''},
"Q1" *+!D{B_6''},
"Q2" *+!D{B_7''},
"Q3" *+!U{B_8''},
"Q4" *+!L{B_9''},
"Q5" *+!L{B_{10}''},
"P1"="aa"; "aa"+(11.7,-5.2)="aa" **@{-}, "aa"+(4.5,-2)="aa"; "Q1" **@{-},
"P2"; "Q2" **@{-},
"P3"="aa"; "aa"+(28,0)="aa" **@{-}, "aa"+(4,0)="aa"; "Q3" **@{-},
"P4"="aa"; "aa"+(33.3,7.4)="aa" **@{-}, "aa"+(4.5,1); "Q4" **@{-},
"P5"; "Q5" **@{-},
\end{xy}
&&
\begin{xy}/r.4mm/:
(0,60)="or", (6,0)="hd", (6,0)="hu", (0,-20)="vd", (12,0)="LRd",
(3,0)="hds",
 "hd"+"hd"="2hd",
"2hd"+"hd"="3hd",
"3hd"+"hd"="4hd",
"4hd"+"hd"="5hd",
 "hu"+"hu"="2hu",
"2hu"+"hu"="3hu",
"3hu"+"hu"="4hu",
"4hu"+"hu"="5hu",
 "vd"+"vd"="2vd",
"2vd"+"vd"="3vd",
"3vd"+"vd"="4vd",
"4vd"+"vd"="5vd",
"or"="L0", "L0"+"hd"+"vd"="L1", "L1"+"4hd"="L2",
"L2"+"LRd"="R2", "R2"+"4hu"="R1", "R2"-"hu"-"vd"="R0",
"L1"; "R1" **@{.},
"L2"; "R2" **@{.},
"L2"+"4vd"; "R2"+"4vd" **@{.},
"L0"+ "hd"+"3vd"="P1" *{\bullet},
"L0"+"2hd"+ "vd"="P2" *{\bullet},
"L0"+"3hd"+"4vd"="P3" *{\bullet},
"L0"+"4hd"+"2vd"="P4" *{\bullet},
"L0"+"5hd"+"5vd"="P5" *{\bullet},
"R0"+ "hu"+"3vd"="Q1" *{\bullet},
"R0"+"2hu"+ "vd"="Q2" *{\bullet},
"R0"+"3hu"+"4vd"="Q3" *{\bullet},
"R0"+"4hu"+"2vd"="Q4" *{\bullet},
"R0"+"5hu"+"5vd"="Q5" *{\bullet},
"P1"; "P2" **@{-} ?(.5) *@{>},
"P3"; "P4" **@{-} ?(.5) *@{>},
"P5"; "Q1" **@{-} ?(.5) *@{>},
"Q2"; "Q3" **@{-} ?(.5) *@{>},
"Q4"; "Q5" **@{-} ?(.5) *@{>},
"L1"="aa"; "aa"+"4hd" **@{.},
"aa"+"vd"="aa"; "aa"+"4hd" **@{.},
"aa"+"vd"="aa"; "aa"+"4hd" **@{.},
"aa"+"vd"="aa"; "aa"+"4hd" **@{.},
"aa"+"vd"="aa"; "aa"+"4hd" **@{.},
"R2"="aa"; "aa"+"4hu" **@{.},
"aa"+"vd"="aa"; "aa"+"4hu" **@{.},
"aa"+"vd"="aa"; "aa"+"4hu" **@{.},
"aa"+"vd"="aa"; "aa"+"4hu" **@{.},
"aa"+"vd"="aa"; "aa"+"4hu" **@{.},
"L1"="aa"; "aa"+"4vd" **@{.},
"aa"+"hd"="aa"; "aa"+"4vd" **@{.},
"aa"+"hd"="aa"; "aa"+"4vd" **@{.},
"aa"+"hd"="aa"; "aa"+"4vd" **@{.},
"aa"+"hd"="aa"; "aa"+"4vd" **@{.},
"R2"="aa"; "aa"+"4vd" **@{.},
"aa"+"hu"="aa"; "aa"+"4vd" **@{.},
"aa"+"hu"="aa"; "aa"+"4vd" **@{.},
"aa"+"hu"="aa"; "aa"+"4vd" **@{.},
"aa"+"hu"="aa"; "aa"+"4vd" **@{.},
"P1" *+!R{B_1'''},
"P2" *+!D{B_2'''},
"P3" *+!R{B_3'''},
"P4" *+!D{B_4'''},
"P5" *+!U{B_5'''},
"Q1" *+!D{B_6'''},
"Q2" *+!D{B_7'''},
"Q3" *+!U{B_8'''},
"Q4" *+!L{B_9'''},
"Q5" *+!U{B_{10}'''},
"P1"; "P1"+"hd"+"hd" **@{-}, "P1"+"hd"+"hd"+"hd"; "Q1" **@{-},
"P2"; "Q2" **@{-},
"P3"; "P3"+"hd"+"hd"+"hds" **@{-}, "P3"+"hd"+"hd"+"hd"+"hds"; "Q3" **@{-},
"P4"; "Q4"-"hd"-"hd"**@{-}, "Q4"-"hd"; "Q4" **@{-},
"P5"; "Q5" **@{-},
\end{xy} 
&&
\begin{xy}/r.75mm/:
(0,30)="or",
"or"+(10,-30)="a1" *{\bullet} *+!R{A_1},
"or"+(10,-10)="b1" *{\bullet} *+!R{A_2},
"or"+(20,-40)="a2" *{\bullet} *+!R{A_3},
"or"+(20,-20)="b2" *{\bullet} *+!R{A_4},
"or"+(30,-50)="a3" *{\bullet} *+!R{A_5},
"or"+(30,-30)="b3" *{\bullet} *+!L{A_6},
"or"+(40,-10)="a4" *{\bullet} *+!L{A_7},
"or"+(40,-40)="b4" *{\bullet} *+!L{A_8},
"or"+(50,-20)="a5" *{\bullet} *+!L{A_9},
"or"+(50,-50)="b5" *{\bullet} *+!L{A_{10}},
"a1"; "b1" **@{-} ?(.25) *@{>},
"a2"; "b2" **@{-} ?(.75) *@{>},
"a3"; "b3" **@{-} ?(.75) *@{>},
"a4"; "b4" **@{-} ?(.57) *@{>},
"a5"; "b5" **@{-} ?(.25) *@{>},
"b1"; "a4" **@{-},
"b2"; "a5"-(12,0) **@{-}, "a5"-(8,0); "a5" **@{-},
"b3"; "a1"+(12,0) **@{-}, "a1"+(8,0); "a1" **@{-},
"b4"; "a2"+(12,0) **@{-}, "a2"+(8,0); "a2" **@{-},
"b5"; "a3" **@{-},
"b1"+(14,  1)="aa" *!D{5},
"aa"+( 0,-10)="aa" *!D{4},
"aa"+( 0,-10)="aa" *!D{3},
"aa"+( 0,-10)="aa" *!D{2},
"aa"+(10,-10)="aa" *!D{1},
\end{xy}\\
\mbox{\normalsize (c) $\mathcal D''$\rule{0pt}{1.5em}}
&& \mbox{\normalsize  (d) $\mathcal D'''$}
&& \mbox{\normalsize   (e) $\mathcal D=\PG(\pi)$}
\end{array}
$$
\caption{Petal grid diagram defined by the petal permutation $\pi=(3,5,2,4,1)$}
\label{F:ex2}
\end{figure}
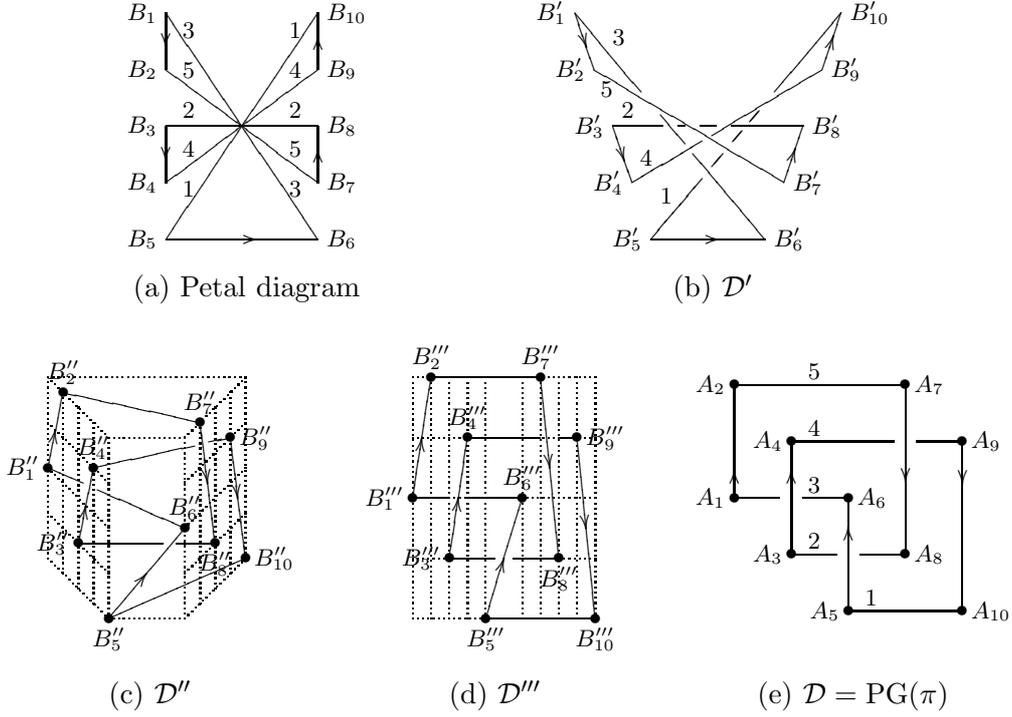

\begin{lemma}\label{l:gr-alg}
Let $\pi=(a_1,\ldots,a_p)$ be a petal permutation of a knot $K$.
Let $p=2n+1$.
Then $K$ has a petal grid diagram with $2p$ nodes
$$
A_i=\left( \left\lceil\textstyle \frac i2\right\rceil, a_i\right),
\qquad 1\le i\le 2p.
$$
The horizontal edges are $A_{i}A_{p+i}$
and the vertical edges are
$A_{2i-1}A_{2i}$ for $1\le i\le p$.
When we travel along the knot, we visit the nodes
in the order
$$ \cdots \to A_{2i-1}\to A_{2i}\to A_{p+2i}\to A_{p+2i+1} \to A_{2i+1}\to A_{2i+2}\to A_{p+2i+2}\to\cdots.$$
The inflection edge is $A_pA_{p+1}$.
The horizontal edge $A_iA_{p+i}$ ($1\le i\le p$)
has length $n=\frac{p-1}2$ if\/ $i$ is odd
and has length $n+1=\frac{p+1}2$ if\/ $i$ is even.
\end{lemma}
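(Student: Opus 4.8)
The statement to prove is Lemma~\ref{l:gr-alg}, which packages the construction just described. Here is how I would organize the proof.

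\medskip

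The plan is to verify that the figure-by-figure construction $\pi \rightsquigarrow \mathcal D' \rightsquigarrow \mathcal D'' \rightsquigarrow \mathcal D''' \rightsquigarrow \mathcal D = \PG(\pi)$ indeed produces a valid petal grid diagram of $K$, and then to extract the combinatorial data (node coordinates, edge adjacencies, traversal order, edge lengths, inflection edge) claimed in the statement. The topological content — that $\mathcal D$ represents $K$ — is essentially already argued in the running text: $\mathcal D'$ is obtained from the petal diagram by an ambient isotopy of the disk that ``unrolls'' the loops into the trapezoid, $\mathcal D''$ is a lift of $\mathcal D'$ whose $z$-coordinates record $\pi$, so $\mathcal D''$ is isotopic to $K$ (the relative heights of the strands at the multi-crossing are exactly what $\pi$ encodes), $\mathcal D'''$ is the $xz$-projection of $\mathcal D''$ and hence another diagram of $K$, and finally $\mathcal D$ is obtained from $\mathcal D'''$ by the order-preserving relabeling of $x$-coordinates $-p-1+i \mapsto \lceil i/2\rceil$ on the left half and $i \mapsto \lceil (p+i)/2\rceil$ on the right half, which is a planar isotopy of diagrams. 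So for this part I would simply say that this follows from the discussion preceding the lemma, possibly spelling out the one nontrivial point: that a petal permutation genuinely records the over/under data at the multi-crossing, so that changing the realization of the loops while keeping the heights gives an isotopic knot.

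\medskip

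The combinatorial bookkeeping is the part that actually needs checking, and I would do it directly from the formula $A_i = (\lceil i/2\rceil, a_i)$ for $1\le i\le 2p$, keeping in mind the convention $a_{p+i}=a_i$. First, the $y$-coordinates: since $(a_1,\dots,a_p)$ is a permutation of $\{1,\dots,p\}$, the nodes occupy each horizontal line $\R\times\{k\}$ exactly twice, namely at $A_i$ and $A_{p+i}$ where $a_i=k$. Next, the $x$-coordinates: as $i$ runs over $1,\dots,2p$, the value $\lceil i/2\rceil$ takes each of $1,\dots,p$ exactly twice (at $i=2j-1$ and $i=2j$), so each vertical line $\{j\}\times\R$ also contains exactly two nodes. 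This gives a grid diagram on $\{1,\dots,p\}^2$. The horizontal edges are the pairs sharing a $y$-coordinate, i.e.\ $A_iA_{p+i}$; the vertical edges are the pairs sharing an $x$-coordinate, i.e.\ $A_{2j-1}A_{2j}$. Traversal order: along the knot we go $B_{2i-1}\to B_{2i}$ (a ``loop'' segment, which becomes the vertical edge $A_{2i-1}A_{2i}$) and then $B_{2i}\to B_{2i+p}$ wait — rather, following the segments $B_jB_{p+j}$, from $A_{2i}$ we traverse the horizontal edge to $A_{p+2i}$, then the vertical edge $A_{p+2i}A_{p+2i+1}$, then the horizontal edge back to $A_{2i+1}$, giving exactly the stated cyclic order $\cdots\to A_{2i-1}\to A_{2i}\to A_{p+2i}\to A_{p+2i+1}\to A_{2i+1}\to\cdots$ (all indices mod $p$ on the subscripts in the sense $A_{p+k}$ for $k>p$ meaning $A_k$).

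\medskip

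It remains to compute edge lengths and identify the inflection edge, which is a short calculation: the horizontal edge $A_iA_{p+i}$ joins $x$-coordinates $\lceil i/2\rceil$ and $\lceil (p+i)/2\rceil = \lceil i/2\rceil + \frac{p+1}{2}$ when $i$ is odd (since $p$ is odd), giving length $\frac{p+1}{2}=n+1$; wait, let me recheck — for $i$ odd, $p+i$ is even, $\lceil(p+i)/2\rceil=(p+i)/2$ and $\lceil i/2\rceil=(i+1)/2$, so the length is $(p+i)/2-(i+1)/2=(p-1)/2=n$; for $i$ even, $\lceil i/2\rceil=i/2$, $p+i$ is odd, $\lceil(p+i)/2\rceil=(p+i+1)/2$, length $(p+i+1)/2-i/2=(p+1)/2=n+1$. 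So horizontal edges have length $n$ for odd $i$ and $n+1$ for even $i$, matching the claim. A vertical edge $A_{2j-1}A_{2j}$ has length $|a_{2j-1}-a_{2j}|$ in general; the inflection edge must be the one whose two adjacent horizontal edges point in opposite directions. Its two adjacent horizontal edges at a vertical edge $A_{2j-1}A_{2j}$ are the ones incident to $A_{2j-1}$ and to $A_{2j}$; one checks from the traversal order and the $x$-coordinate pattern that the only vertical edge adjacent to two horizontal edges of the same length $n$ (hence both ``inward-pointing'' from a shared $x$-column in opposite senses — here $x$-column $n$ on one side and $1$... ) — concretely, the vertical edge $A_pA_{p+1}$ joins $\lceil p/2\rceil=(p+1)/2=n+1$... hmm, $A_p$ has $x$-coordinate $\lceil p/2\rceil=n+1$ and $A_{p+1}$ has $x$-coordinate $\lceil(p+1)/2\rceil=n+1$, consistent with it being a vertical edge; its two adjacent horizontal edges are $A_{p-1}A_{2p-1}$ (length $n$, since $p-1$ even gives $n+1$... ) — I would carefully tabulate this rather than hand-wave, using that the horizontal edge incident to $A_p$ is $A_pA_{2p}$ with $p$ even? no, $p$ odd — so I would just present the clean computation: among all $p$ horizontal edges, those with odd index $i$ have length $n$ and those with even index have length $n+1$; the vertical edge $A_{2j-1}A_{2j}$ is adjacent to horizontal edges of indices $2j-1$ (length $n$) and $2j$ (length $n+1$) for $1\le j\le n$, and the vertical edge $A_pA_{p+1}=A_{2n+1}A_{2n+2}$ is adjacent to horizontal edges of indices $p=2n+1$ and $p+1=2n+2\equiv 1$, i.e.\ both of length $n$ — so this last vertical edge is the unique one with two adjacent horizontal edges of equal length $n$, and checking their directions (one points left, one points right, from the traversal $A_{p-1}\to A_p\to A_{2p}\to A_{2p+1}=A_1\to\dots$) confirms it is the inflection edge, while all other vertical edges have one adjacent horizontal edge of length $n$ and one of length $n+1$. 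This verifies conditions (i) and (ii) of the definition, and the lemma follows. I expect the only mildly delicate point to be keeping the index arithmetic mod $p$ straight when reading off adjacent horizontal edges and their orientations at the inflection edge; everything else is bookkeeping.
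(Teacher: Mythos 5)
Your proposal is correct and follows essentially the same route as the paper: the paper proves this lemma simply by the construction $\mathcal D'\to\mathcal D''\to\mathcal D'''\to\mathcal D$ described immediately before it, with the lemma recording the resulting coordinates, traversal order, and edge lengths, which is exactly what you verify. Your index bookkeeping (length $n$ for odd $i$, $n+1$ for even $i$, and the identification of $A_pA_{p+1}$ as the unique vertical edge whose two adjacent horizontal edges both have length $n$ and extend to opposite sides) is accurate, despite the somewhat meandering presentation.
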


\begin{definition}
Let $\pi=(a_1,\ldots,a_p)$ be a $p$-permutation for an odd integer $p\ge 3$.
\begin{itemize}
\item[(i)]
$\PG(\pi)$ denotes the petal grid diagram in Lemma~\ref{l:gr-alg},
i.e.~the petal grid diagram with $2p$ nodes
$A_i=\left( \left\lceil\textstyle \frac i2\right\rceil, a_i\right)$
for $1\le i\le 2p$.
\item[(ii)]
$K(\pi)$ denotes the knot determined by $\PG(\pi)$.
\end{itemize}
\end{definition}

For example, if $\pi=(3,5,2,4,1)$, then $K(\pi)$ is the $(2,3)$-torus knot
as shown in Figure~\ref{F:ex2}(e).

\section{Braids}

We start this section by reviewing briefly some basic material for braid groups. 
See [Bir74,ECH+92,BKL98] for details.
Let $\sigma_1,\ldots,\sigma_{n-1}$ denote
the standard generators of the braid group $B_n$ on $n$ strands.
$B_n$ has the group presentation
$$
B_n  =  \left\langle \sigma_1,\ldots,\sigma_{n-1} \left|
\begin{array}{ll}
\sigma_i \sigma_j = \sigma_j \sigma_i & \mbox{if } |i-j| \ge 2, \\
\sigma_i \sigma_j \sigma_i = \sigma_j \sigma_i \sigma_j & \mbox{if } |i-j| = 1.
\end{array}
\right.\right\rangle.
$$
An $n$-braid $\alpha$ can be regarded as a collection of $n$ strands
$l=l_1\cup\cdots\cup l_n$ in $[0,1]\times \R^2$
such that $|\,l\cap (\{t\}\times \R^2)|=n$ for $0\le t\le 1$ and
$l_i\cap (\{0,1\}\times \R^2)=\{(0,a_i,0), (1,i,0)\}$
for some $a_i\in\{1,\ldots,n\}$ for each $i=1,\ldots,n$.
See Figure~\ref{f:gen}(a,b) for diagrams of $\sigma_i$ and $\sigma_i^{-1}$.
Notice that the $i$-th strand $l_i$ connects
the $i$th point in the right to the $a_i$th point in the left
and that $(a_1,\ldots, a_n)$ is an $n$-permutation.

\begin{definition}[induced permutation]
The permutation $(a_1,\ldots, a_n)$ in the above is
called the \emph{induced permutation} of
$\alpha$ and denoted by $\pi_\alpha$.
If $\pi_\alpha$ is the identity, $\alpha$ is called a \emph{pure braid}.
\end{definition}

\begin{definition}
The $n$-braids $\delta=\delta_n$ and $\Delta=\Delta_n$ are defined
as follows (see Figure~\ref{f:gen}(c,d)).
\begin{align*}
\delta&=\sigma_{n-1}\sigma_{n-2}\cdots\sigma_1\\
\Delta&=\sigma_1(\sigma_2\sigma_1)\cdots(\sigma_{n-1}\cdots\sigma_1)
\end{align*}
\end{definition}

Their induced permutations are
$\pi_\delta=(n,1,2,\ldots,n-1)$ and $\pi_\Delta=(n,n-1,\ldots,2,1)$.

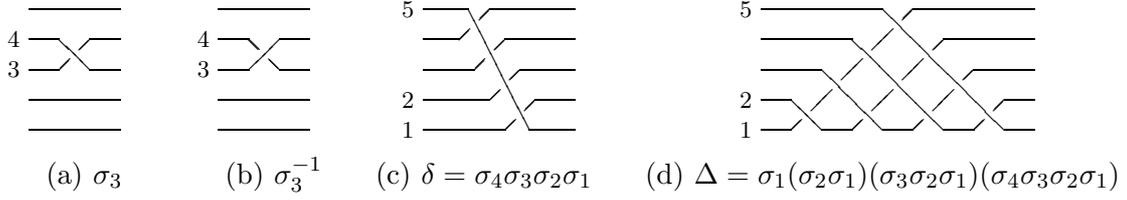
\begin{figure}\footnotesize
$$
\begin{array}{*7{c}}
\begin{xy}/r.4mm/:
(0,0)="or"="aa", (0,10)="vd",
"aa"; "aa"+(30,0) **@{-},
"aa"+"vd"="aa"="bb"; "bb"+(30,0) **@{-},
"aa"+"vd"="aa"="bb"; "bb"+(10,0)="bb" **@{-}; "bb"+(3.5,3.5)="bb" **@{-},
    "bb"+(3,3)="bb"; "bb"+(3.5,3.5)="bb" **@{-}; "aa"+(30,10) **@{-},
"aa"+"vd"="aa"="bb"; "bb"+(10,0)="bb" **@{-}; "bb"+(10,-10)="bb" **@{-};
    "aa"+(30,-10) **@{-},
    "aa" *+!R{4}, "aa"-"vd" *+!R{3},
"aa"+"vd"="aa"; "aa"+(30,0) **@{-},
\end{xy}
&&
\begin{xy}/r.4mm/:
(0,0)="or"="aa", (0,10)="vd",
(0,0)="or"="aa", (0,10)="vd",
"aa"; "aa"+(30,0) **@{-},
"aa"+"vd"="aa"="bb"; "bb"+(30,0) **@{-},
"aa"+"vd"="aa"="bb"; "bb"+(10,0)="bb" **@{-}; "bb"+(10,10)="bb" **@{-};
    "aa"+(30,10) **@{-},
"aa"+"vd"="aa"="bb"; "bb"+(10,0)="bb" **@{-}; "bb"+(3.5,-3.5)="bb" **@{-},
    "bb"+(3,-3)="bb"; "bb"+(3.5,-3.5)="bb" **@{-}; "aa"+(30,-10) **@{-},
    "aa" *+!R{4}, "aa"-"vd" *+!R{3},
"aa"+"vd"="aa"; "aa"+(30,0) **@{-},
\end{xy}
&&
\begin{xy}/r.4mm/:
(0,50)="or"="aa", (15,0)="llen", (50,0)="len", (0,10)="vd",
"aa"-"vd"="aa"="bb"; "bb"+"llen"="bb" **@{-};
    "bb"+(20,-40)="bb" **@{-}; "aa"+"len"+(0,-40) **@{-},
    "aa" *+!R{5},
"aa"-"vd"="aa"="bb"; "aa"+"llen"-(3,0)="bb" **@{-};
    "bb"+(3.5,3.5)="bb" **@{-}, "bb"+(3,3)="bb"; "bb"+(3.5,3.5)="bb" **@{-};
    "aa"+(0,10)+"len" **@{-},
"aa"-"vd"="aa"="bb"; "aa"+"llen"+(2,0)="bb" **@{-};
    "bb"+(3.5,3.5)="bb" **@{-}, "bb"+(3,3)="bb"; "bb"+(3.5,3.5)="bb" **@{-};
    "aa"+(0,10)+"len" **@{-},
"aa"-"vd"="aa"="bb"; "aa"+"llen"+(7,0)="bb" **@{-};
    "bb"+(3.5,3.5)="bb" **@{-}, "bb"+(3,3)="bb"; "bb"+(3.5,3.5)="bb" **@{-};
    "aa"+(0,10)+"len" **@{-},
    "aa" *+!R{2},
"aa"-"vd"="aa"="bb"; "aa"+"llen"+(12,0)="bb" **@{-};
    "bb"+(3.5,3.5)="bb" **@{-}, "bb"+(3,3)="bb"; "bb"+(3.5,3.5)="bb" **@{-};
    "aa"+(0,10)+"len" **@{-},
    "aa" *+!R{1},
\end{xy}
&&
\begin{xy}/r.4mm/:
(0,0)="or",
"or"+(0, 40)="LLU", "LLU"+(0,0)="LU",
"LU"+(90,-40)="RD", "RD"+(0,0)="RRD",
"LLU"="aa"; "LU"="bb"  **@{-};
    "bb"+(40,0)="bb" **@{-}; "bb"+(40,-40)="bb" **@{-};
    "RD" **@{-}; "RRD" **@{-},
    "aa" *+!R{5},
"LLU"-(0,10)="aa"; "LU"-(0,10)="bb" **@{-};
    "bb"+(30,0)="bb" **@{-}; "bb"+(30,-30)="bb" **@{-};
    "bb"+(10,0)="bb" **@{-}; "bb"+(3.5,3.5) **@{-}, "bb"+(6.5,6.5);
    "bb"+(10,10)="bb" **@{-}; "RD"+(0,10) **@{-}; "RRD"+(0,10) **@{-},
"LLU"-(0,20)="aa"; "LU"-(0,20)="bb" **@{-};
    "bb"+(20,0)="bb" **@{-}; "bb"+(20,-20)="bb" **@{-};
    "bb"+(10,0)="bb" **@{-}; "bb"+(3.5,3.5) **@{-}, "bb"+(6.5,6.5);
        "bb"+(13.5,13.5) **@{-}, "bb"+(16.5,16.5);
    "bb"+(20,20)="bb" **@{-}; "RD"+(0,20) **@{-}; "RRD"+(0,20) **@{-},
"LLU"-(0,30)="aa"; "LU"-(0,30)="bb" **@{-};
    "bb"+(10,0)="bb" **@{-}; "bb"+(10,-10)="bb" **@{-};
    "bb"+(10,0)="bb" **@{-}; "bb"+(3.5,3.5) **@{-}, "bb"+(6.5,6.5);
        "bb"+(13.5,13.5) **@{-}, "bb"+(16.5,16.5);
        "bb"+(23.5,23.5) **@{-}, "bb"+(26.5,26.5);
    "bb"+(30,30)="bb" **@{-}; "RD"+(0,30) **@{-}; "RRD"+(0,30) **@{-},
    "aa" *+!R{2},
"LLU"-(0,40)="aa"; "LU"-(0,40)="bb" **@{-};
    "bb"+(10,0)="bb" **@{-}; "bb"+(3.5,3.5) **@{-}, "bb"+(6.5,6.5);
        "bb"+(13.5,13.5) **@{-}, "bb"+(16.5,16.5);
        "bb"+(23.5,23.5) **@{-}, "bb"+(26.5,26.5);
        "bb"+(33.5,33.5) **@{-}, "bb"+(36.5,36.5);
    "bb"+(40,40)="bb" **@{-}; "RD"+(0,40) **@{-}; "RRD"+(0,40) **@{-},
    "aa" *+!R{1},
\end{xy}
\\
\qquad\mbox{\normalsize (a)  $\sigma_3$}
&&\qquad \mbox{\normalsize (b) $\sigma_3^{-1}$}
&& \mbox{\normalsize (c) $\delta=\sigma_4\sigma_3\sigma_2\sigma_{1}$}
&& \mbox{\normalsize (d) $\Delta
=\sigma_1(\sigma_2\sigma_1)(\sigma_3\sigma_2\sigma_1)(\sigma_4\sigma_3\sigma_2\sigma_1)$}
\rule{0pt}{1.5em}
\end{array}
$$
\caption{The braids $\sigma_3$, $\sigma_3^{-1}$, $\delta$ and $\Delta$ in $B_5$}
\label{f:gen}
\end{figure}

\begin{definition}
For $2\le k\le n$,
the $n$-braids $D_k$, $E_k$ and $U_k$ are defined as follows
(see Figure~\ref{f:DEU}).
\begin{align*}
D_k&=\sigma_{k-1}\cdots\sigma_2\sigma_1\\
E_k&=\sigma_1\sigma_2\cdots\sigma_{k-1}\\
U_k&=D_k E_k=(\sigma_{k-1}\cdots\sigma_2\sigma_1)(\sigma_1\sigma_2\cdots \sigma_{k-1})
\end{align*}
For notational convenience, we define $D_1=E_1=U_1=1$.
\end{definition}

\begin{figure}\footnotesize
$$
\begin{array}{*7{c}}
\begin{xy}/r.4mm/:
(0,25)="or"="aa", (15,0)="llen", (45,0)="len", (0,10)="vd",
"aa"; "aa"+"len" **@{-},
"aa"-"vd"="aa"; "aa"+"len" **@{-},
"aa"-"vd"="aa"="bb"; "bb"+"llen"="bb" **@{-}; "bb"+(15,-30)="bb" **@{-}; "aa"+"len"+(0,-30) **@{-},
"aa"-"vd"="aa"="bb"; "aa"+"llen"-(3,0)="bb" **@{-};
    "bb"+(3.5,3.5)="bb" **@{-}, "bb"+(3,3)="bb"; "bb"+(3.5,3.5)="bb" **@{-};
    "aa"+(0,10)+"len" **@{-},
"aa"-"vd"="aa"="bb"; "aa"+"llen"+(2,0)="bb" **@{-};
    "bb"+(3.5,3.5)="bb" **@{-}, "bb"+(3,3)="bb"; "bb"+(3.5,3.5)="bb" **@{-};
    "aa"+(0,10)+"len" **@{-},
"aa"-"vd"="aa"="bb"; "aa"+"llen"+(7,0)="bb" **@{-};
    "bb"+(3.5,3.5)="bb" **@{-}, "bb"+(3,3)="bb"; "bb"+(3.5,3.5)="bb" **@{-};
    "aa"+(0,10)+"len" **@{-},
"aa" *+!R{1},
"aa"+"vd"+"vd"+"vd"="aa" *+!R{4},
"aa"+"vd"+"vd" *+!R{6},\end{xy}
&\quad&
\begin{xy}/r.4mm/:
(0,25)="or"="aa", (15,0)="llen", (45,0)="len", (0,10)="vd",
"aa"; "aa"+"len" **@{-},
"aa"-"vd"="aa"; "aa"+"len" **@{-},
"aa"-"vd"="aa"="bb"; "aa"+"llen"+(6,0)="bb" **@{-};
    "bb"+(10,-10)="bb" **@{-}; "aa"+(0,-10)+"len" **@{-},
"aa"-"vd"="aa"="bb"; "aa"+"llen"+(3,0)="bb" **@{-};
    "bb"+(10,-10)="bb" **@{-}; "aa"+(0,-10)+"len" **@{-},
"aa"-"vd"="aa"="bb"; "aa"+"llen"+(0,0)="bb" **@{-};
    "bb"+(10,-10)="bb" **@{-}; "aa"+(0,-10)+"len" **@{-},
"aa"-"vd"="aa"="bb"; "aa"+"llen"+(1,0)="bb" **@{-};
    "bb"+(2,4)="bb" **@{-}, "bb"+(2.2,4.4)="bb"; "bb"+(2,4)="bb" **@{-},
    "bb"+(2.2,4.4)="bb"; "bb"+(2,4)="bb" **@{-},
    "bb"+(2.2,4.4)="bb"; "bb"+(2.4,4.8)="bb" **@{-};
    "aa"+(0,30)+"len" **@{-},
"aa" *+!R{1},
"aa"+"vd"+"vd"+"vd"="aa" *+!R{4},
"aa"+"vd"+"vd" *+!R{6},
\end{xy}
&\quad&
\begin{xy}/r.4mm/:
(0,25)="or"="aa", (15,0)="llen", (50,0)="len", (0,10)="vd",
"aa"; "aa"+"len" **@{-},
"aa"-"vd"="aa"; "aa"+"len" **@{-},
"aa"-"vd"="aa"="bb"; "bb"+"llen"="bb" **@{-};
    "bb"+(5,-40)="bb" **@{-};
    "bb"+(10,0)="bb" **@{-};
    "bb"+(1,8) **@{-}, "bb"+(1.5,12);
    "bb"+(2.25,18) **@{-}, "bb"+(2.75,22);
    "bb"+(3.50,28) **@{-}, "bb"+(4,32);
    "bb"+(5,40) **@{-};
    "aa"+"len" **@{-},
"aa"-"vd"="aa"; "aa"+"llen"+(-2,0) **@{-}, "aa"+"llen"+(4,0); "aa"+"len" **@{-},
"aa"-"vd"="aa"; "aa"+"llen"+(-1,0) **@{-}, "aa"+"llen"+(5,0); "aa"+"len" **@{-},
"aa"-"vd"="aa"; "aa"+"llen"+(0,0) **@{-}, "aa"+"llen"+(6,0); "aa"+"len" **@{-},
"aa" *+!R{1},
"aa"+"vd"+"vd"+"vd"="aa" *+!R{4},
"aa"+"vd"+"vd" *+!R{6},
\end{xy}\\
\mbox{\normalsize (a) $D_4=\sigma_3\sigma_2\sigma_1$}
&\qquad& \mbox{\normalsize (b) $E_4=\sigma_{1}\sigma_2\sigma_3$}
&\qquad& \mbox{\normalsize (c) $U_4=D_4E_4$}\rule{0pt}{2em}
\end{array}
$$
\caption{The braids $D_4$, $E_4$ and $U_4$ in $B_6$}\label{f:DEU}
\end{figure}
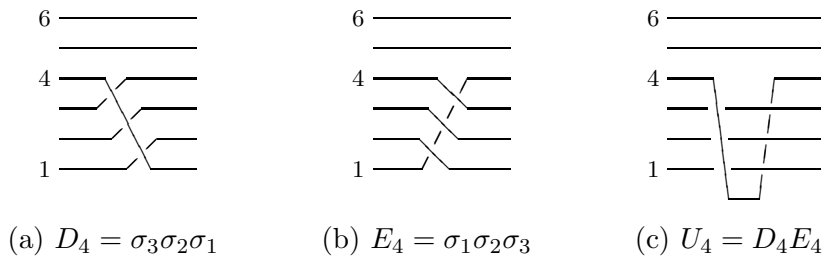

The braids $\delta$ and $\Delta$ are expressed as
$\delta=D_n$ and $\Delta=D_2D_3\cdots D_n$.
It is well known that $\Delta^2=\delta^n$ and that
the center of $B_n$ for $n\ge 3$ is the infinite cyclic group generated
by $\Delta^2=\delta^n$.
It is also well known that  $\Delta$ can be also written as
$\Delta=(\sigma_1\ldots\sigma_{n-1}) \cdots(\sigma_1\sigma_2)\sigma_1
=E_n\cdots E_3E_2$.

\begin{definition}[automorphism $\tau$]
$\tau:B_n\to B_n$ is the inner automorphism defined by
$$\tau(\alpha)=\delta^{-1}\alpha\delta.$$
\end{definition}

Notice that $\tau(\sigma_i)=\sigma_{i+1}$ for $i=1,\ldots,n-2$.

\begin{definition}[positive braid]
An $n$-braid $\beta$ is called a \emph{positive braid}
if it can be written as a word in positive powers of the generators
$\{\sigma_1,\ldots,\sigma_{n-1}\}$
without use of the inverse elements $\sigma_i^{-1}$.
\end{definition}

\begin{definition}[permutation braid]
A braid $\beta$ is called a \emph{permutation braid}
if it is a positive braid such that no pair of strands cross more than once.
\end{definition}

We usually denote positive braids and permutation braids
by capital letters such as
$P,Q,\ldots$.
It is well known that the set of permutation $n$-braids
is in one-to-one correspondence with the set of $n$-permutations,
hence we have the following lemma.

\begin{lemma}\label{L:perm}
If $Q^{-1}P$ is a pure braid for permutation braids $P$ and $Q$,
then $Q^{-1}P$ is the identity.
\end{lemma}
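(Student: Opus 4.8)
\textbf{Proof plan for Lemma~\ref{L:perm}.}
The plan is to use the well-known bijection between permutation $n$-braids and $n$-permutations, which says that a permutation braid is completely determined by its induced permutation $\pi_\alpha$. First I would observe that the induced permutation $\pi$ gives a homomorphism $B_n\to S_n$ (the symmetric group), under which $\pi_{Q^{-1}P}=\pi_Q^{-1}\circ\pi_P$. By hypothesis $Q^{-1}P$ is a pure braid, so $\pi_{Q^{-1}P}$ is the identity permutation, which forces $\pi_P=\pi_Q$.

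Next I would invoke the cited one-to-one correspondence: since $P$ and $Q$ are permutation braids with $\pi_P=\pi_Q$, they are equal as elements of $B_n$, i.e.\ $P=Q$. Therefore $Q^{-1}P=Q^{-1}Q=1$, which is the identity braid.

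The only real content here is the injectivity half of the bijection between permutation braids and permutations — that two permutation braids with the same induced permutation are equal — and this is exactly the fact stated in the sentence preceding the lemma (and is standard, going back to the theory of the Garside structure on $B_n$, where each permutation braid is the unique positive lift of minimal length of its image in $S_n$). So there is no genuine obstacle: the lemma is an immediate corollary of that correspondence together with the multiplicativity of $\pi_{(\cdot)}$. If one wished to be self-contained, the main step to justify would be that a permutation braid in which no two strands cross more than once is the unique minimal-length positive word representing its underlying permutation; but since the correspondence is assumed known, I would simply cite it and conclude in the two lines above.
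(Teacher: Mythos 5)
Your proposal is correct and is essentially the paper's own argument: pure implies $\pi_P=\pi_Q$, and the one-to-one correspondence between permutation braids and permutations gives $P=Q$, hence $Q^{-1}P=1$. You merely spell out the multiplicativity of the induced-permutation homomorphism, which the paper leaves implicit.
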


\begin{proof}
Since $Q^{-1}P$ is a pure braid,
$P$ and $Q$ have the same induced permutations, i.e.\ $\pi_P=\pi_Q$, hence $P=Q$.
\end{proof}

The braids $\delta$ and $\Delta$ are permutation braids
with induced permutations $\pi_\delta$ and $\pi_\Delta$ respectively.

\begin{definition}
For an $n$-permutation $\pi$,
$P(\pi)$ denotes the permutation $n$-braid with induced permutation $\pi$.
\end{definition}

The braid diagram for $P(\pi)$ with $\pi=(a_1,\ldots,a_n)$
can be drawn as follows.
For each $1\le i\le n$, draw a straight line segment $l_i$
from $(0,a_i)$ to $(1,i)$.
(By perturbing $l_i$'s a small amount if necessary,
we may assume that there are only double crossings.)
When two strands $l_i$ and $l_j$ cross,
the strand $l_i$ crosses over the strand $l_j$ if and only if $a_i>a_j$.

For example,
the permutation braid $P(\pi)$ with $\pi=(2,4,1,3,6,5,7)$
looks like Figure~\ref{F:X}(a).

The following lemma shows basic properties of $D_k$, $E_k$ and $U_k$.

\begin{lemma}\label{L:DEU}
For each $2\le k\le n$, the following hold.
\begin{enumerate}
\item[(i)] $\sigma_i D_k=D_k\sigma_{i+1}$ and $\sigma_{i+1}E_k=E_k\sigma_i$
    for $1\le i\le k-2$.
\item[(ii)] $U_k$ commutes with $\sigma_i$ for $1\le i\le k-2$.
\item[(iii)] $U_k$ commutes with $D_i$, $E_i$ and $U_i$
for $2\le i\le k-1$.
\item[(iv)] $U_{a_1}\cdots U_{a_k}=(D_{a_1}\cdots D_{a_k})(E_{a_k}\cdots E_{a_1})$
for $2\le a_1<a_2<\cdots <a_k\le n$.
\item[(v)] $\Delta^2=\delta^n=U_2U_3\cdots U_n$.
\end{enumerate}
\end{lemma}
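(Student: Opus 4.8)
\textbf{Proof plan for Lemma~\ref{L:DEU}.}
The plan is to prove the five items essentially in the given order, since later items lean on earlier ones. For (i), I would use the far-commutation relations and induction on $k$. Writing $D_k=\sigma_{k-1}D_{k-1}$ and taking $i\le k-2$, if $i\le k-3$ then $\sigma_i$ commutes with $\sigma_{k-1}$ and the inductive hypothesis handles $D_{k-1}$; the only interesting case is $i=k-2$, where one uses the braid relation $\sigma_{k-2}\sigma_{k-1}=\sigma_{k-1}\sigma_{k-1}^{-1}\sigma_{k-2}\sigma_{k-1}$... more cleanly, $\sigma_{k-2}(\sigma_{k-1}\sigma_{k-2}\cdots\sigma_1)=\sigma_{k-1}\sigma_{k-2}\sigma_{k-1}(\sigma_{k-3}\cdots\sigma_1)=(\sigma_{k-1}\cdots\sigma_1)\sigma_{k-1}$ after pushing $\sigma_{k-1}$ rightward past the commuting tail. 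The statement for $E_k$ is the mirror image and follows by the anti-automorphism reversing words (or by an identical induction). Item (ii) is immediate from (i): $\sigma_i U_k=\sigma_i D_kE_k=D_k\sigma_{i+1}E_k=D_kE_k\sigma_i=U_k\sigma_i$ for $1\le i\le k-2$.

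For (iii), since $D_i$, $E_i$ and $U_i$ for $i\le k-1$ are words in $\sigma_1,\ldots,\sigma_{i-1}\subseteq\sigma_1,\ldots,\sigma_{k-2}$, item (ii) gives that $U_k$ commutes with each such generator, hence with any word in them; so (iii) is a formal consequence of (ii). Item (iv) I would prove by induction on $k$. The base case $k=1$ is trivial and $k=2$ is the definition. For the inductive step, write $U_{a_1}\cdots U_{a_k}=U_{a_1}\bigl(U_{a_2}\cdots U_{a_k}\bigr)=U_{a_1}\bigl(D_{a_2}\cdots D_{a_k}\bigr)\bigl(E_{a_k}\cdots E_{a_2}\bigr)$ by the inductive hypothesis. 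Now $U_{a_1}=D_{a_1}E_{a_1}$, and the key point is that $E_{a_1}$ must be commuted to the right past $D_{a_2}\cdots D_{a_k}$. Since $a_1<a_2\le\cdots$, each $D_{a_j}$ ($j\ge 2$) has the form $\sigma_{a_j-1}\cdots\sigma_1$; one checks by a short computation (again via (i)-type relations) that $E_{a_1}D_{a_j}=D_{a_j}E_{a_1}'$ where the discrepancy is absorbed, or more robustly that $E_{a_1}D_m = D_m E_{a_1}$ shifted appropriately when $a_1<m$. I would isolate this as the crucial sub-identity: for $2\le a<b\le n$, $E_aD_b=D_b\,\tau^{?}(E_a)$ — concretely, reindexing shows $E_a$ slides past $D_b$ to become a word on higher-indexed generators, and after sliding past all of $D_{a_2},\ldots,D_{a_k}$ and recombining it lands adjacent to $E_{a_k}\cdots E_{a_2}$ producing $E_{a_k}\cdots E_{a_1}$. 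Finally (v) is the special case of (iv) with $(a_1,\ldots,a_{n-1})=(2,3,\ldots,n)$: it gives $U_2\cdots U_n=(D_2\cdots D_n)(E_n\cdots E_2)=\Delta\cdot\Delta=\Delta^2$, using the two stated expressions $\Delta=D_2D_3\cdots D_n$ and $\Delta=E_n\cdots E_3E_2$; and $\Delta^2=\delta^n$ is quoted.

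I expect the main obstacle to be the commutation bookkeeping inside step (iv): correctly tracking how $E_{a_1}$ (a word in $\sigma_1,\ldots,\sigma_{a_1-1}$) is conjugated as it passes each $D_{a_j}$, and verifying that the cumulative effect is exactly to convert $E_{a_k}\cdots E_{a_2}$ into $E_{a_k}\cdots E_{a_1}$ with no leftover terms. The cleanest route is probably not to move $E_{a_1}$ at all, but instead to induct from the other end — write $U_{a_1}\cdots U_{a_k}=\bigl(U_{a_1}\cdots U_{a_{k-1}}\bigr)U_{a_k}$, apply the inductive hypothesis to the first factor to get $(D_{a_1}\cdots D_{a_{k-1}})(E_{a_{k-1}}\cdots E_{a_1})D_{a_k}E_{a_k}$, and then commute $D_{a_k}$ leftward past $E_{a_{k-1}}\cdots E_{a_1}$ using the $E$-half of (i), since $a_k>a_{k-1}\ge\cdots$ means $D_{a_k}$ only meets $E$'s with strictly smaller index. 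That direction keeps every slide governed by a single clean instance of Lemma~\ref{L:DEU}(i) and avoids reindexing ambiguities, so I would organize the write-up that way.
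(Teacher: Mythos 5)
Items (i)--(iii) and (v) of your plan are fine and essentially identical to the paper's argument: the paper dismisses (i) as obvious (your induction, and the word-reversing anti-automorphism for the $E$-half, is a legitimate verification), and your (ii), (iii), (v) are word-for-word the paper's. Your induction for (iv) also begins exactly as in the paper, peeling off the top factor $U_{a_k}$ and applying the hypothesis to $U_{a_1}\cdots U_{a_{k-1}}$.

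The gap is in the final slide you propose for (iv). You claim $D_{a_k}$ can be commuted leftward past $E_{a_{k-1}}\cdots E_{a_1}$ ``using the $E$-half of (i)'' with no reindexing, because it ``only meets $E$'s with strictly smaller index.'' That commutation is false: $D_{a_k}$ does not commute with $E_{a_j}$ for $a_j<a_k$. Already in $B_3$ one has $E_2D_3=\sigma_1\sigma_2\sigma_1\neq\sigma_2\sigma_1\sigma_1=D_3E_2$. What is true is that pushing the generators of $E_{a_j}$ through $D_{a_k}$ via the \emph{$D$-half} of (i) shifts every index up by one, namely $E_{a_j}D_{a_k}=D_{a_k}(\sigma_2\cdots\sigma_{a_j})$, so the reindexing you hoped to avoid reappears; it is cancelled only after the trailing $E_{a_k}$ is also slid left past the shifted words, the $E$-half of (i) then shifting the indices back down. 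In other words, the block that commutes with the lower-index $E$'s is not $D_{a_k}$ but the whole $U_{a_k}=D_{a_k}E_{a_k}$ --- which is exactly your item (iii), already proved. Replacing your single-factor slide by ``by (iii), $U_{a_k}$ commutes with each $E_{a_j}$, $j<k$'' repairs the step in one line and reproduces the paper's proof:
\begin{align*}
(D_{a_1}\cdots D_{a_{k-1}})(E_{a_{k-1}}\cdots E_{a_1})U_{a_k}
&=(D_{a_1}\cdots D_{a_{k-1}})U_{a_k}(E_{a_{k-1}}\cdots E_{a_1})\\
&=(D_{a_1}\cdots D_{a_k})(E_{a_k}\cdots E_{a_1}).
\end{align*}
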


\begin{proof}
(i)\ \ It is obvious from the definitions of $D_k$ and $E_k$.

(ii)\ \ By (i), $\sigma_i U_k=\sigma_i D_kE_k
=D_k\sigma_{i+1}E_k
=D_kE_k\sigma_i=U_k\sigma_i$.

(iii)\ \ It follows from (ii)
because $D_i$, $E_i$ and $U_i$ are represented by words
on $\sigma_1,\ldots,\sigma_{i-1}$.

(iv)\ \ We use induction on $k$.
It is obvious for $k=1$.
Assume that it holds for $k$, and let $2\le a_1<\cdots<a_k<a_{k+1}\le n$.
By (iii), $U_{a_{k+1}}$ commutes with $E_{a_i}$ for $2\le i\le k$.
By the induction hypothesis,
\begin{align*}
U_{a_1}\cdots U_{a_k}U_{a_{k+1}}
&= (D_{a_1}\cdots D_{a_k})(E_{a_k}\cdots E_{a_1}) U_{a_{k+1}}\\
& = (D_{a_1}\cdots D_{a_k})U_{a_{k+1}}(E_{a_k}\cdots E_{a_1})\\
& = (D_{a_1}\cdots D_{a_k})D_{a_{k+1}}E_{a_{k+1}}(E_{a_k}\cdots E_{a_1}).
\end{align*}
(v)\ \ By (iv),
$U_2\cdots U_n=(D_2\cdots D_n)(E_n\cdots E_2)=\Delta\cdot\Delta=\Delta^2$.
\end{proof}

By Lemma~\ref{L:DEU}(iii), $U_iU_j=U_jU_i$ for any $2\le i,j\le n$.
In fact, $U_2,\ldots,U_n$ generate
an abelian subgroup of $B_n$ with maximum rank.

\begin{definition}
Let $A$ and $B$ be $k$-subsets of $\{1,\ldots,n\}$.
Let $\bar A$ and $\bar B$ denote the complements of $A$ and $B$ respectively
in $\{1,\ldots,n\}$.
We write
$$
\begin{array}{ccc}
A=\{a_1,\ldots,a_k\},
    &\quad&\bar A=\{a_{k+1},\ldots,a_n\},\\
B=\{b_1,\ldots,b_k\},
    &\quad&\bar B=\{b_{k+1},\ldots,b_n\},
\end{array}
$$
where $a_1<\cdots<a_k$, $a_{k+1}<\cdots<a_n$,
$b_1<\cdots<b_k$ and $b_{k+1}<\cdots<b_n$.
We define the $n$-permutation $\pi_{A,B}$ and
the $n$-braids $X_{A,B}$, $\bar X_{A,B}$ and $U(A)$
as follows:
\begin{enumerate}
\item[(i)] $\pi_{A,B}$ is the $n$-permutation defined by $\pi_{A,B}(b_i)=a_i$
 for $1\le i\le n$;
\item[(ii)] $X_{A,B}=P(\pi_{A,B})$, the permutation braid defined by $\pi_{A,B}$;
\item[(iii)] $\bar X_{A,B}=(X_{B,A})^{-1}$;
\item[(iv)] $U(A)=U_{a_1}U_{a_2}\cdots U_{a_k}$.
\end{enumerate}
\end{definition}

\begin{example}
Consider the subsets $A=\{2,4,6\}$ and $B=\{1,2,5\}$ of $\{1,\ldots,7\}$.
The braid $X_{A,B}$ is in Figure~\ref{F:X}(a).
The subsets $A$ and $B$ are not uniquely determined by $X_{A,B}$.
For example, if $A'=\{2,4,5\}$ and $B'=\{1,2,6\}$,
then $X_{A',B'}=X_{A,B}$.
\end{example}

The permutation $\pi_{A,B}$ sends $B$ to $A$ (resp.\ $\bar B$ to $\bar A$)
preserving the orders, and $\pi_{B,A}=\pi_{A,B}^{-1}$.
The braid $\bar X_{A,B}$ is the same as $X_{A,B}$ except that each crossing
is negative.
Both $X_{A,B}$ and $\bar X_{A,B}$ connect
the points in the left with heights in $A$ to the points in the right
with heights in $B$.

\begin{lemma}\label{L:TA}
Let $A$ be a $k$-subset of $\{1,\ldots,n\}$,
and let $L=\{1,\ldots,k\}$ and $T=\{n-i: 0\le i\le k-1\}$.
Then $X_{T,A}X_{A,L}=X_{T,L}$.
\end{lemma}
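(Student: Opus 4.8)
The statement $X_{T,A}X_{A,L}=X_{T,L}$ is, at heart, an assertion about composition of permutation braids, and the natural strategy is to show that both sides are permutation braids with the same induced permutation, then invoke Lemma~\ref{L:perm} (or the uniqueness of permutation braids for a given permutation). The induced permutation side is immediate: $\pi_{T,A}$ sends $A$ to $T$ order-preservingly (and $\bar A$ to $\bar T$ order-preservingly), while $\pi_{A,L}$ sends $L$ to $A$ order-preservingly, so the composite $\pi_{T,A}\circ\pi_{A,L}$ sends $L$ to $A$ to $T$ order-preservingly on both $L$ and its complement; this is exactly $\pi_{T,L}$. So $X_{T,A}X_{A,L}$ and $X_{T,L}$ have the same induced permutation. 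The only real content is that the product $X_{T,A}X_{A,L}$ is again a \emph{permutation} braid, i.e.\ that no two strands of the concatenated diagram cross twice.

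First I would set up the geometric picture. Draw $X_{A,L}$ on $[0,1]$ and $X_{T,A}$ on $[1,2]$, so that the middle level has the $k$ "low" strands at heights $L=\{1,\dots,k\}$ matched to heights $A$ via straight segments in the first factor, and then $A$ matched to $T=\{n-k+1,\dots,n\}$ via straight segments in the second factor. The key observation is that in $X_{A,L}$, the strands are sorted: a strand entering the left at a height in $A$ and exiting the middle at a height in $L$ can be realized by straight line segments, and crossings occur exactly between an "$A$-strand" (one of the $k$ selected) and an "$\bar A$-strand" when their relative order is reversed between left and middle. The same holds for $X_{T,A}$ between middle and right. The point to verify is that a pair of strands that crosses in the left block $[0,1]$ does \emph{not} cross again in the right block $[1,2]$.

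The cleanest way to see this is to track the cyclic/linear order of strands. In $X_{A,L}$, two strands cross iff one has its right (= middle) endpoint in $L$ and the other in $\bar L$ and their left heights are in the "wrong" order relative to their middle heights — concretely, an $A$-strand and an $\bar A$-strand cross in the left block precisely when the $\bar A$-strand's left height is below the $A$-strand's left height but above it in the middle (or some such sorted-vs-unsorted comparison). In the right block $X_{T,A}$, the relevant partition of strands by middle height is $A$ versus $\bar A$, and the strands are again re-sorted (now $A$ goes to the top block $T$). Since in $X_{A,L}$ the $A$-strands all end up at the \emph{bottom} $k$ positions and in $X_{T,A}$ the $A$-strands (entering from those bottom positions) all go to the \emph{top} $k$ positions $T$, the "which strand is above which" relation between an $A$-strand and an $\bar A$-strand is flipped at most... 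I would make this precise by a direct counting/monotonicity argument: parametrize strands, compare the three heights (left, middle, right) of each strand, and show that for any two strands the sign of (relative position) changes at most once across $[0,2]$, hence they cross at most once. Equivalently, one can show the word length of $X_{T,A}X_{A,L}$ (sum of positive exponents) equals the number of inversions of $\pi_{T,L}$, which forces the product to be a reduced positive braid realizing that permutation with no double crossing.

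The step I expect to be the main obstacle is precisely this no-double-crossing verification — making the bookkeeping of the three-level order comparison airtight, particularly handling the interaction between $A$-strands and $\bar A$-strands whose middle heights interleave. A slick alternative I would try first, to avoid case analysis: show directly that $\ell(X_{T,A}) + \ell(X_{A,L}) = \ell(X_{T,L})$ where $\ell$ denotes the number of inversions of the induced permutation (equivalently the minimal braid word length). Since $X_{A,L}$ moves the set $A$ down past $\bar A$ and $X_{T,A}$ then moves it up past $\bar A$ — but the relative internal orders within $A$ and within $\bar A$ are preserved throughout — one checks that the inversions created are disjoint and together account for exactly the inversions of $\pi_{T,L}$. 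Given additivity of length, the concatenated positive word is automatically reduced, and a reduced positive word whose length equals the inversion count of its permutation is a permutation braid; uniqueness then gives $X_{T,A}X_{A,L}=X_{T,L}$. If the length-additivity computation turns out cleanest via the explicit generator expressions $X_{A,L}$, $X_{T,A}$ in terms of the $D_k$'s and $E_k$'s (mirroring Lemma~\ref{L:DEU}(iv)), I would fall back to that concrete route.
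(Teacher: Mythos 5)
Your proposal is correct and follows essentially the same route as the paper: show that $X_{T,A}X_{A,L}$ is a positive braid in which each pair of strands crosses at most once (hence is a permutation braid) and that it induces the same permutation as $X_{T,L}$, then conclude by uniqueness of permutation braids with a given induced permutation. The no-double-crossing step you single out as the main obstacle is exactly what the paper treats as evident from its figure (a selected strand at middle height $a$ crosses an unselected strand at middle height $b$ in the first factor iff $a<b$ and in the second iff $a>b$, so never twice), and your length-additivity fallback is a valid way to make this precise.
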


\begin{proof}
It is straightforward from the picture
(see Figure~\ref{F:X}(b,c)),
hence we give only a sketchy proof.
Since $X_{T,A}$ and $X_{A,L}$ are positive braids,
so is $X_{T,A}X_{A,L}$.
It is easy to see that
each pair of strands of $X_{T,A}X_{A,L}$
cross at most once, hence $X_{T,A}X_{A,L}$ is a permutation braid.
Since $X_{T,A}X_{A,L}$ and $X_{T,L}$ are
permutation braids with the same induced permutations,
they are equal.
\end{proof}

\begin{figure}\footnotesize
$$
\begin{array}{ccccc}
\begin{xy}/r.4mm/:
(0,0)="or", (50,0)="hd", (0,10)="vd",
     "or"="a1"="aa" *{\bullet} *+!R{1}, "aa"+"hd"="b1" *{\bullet} *+!L{1},
"aa"+"vd"="a2"="aa" *{\bullet} *+!R{2}, "aa"+"hd"="b2" *{\bullet} *+!L{2},
"aa"+"vd"="a3"="aa" *{\bullet} *+!R{3}, "aa"+"hd"="b3" *{\bullet} *+!L{3},
"aa"+"vd"="a4"="aa" *{\bullet} *+!R{4}, "aa"+"hd"="b4" *{\bullet} *+!L{4},
"aa"+"vd"="a5"="aa" *{\bullet} *+!R{5}, "aa"+"hd"="b5" *{\bullet} *+!L{5},
"aa"+"vd"="a6"="aa" *{\bullet} *+!R{6}, "aa"+"hd"="b6" *{\bullet} *+!L{6},
"aa"+"vd"="a7"="aa" *{\bullet} *+!R{7}, "aa"+"hd"="b7" *{\bullet} *+!L{7},
"a1"="aa"; "aa"+(12.5,5) **@{-}, "aa"+(20,8); "aa"+(35,14) **@{-},
    "aa"+(40,16); "b3" **@{-},
"a2"; "b1" **@{-},
"a3"="aa"; "aa"+(12.5,2.5) **@{-}, "aa"+(20,4); "b4" **@{-},
"a4"; "b2" **@{-},
"a5"="aa"; "aa"+(20,4) **@{-}, "aa"+(30,6); "b6" **@{-},
"a6"; "b5" **@{-},
"a7"; "b7" **@{-},
\end{xy}
&\qquad&
\begin{xy}/r.4mm/:
(40,0)="hd", (60,0)="hdL", (-10,60)="vd",
(0,0)="or"="aa"="a1","aa"+"hd"="b1", "aa"+"hd"+"hd"="c1",
"aa"+(0,10)="aa"="a2", "aa"+"hd"="b2", "aa"+"hd"+"hd"="c2",
"aa"+(0,10)="aa"="a3", "aa"+"hd"="b3", "aa"+"hd"+"hd"="c3",
"aa"+(0,10)="aa"="a4", "aa"+"hd"="b4", "aa"+"hd"+"hd"="c4",
"aa"+(0,10)="aa"="a5", "aa"+"hd"="b5", "aa"+"hd"+"hd"="c5",
"aa"+(0,10)="aa"="a6", "aa"+"hd"="b6", "aa"+"hd"+"hd"="c6",
"aa"+(0,10)="aa"="a7", "aa"+"hd"="b7", "aa"+"hd"+"hd"="c7",
"a1"="aa" *{\bullet}, "aa"+"hd" *{\bullet}, "aa"+"hd"+"hd" *{\bullet},
"aa"+(0,10)="aa" *{\bullet}, "aa"+"hd" *{\bullet}, "aa"+"hd"+"hd" *{\bullet},
"aa"+(0,10)="aa" *{\bullet}, "aa"+"hd" *{\bullet}, "aa"+"hd"+"hd" *{\bullet},
"aa"+(0,10)="aa" *{\bullet}, "aa"+"hd" *{\bullet}, "aa"+"hd"+"hd" *{\bullet},
"aa"+(0,10)="aa" *{\bullet}, "aa"+"hd" *{\bullet}, "aa"+"hd"+"hd" *{\bullet},
"aa"+(0,10)="aa" *{\bullet}, "aa"+"hd" *{\bullet}, "aa"+"hd"+"hd" *{\bullet},
"aa"+(0,10)="aa" *{\bullet}, "aa"+"hd" *{\bullet}, "aa"+"hd"+"hd" *{\bullet},
"a7"; "b6" **@{-}; "c3" **@{-},
"a6"; "b4" **@{-}; "c2" **@{-},
"a5"; "b2" **@{-}; "c1" **@{-},
"a4"="aa";
    "aa"+(4.8,3.6)="aa" **@{-};
    "aa"+(4,3)="aa";
    "aa"+(6,4.5)="aa" **@{-};
    "aa"+(4,3)="aa";
    "aa"+(8,6)="aa" **@{-};
    "aa"+(4.8,3.6)="aa";
    "b7" **@{-}; "c7" **@{-},
"a3"="aa";
    "aa"+(12,6)="aa" **@{-},
    "aa"+(6,3)="aa";
    "aa"+(8,4)="aa" **@{-},
    "aa"+(6,3)="aa";
    "b5"="aa" **@{-};
    "aa"+(6,1.5)="aa" **@{-},
    "aa"+(8,2)="aa";
    "c6" **@{-},
"a2"="aa"; "aa"+(26,6.5)="aa" **@{-},
    "aa"+(8,2)="aa"; "b3"="aa" **@{-};
    "aa"+(8,4)="aa" **@{-},
    "aa"+(6,3)="aa";
    "aa"+(8,4)="aa" **@{-},
    "aa"+(6,3)="aa";
    "c5" **@{-},
"a1"; "b1"="aa" **@{-};
    "aa"+(8,6)="aa" **@{-};
    "aa"+(4,3)="aa";
    "aa"+(10,7.5)="aa" **@{-};
    "aa"+(4,3)="aa";
    "aa"+(6,4.5)="aa" **@{-};
    "aa"+(4,3)="aa"; "c4" **@{-},
"a1" *{\bullet} *+!R{1},
"a2" *{\bullet} *+!R{2},
"a3" *{\bullet} *+!R{3},
"a4" *{\bullet} *+!R{4},
"a5" *{\bullet} *+!R{5},
"a6" *{\bullet} *+!R{6},
"a7" *{\bullet} *+!R{7},
"c1" *{\bullet} *+!L{1},
"c2" *{\bullet} *+!L{2},
"c3" *{\bullet} *+!L{3},
"c4" *{\bullet} *+!L{4},
"c5" *{\bullet} *+!L{5},
"c6" *{\bullet} *+!L{6},
"c7" *{\bullet} *+!L{7},
\end{xy}
&\qquad&
\begin{xy}/r.4mm/:
(0,0)="or", (50,0)="hd", (0,10)="vd",
     "or"="a1"="aa" *{\bullet} *+!R{1}, "aa"+"hd"="b1" *{\bullet} *+!L{1},
"aa"+"vd"="a2"="aa" *{\bullet} *+!R{2}, "aa"+"hd"="b2" *{\bullet} *+!L{2},
"aa"+"vd"="a3"="aa" *{\bullet} *+!R{3}, "aa"+"hd"="b3" *{\bullet} *+!L{3},
"aa"+"vd"="a4"="aa" *{\bullet} *+!R{4}, "aa"+"hd"="b4" *{\bullet} *+!L{4},
"aa"+"vd"="a5"="aa" *{\bullet} *+!R{5}, "aa"+"hd"="b5" *{\bullet} *+!L{5},
"aa"+"vd"="a6"="aa" *{\bullet} *+!R{6}, "aa"+"hd"="b6" *{\bullet} *+!L{6},
"aa"+"vd"="a7"="aa" *{\bullet} *+!R{7}, "aa"+"hd"="b7" *{\bullet} *+!L{7},
"a1"="aa"; "aa"+(26,15.6)="aa" **@{-}, "aa"+(4,2.4)="aa";
    "aa"+(4,2.4)="aa" **@{-}, "aa"+(3,1.8)="aa";
    "aa"+(4,2.4)="aa" **@{-}, "aa"+(3,1.8)="aa";
    "b4" **@{-},
"a2"="aa"; "aa"+(19,11.4)="aa" **@{-}, "aa"+(4,2.4)="aa";
    "aa"+(4,2.4)="aa" **@{-}, "aa"+(3,1.8)="aa";
    "aa"+(4,2.4)="aa" **@{-}, "aa"+(3,1.8)="aa";
    "b5" **@{-},
"a3"="aa"; "aa"+(12,7.2)="aa" **@{-}, "aa"+(4,2.4)="aa";
    "aa"+(4,2.4)="aa" **@{-}, "aa"+(3,1.8)="aa";
    "aa"+(4,2.4)="aa" **@{-}, "aa"+(3,1.8)="aa";
    "b6" **@{-},
"a4"="aa"; "aa"+(5,3)="aa" **@{-}, "aa"+(4,2.4)="aa";
    "aa"+(4,2.4)="aa" **@{-}, "aa"+(3,1.8)="aa";
    "aa"+(4,2.4)="aa" **@{-}, "aa"+(3,1.8)="aa";
    "b7" **@{-},
"a5"; "b1" **@{-},
"a6"; "b2" **@{-},
"a7"; "b3" **@{-},
\end{xy}\\
\mbox{\normalsize (a) $X_{A,B}$} &&
\mbox{\normalsize (b) $X_{T,A}X_{A,L}$}&&
\mbox{\normalsize (c) $X_{T,L}$}
\rule{0pt}{2em}
\end{array}
$$
\caption{$X_{A,B}$, $X_{T,A}X_{A,L}$ and $X_{T,L}$
with $A=\{2,4,6\}$, $B=\{1,2,5\}$, $T=\{5,6,7\}$ and $L=\{1,2,3\}$.}
\label{F:X}
\end{figure}
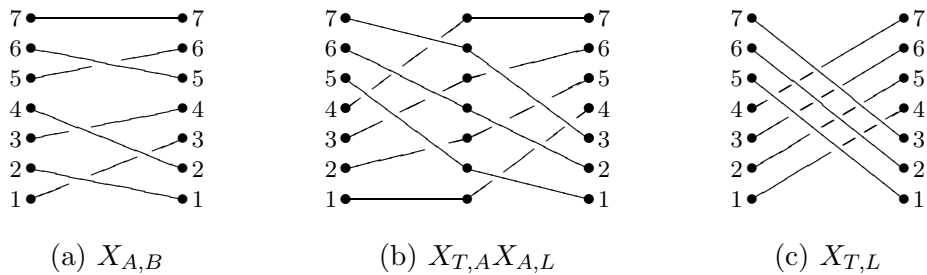

\begin{definition}
For $\alpha\in B_k$ and $\beta\in B_m$,
the notation $\alpha\splitBr\beta$ denotes
the $(k+m)$-braid obtained by stacking $\beta$ on top of $\alpha$ as
{\footnotesize \begin{xy}/r.8mm/:
(0,-1)="or", (50,0)="hd", (0,10)="vd",
(3,0)="hda", (0,2)="vda",
(4,0)="hd2", (0,1)="vd2",
"or"="aa",
"aa"-"hda"-"vda";
"aa"+"hda"-"vda" **@{-};
"aa"+"hda"+"vda" **@{-};
"aa"-"hda"+"vda" **@{-};
"aa"-"hda"-"vda" **@{-},
"aa" *{\alpha},
"aa"-"hda"-"vda"="bb",
"bb"+"vd2"="bb"; "bb"-"hd2" **@{-}, "bb"+"hda"+"hda"="cc"; "cc"+"hd2" **@{-},
"bb"+"vd2"="bb"; "bb"-"hd2" **@{-}, "bb"+"hda"+"hda"="cc"; "cc"+"hd2" **@{-},
"bb"+"vd2"="bb"; "bb"-"hd2" **@{-}, "bb"+"hda"+"hda"="cc"; "cc"+"hd2" **@{-},
"aa"+(0,5)="aa",
"aa"-"hda"-"vda";
"aa"+"hda"-"vda" **@{-};
"aa"+"hda"+"vda" **@{-};
"aa"-"hda"+"vda" **@{-};
"aa"-"hda"-"vda" **@{-},
"aa" *{\beta},
"aa"-"hda"-"vda"="bb",
"bb"+"vd2"="bb"; "bb"-"hd2" **@{-}, "bb"+"hda"+"hda"="cc"; "cc"+"hd2" **@{-},
"bb"+"vd2"="bb"; "bb"-"hd2" **@{-}, "bb"+"hda"+"hda"="cc"; "cc"+"hd2" **@{-},
"bb"+"vd2"="bb"; "bb"-"hd2" **@{-}, "bb"+"hda"+"hda"="cc"; "cc"+"hd2" **@{-},
\end{xy}
}.
Equivalently, if $\alpha$ and $\beta$ are represented by words
$W_1(\sigma_1,\ldots,\sigma_{k-1})$
and $W_2(\sigma_1,\ldots,\sigma_{m-1})$ respectively,
then $\alpha\splitBr\beta$ is represented by the word
$W_1(\sigma_1,\ldots,\sigma_{k-1})
W_2(\sigma_{k+1},\ldots,\sigma_{k+m-1})$.
\end{definition}

The following lemma is obvious. See Figure~\ref{F:spC}.

\begin{lemma}\label{L:spX}
Let $L=\{1,\ldots,k\}$ and $T=\{n-i:0\le i\le k-1\}$.
Then for $\alpha\in B_k$ and $\beta\in B_{n-k}$,
$$
X_{T,L}(\alpha\splitBr \beta)
=(\beta\splitBr\alpha) X_{T,L}.
$$
\end{lemma}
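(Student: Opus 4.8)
The plan is to verify the identity $X_{T,L}(\alpha\splitBr\beta)=(\beta\splitBr\alpha)X_{T,L}$ by a direct geometric argument on braid diagrams, using the explicit description of $X_{T,L}=P(\pi_{T,L})$. Recall that $L=\{1,\ldots,k\}$ and $T=\{n-k+1,\ldots,n\}$, so the permutation $\pi_{T,L}$ sends $L$ to $T$ order-preservingly and $\bar L=\{k+1,\ldots,n\}$ to $\bar T=\{1,\ldots,n-k\}$ order-preservingly. In other words, $X_{T,L}$ is the positive permutation braid that, read from bottom to top, moves the bottom block of $k$ strands (heights $1,\ldots,k$) up above the top block of $n-k$ strands (heights $k+1,\ldots,n$), each block sliding rigidly past the other with every strand of the lower block crossing over every strand of the upper block exactly once. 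This is the ``block transposition'' permutation braid.

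First I would set up the comparison by showing both sides are permutation braids and then invoke Lemma~\ref{L:perm} (equivalently, the one-to-one correspondence between permutation braids and permutations), so it suffices to check that the two sides have the same induced permutation. For the left side: $\alpha\splitBr\beta$ lives on strands, with $\alpha$ on heights $1,\ldots,k$ and $\beta$ on heights $k+1,\ldots,n$; its induced permutation is $\pi_\alpha$ on $\{1,\ldots,k\}$ together with a shifted copy of $\pi_\beta$ on $\{k+1,\ldots,n\}$. Composing with $X_{T,L}$ (which relocates the first block to the top and the second block to the bottom, order-preservingly) conjugates the picture: the net effect on induced permutations is that $\pi_\beta$ now acts on the bottom $n-k$ positions and $\pi_\alpha$ on the top $k$ positions, which is exactly the induced permutation of $(\beta\splitBr\alpha)X_{T,L}$. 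The bookkeeping is just composing $\pi_{T,L}$ with the block-diagonal permutation on each side and checking the two composites agree.

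Actually, for a cleaner writeup I would argue at the level of braids rather than permutations, since the statement is geometrically transparent: in the diagram for $X_{T,L}$ the two blocks of strands cross as rigid bundles with no internal crossings, so one may freely slide the box labeled $\alpha$ (resp.\ $\beta$) along its bundle of $k$ (resp.\ $n-k$) parallel strands from below the crossing region to above it, and vice versa, using only the far-commutativity relations $\sigma_i\sigma_j=\sigma_j\sigma_i$ for $|i-j|\ge2$ together with the relations of Lemma~\ref{L:DEU}(i) type (a generator commutes past a parallel bundle by shifting index). Doing this for every generator in the words $W_1,W_2$ representing $\alpha,\beta$ transforms $X_{T,L}(\alpha\splitBr\beta)$ into $(\beta\splitBr\alpha)X_{T,L}$ letter by letter. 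I expect the main (though still routine) obstacle to be making precise the index-shift bookkeeping: when $\beta$ passes through the crossing block its generators $\sigma_i$ ($1\le i\le n-k-1$, acting originally on heights $k+1,\ldots,n$) must be re-expressed on heights $1,\ldots,n-k$, and symmetrically for $\alpha$; the cleanest justification of that shift is precisely the permutation-braid uniqueness argument of the previous paragraph, so in the final proof I would likely state the geometric picture (with reference to Figure~\ref{F:spC}) and then close the gap by citing Lemma~\ref{L:perm}.
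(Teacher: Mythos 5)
Your geometric picture is exactly the one the paper has in mind (the paper gives no written proof at all: it declares the lemma obvious and points to Figure~\ref{F:spC}), and the letter-by-letter sliding idea is a sound way to make it precise. The genuine gap is in the step you propose for closing the argument rigorously: you assert that both sides of the identity are permutation braids, so that by Lemma~\ref{L:perm} it suffices to compare induced permutations. That is false for general $\alpha\in B_k$, $\beta\in B_{n-k}$: these are arbitrary braids (they may contain inverse generators or repeated crossings such as $\sigma_1^2$, and the lemma is indeed applied later with arbitrary $\alpha,\beta$, e.g.\ in Lemma~\ref{L:X}(iii)), so $X_{T,L}(\alpha\splitBr\beta)$ is in general not a permutation braid, and agreement of induced permutations proves nothing --- any nontrivial pure braid has the same induced permutation as the identity. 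Since at the end you again lean on this same comparison to justify the index shift (``close the gap by citing Lemma~\ref{L:perm}''), the proof as written does not go through.

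The fix is small and preserves your strategy: apply the permutation-braid comparison only to single generators. For $1\le i\le k-1$ and $1\le j\le n-k-1$ one needs $X_{T,L}\,\sigma_i=\sigma_{n-k+i}\,X_{T,L}$ and $X_{T,L}\,\sigma_{k+j}=\sigma_j\,X_{T,L}$. In each of these identities both sides are positive braids in which every pair of strands crosses at most once (the two strands crossed by the extra generator lie in the same block of $X_{T,L}$ and hence do not cross inside it), so both sides are permutation braids; they have the same induced permutation, hence are equal --- this is exactly the style of argument used for Lemma~\ref{L:TA}. Since $\gamma\mapsto X_{T,L}\,\gamma\,X_{T,L}^{-1}$ is a group homomorphism, these generator relations immediately yield $X_{T,L}(\alpha\splitBr\beta)X_{T,L}^{-1}=\beta\splitBr\alpha$ for arbitrary words in the generators and their inverses, which is the lemma.
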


\begin{figure}\footnotesize
$$
\begin{array}{ccc}
\begin{xy}/r.33mm/:
(0,0)="or"="aa"="a1", (100,0)="hdL", (20,0)="hd",
"aa"+(0,10)="aa"="a2",
"aa"+(0,10)="aa"="a3",
"aa"+(0,10)="aa"="a4",
"aa"+(0,20)="aa"="a5",
"aa"+(0,10)="aa"="a6",
"aa"+(0,10)="aa"="a7",
"a1"+"hdL"="aa"="b1",
"aa"+(0,10)="aa"="b2",
"aa"+(0,10)="aa"="b3",
"aa"+(0,20)="aa"="b4",
"aa"+(0,10)="aa"="b5",
"aa"+(0,10)="aa"="b6",
"aa"+(0,10)="aa"="b7",
"a7"="aa"; "aa"+"hd"+(6,0)="aa" **@{-}; "aa"+(50,-50)="aa" **@{-}; "b3" **@{-},
"a6"="aa"; "aa"+"hd"+(3,0)="aa" **@{-}; "aa"+(50,-50)="aa" **@{-}; "b2" **@{-},
"a5"="aa"; "aa"+"hd"="aa" **@{-}; "aa"+(50,-50)="aa" **@{-}; "b1" **@{-},
"a4"="aa"; "aa"+"hd"="aa" **@{-};
    "aa"+(8,8)="aa" **@{-},
    "aa"+(3,3)="aa"; "aa"+(4,4)="aa" **@{-},
    "aa"+(3,3)="aa"; "aa"+(4,4)="aa" **@{-},
    "aa"+(3,3)="aa";
    "aa"+(15,15)="aa" **@{-};
    "b7" **@{-},
"a3"="aa"; "aa"+"hd"+(3,0)="aa" **@{-};
    "aa"+(12,12)="aa" **@{-},
    "aa"+(3,3)="aa"; "aa"+(4,4)="aa" **@{-},
    "aa"+(2,2)="aa"; "aa"+(4,4)="aa" **@{-},
    "aa"+(3,3)="aa";
    "aa"+(12,12)="aa" **@{-};
    "b6" **@{-},
"a2"="aa"; "aa"+"hd"+(6,0)="aa" **@{-};
    "aa"+(15,15)="aa" **@{-},
    "aa"+(3,3)="aa"; "aa"+(4,4)="aa" **@{-},
    "aa"+(3,3)="aa"; "aa"+(4,4)="aa" **@{-},
    "aa"+(3,3)="aa";
    "aa"+(8,8)="aa" **@{-};
    "b5" **@{-},
"a1"="aa"; "aa"+"hd"+(9,0)="aa" **@{-};
    "aa"+(19,19)="aa" **@{-},
    "aa"+(3,3)="aa"; "aa"+(4,4)="aa" **@{-},
    "aa"+(2,2)="aa"; "aa"+(4,4)="aa" **@{-},
    "aa"+(3,3)="aa";
    "aa"+(5,5)="aa" **@{-};
    "b4" **@{-},
(7,0)="hd", (0,14)="vd",
"b2"+"hd"="aa" *{\alpha},
    "aa"-"hd"-"vd";
    "aa"+"hd"-"vd" **@{-};
    "aa"+"hd"+"vd" **@{-};
    "aa"-"hd"+"vd" **@{-};
    "aa"-"hd"-"vd" **@{-};
(7,0)="hd", (0,19)="vd",
"b5"+(0,5)+"hd"="aa" *{\beta},
    "aa"-"hd"-"vd";
    "aa"+"hd"-"vd" **@{-};
    "aa"+"hd"+"vd" **@{-};
    "aa"-"hd"+"vd" **@{-};
    "aa"-"hd"-"vd" **@{-};
(15,0)="bb",
"b1"+"hd"+"hd"="aa"; "aa"+"bb" **@{-},
"b2"+"hd"+"hd"="aa"; "aa"+"bb" **@{-},
"b3"+"hd"+"hd"="aa"; "aa"+"bb" **@{-},
"b4"+"hd"+"hd"="aa"; "aa"+"bb" **@{-},
"b5"+"hd"+"hd"="aa"; "aa"+"bb" **@{-},
"b6"+"hd"+"hd"="aa"; "aa"+"bb" **@{-},
"b7"+"hd"+"hd"="aa"; "aa"+"bb" **@{-},
\end{xy}
&\qquad&
\qquad
\begin{xy}/r.33mm/:
(0,0)="or"="aa"="a1", (100,0)="hdL", (20,0)="hd",
"aa"+(0,10)="aa"="a2",
"aa"+(0,10)="aa"="a3",
"aa"+(0,10)="aa"="a4",
"aa"+(0,20)="aa"="a5",
"aa"+(0,10)="aa"="a6",
"aa"+(0,10)="aa"="a7",
"a1"+"hdL"="aa"="b1",
"aa"+(0,10)="aa"="b2",
"aa"+(0,10)="aa"="b3",
"aa"+(0,20)="aa"="b4",
"aa"+(0,10)="aa"="b5",
"aa"+(0,10)="aa"="b6",
"aa"+(0,10)="aa"="b7",
"a7"="aa"; "aa"+"hd"+(6,0)="aa" **@{-}; "aa"+(50,-50)="aa" **@{-}; "b3" **@{-},
"a6"="aa"; "aa"+"hd"+(3,0)="aa" **@{-}; "aa"+(50,-50)="aa" **@{-}; "b2" **@{-},
"a5"="aa"; "aa"+"hd"="aa" **@{-}; "aa"+(50,-50)="aa" **@{-}; "b1" **@{-},
"a4"="aa"; "aa"+"hd"="aa" **@{-};
    "aa"+(8,8)="aa" **@{-},
    "aa"+(3,3)="aa"; "aa"+(4,4)="aa" **@{-},
    "aa"+(3,3)="aa"; "aa"+(4,4)="aa" **@{-},
    "aa"+(3,3)="aa";
    "aa"+(15,15)="aa" **@{-};
    "b7" **@{-},
"a3"="aa"; "aa"+"hd"+(3,0)="aa" **@{-};
    "aa"+(12,12)="aa" **@{-},
    "aa"+(3,3)="aa"; "aa"+(4,4)="aa" **@{-},
    "aa"+(2,2)="aa"; "aa"+(4,4)="aa" **@{-},
    "aa"+(3,3)="aa";
    "aa"+(12,12)="aa" **@{-};
    "b6" **@{-},
"a2"="aa"; "aa"+"hd"+(6,0)="aa" **@{-};
    "aa"+(15,15)="aa" **@{-},
    "aa"+(3,3)="aa"; "aa"+(4,4)="aa" **@{-},
    "aa"+(3,3)="aa"; "aa"+(4,4)="aa" **@{-},
    "aa"+(3,3)="aa";
    "aa"+(8,8)="aa" **@{-};
    "b5" **@{-},
"a1"="aa"; "aa"+"hd"+(9,0)="aa" **@{-};
    "aa"+(19,19)="aa" **@{-},
    "aa"+(3,3)="aa"; "aa"+(4,4)="aa" **@{-},
    "aa"+(2,2)="aa"; "aa"+(4,4)="aa" **@{-},
    "aa"+(3,3)="aa";
    "aa"+(5,5)="aa" **@{-};
    "b4" **@{-},
(7,0)="hd", (0,14)="vd",
"a6"-"hd"="aa" *{\alpha},
    "aa"-"hd"-"vd";
    "aa"+"hd"-"vd" **@{-};
    "aa"+"hd"+"vd" **@{-};
    "aa"-"hd"+"vd" **@{-};
    "aa"-"hd"-"vd" **@{-};
(7,0)="hd", (0,19)="vd",
"a2"+(0,5)-"hd"="aa" *{\beta},
    "aa"-"hd"-"vd";
    "aa"+"hd"-"vd" **@{-};
    "aa"+"hd"+"vd" **@{-};
    "aa"-"hd"+"vd" **@{-};
    "aa"-"hd"-"vd" **@{-};
(15,0)="bb",
"a1"-"hd"-"hd"="aa"; "aa"-"bb" **@{-},
"a2"-"hd"-"hd"="aa"; "aa"-"bb" **@{-},
"a3"-"hd"-"hd"="aa"; "aa"-"bb" **@{-},
"a4"-"hd"-"hd"="aa"; "aa"-"bb" **@{-},
"a5"-"hd"-"hd"="aa"; "aa"-"bb" **@{-},
"a6"-"hd"-"hd"="aa"; "aa"-"bb" **@{-},
"a7"-"hd"-"hd"="aa"; "aa"-"bb" **@{-},
\end{xy}\\
\mbox{\normalsize (a) $X_{T,L}(\alpha\splitBr \beta)$} &&
\mbox{\normalsize (b) $(\beta\splitBr\alpha) X_{T,L}$}\rule{0pt}{2em}
\end{array}
$$
\caption{$X_{T,L}(\alpha\splitBr \beta)=(\beta\splitBr\alpha) X_{T,L}$,
where $L=\{1,2,3\}$ and $T=\{5,6,7\}$}
\label{F:spC}
\end{figure}

The following lemma is well known to experts.
For completeness, we include a simple proof.

\begin{lemma}\label{L:Pdec}
Let $\pi$ be an $n$-permutation,
and let $1\le k\le n$.
Then there is a decomposition
$$P(\pi)=(P_1\splitBr P_2) X_{L,A},$$
where $P_1\in B_k$ and $P_2\in B_{n-k}$ are permutation braids,
$L=\{1,\ldots,k\}$ and $A=\pi^{-1}(L)$.
\end{lemma}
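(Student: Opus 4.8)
The plan is to reduce the statement to the bijection between permutation $n$-braids and $n$-permutations together with Lemma~\ref{L:perm}, so that the only thing to prove is a combinatorial fact about permutations. Write $A=\pi^{-1}(L)$ and set
$$\rho=\pi\circ\pi_{A,L},$$
where $\pi_{A,L}=\pi_{L,A}^{-1}$. The first point is that $\rho$ preserves the block structure $\{1,\dots,k\}\sqcup\{k+1,\dots,n\}$: the permutation $\pi_{A,L}$ carries $L$ onto $A$ and $\bar L$ onto $\bar A$ (order preservingly), while $\pi$ carries $A=\pi^{-1}(L)$ back onto $L$ and $\bar A$ onto $\bar L$, so $\rho(L)=L$ and $\rho(\bar L)=\bar L$. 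Hence $\rho$ is determined by a permutation $\rho_1\in S_k$ of $\{1,\dots,k\}$ and a permutation $\rho_2\in S_{n-k}$ given by $\rho_2(j)=\rho(j+k)-k$. Put $P_1=P(\rho_1)\in B_k$ and $P_2=P(\rho_2)\in B_{n-k}$; from the word description of $\splitBr$ it is immediate that $P_1\splitBr P_2$ has induced permutation $\rho$.

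Next I compare the two braids. Using that braid multiplication composes induced permutations (the same convention behind $X_{T,A}X_{A,L}=X_{T,L}$ in Lemma~\ref{L:TA}), the braid $(P_1\splitBr P_2)X_{L,A}$ has induced permutation
$$\rho\circ\pi_{L,A}=\pi\circ\pi_{A,L}\circ\pi_{L,A}=\pi.$$
Thus $(P_1\splitBr P_2)X_{L,A}$ and $P(\pi)$ are braids with the same induced permutation, and $P(\pi)$ is by definition a permutation braid. So by Lemma~\ref{L:perm} (applied to $P=P(\pi)$ and $Q=(P_1\splitBr P_2)X_{L,A}$, whose quotient $Q^{-1}P$ is then pure) it suffices to show that $(P_1\splitBr P_2)X_{L,A}$ is itself a permutation braid.

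This last step is the heart of the matter, and I expect it to be the only real obstacle; the rest is bookkeeping with the left-action convention and the definitions of $\pi_{A,B}$, $X_{A,B}$ and $\splitBr$. The braid $(P_1\splitBr P_2)X_{L,A}$ is visibly positive, so I only need to check that no two strands cross more than once, and this can be read off the picture exactly as in the proof of Lemma~\ref{L:TA}. In $X_{L,A}$ the strands whose endpoints on the $X_{L,A}$-side lie in $A$ are routed to the first $k$ positions along the common boundary with $P_1\splitBr P_2$, the strands with endpoints in $\bar A$ are routed to the last $n-k$ positions, and within $A$ and within $\bar A$ the relative order of strands is preserved; consequently a pair of strands that crosses inside $X_{L,A}$ consists of one $A$-strand and one $\bar A$-strand, and such a pair is separated into the two blocks $\{1,\dots,k\}$ and $\{k+1,\dots,n\}$ and hence never crosses inside the block braid $P_1\splitBr P_2$, while a pair that crosses inside $P_1\splitBr P_2$ lies in a single block, so both of its strands came from $A$ or both from $\bar A$ and therefore did not cross inside $X_{L,A}$. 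Equivalently, one can argue numerically: since $\pi_{L,A}^{-1}=\pi_{A,L}$ is increasing on $\{1,\dots,k\}$ and on $\{k+1,\dots,n\}$, $\pi_{L,A}$ is the shortest element of the coset $(S_k\times S_{n-k})\pi_{L,A}$, so the number of inversions of $\pi=\rho\circ\pi_{L,A}$ equals that of $\rho$ plus that of $\pi_{L,A}$; the concatenated positive word then has length equal to the inversion number of its underlying permutation, which forces it to be a permutation braid.
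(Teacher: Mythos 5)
Your proof is correct, and it takes a genuinely different route from the paper's. The paper proves Lemma~\ref{L:Pdec} by counting: it defines $\phi(P_1,P_2,A)=(P_1\splitBr P_2)X_{L,A}$ on the set $\mathcal A_{n,k}$ of triples, checks (as you do, though only in one sentence) that the image is a permutation braid because it is positive with each pair of strands crossing at most once, notes injectivity by looking at induced permutations, and concludes surjectivity from $|\mathcal A_{n,k}|=k!\,(n-k)!\,{n\choose k}=n!=|\mathcal P_n|$; the identification $A=\pi^{-1}(L)$ is then implicit from the induced permutation. You instead construct the decomposition directly: you form $\rho=\pi\circ\pi_{A,L}$, observe that it preserves the blocks $\{1,\ldots,k\}$ and $\{k+1,\ldots,n\}$, take $P_1,P_2$ to be its two blocks, verify that the induced permutation of $(P_1\splitBr P_2)X_{L,A}$ is $\pi$ (your use of the composition convention is consistent with Lemma~\ref{L:TA}), and reduce equality with $P(\pi)$ to Lemma~\ref{L:perm} once you know the product is a permutation braid, which you establish by the crossing analysis (one crossing class inside $X_{L,A}$ mixes an $A$-strand with a $\bar A$-strand, the other stays inside a block, and the two classes are disjoint) or, alternatively, by the additivity of inversion numbers for the minimal coset representative $\pi_{L,A}$ of $(S_k\times S_{n-k})\pi_{L,A}$. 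Both arguments hinge on the same key verification that no pair of strands of $(P_1\splitBr P_2)X_{L,A}$ crosses twice; what your version buys is an explicit description of $P_1$ and $P_2$ as the blocks of $\pi\circ\pi_{A,L}$ and a proof that works for the given $\pi$ without any cardinality count, while the paper's bijection argument delivers existence and uniqueness of the triple simultaneously and avoids both the block-structure check and the appeal to Lemma~\ref{L:perm}.
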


Figure~\ref{f:P} shows an example of the above decomposition.

\begin{proof}
Let $\mathcal A_{n,k}$ be the set of all triples $(P_1,P_2,A)$
such that $P_1$ and $P_2$ are permutation braids in $B_k$ and in $B_{n-k}$,
respectively, and $A$ is a $k$-subset of $\{1,\ldots,n\}$.
Let $\mathcal P_n$ be the set of permutation $n$-braids.
Notice that $|\mathcal A_{n,k}|=k!\times (n-k)!\times{n\choose k}=n!=|\mathcal P_n|$.

Define $\phi:\mathcal A_{n,k}\to \mathcal P_n$
by $\phi(P_1,P_2,A)=(P_1\splitBr P_2) X_{L,A}$.
The braid $(P_1\splitBr P_2) X_{L,A}$ is a permutation braid
because it is a positive braid such that each pair of strands cross at most once.
Hence $\phi$ is well defined.
Considering induced permutations, it is easy to see that $\phi$ is injective.
Therefore $\phi$ is a bijection from $\mathcal A_{n,k}$ to $\mathcal P_n$,
hence any permutation $n$-braid $P(\pi)$
has the desired decomposition $P(\pi)=(P_1\splitBr P_2) X_{L,A}$.
\end{proof}

\begin{figure}\footnotesize
$$
\begin{xy}/r.4mm/:
(0,0)="or", (50,0)="hd", (5,0)="hda", (0,10)="vd",
     "or"="a1"="aa" *{\bullet} *+!R{1}, "aa"+"hd"="b1", "aa"+"hd"+"hd"="c1"="bb"; "bb"+(10,0) **@{-} *{\bullet} *+!L{1},
"aa"+"vd"="a2"="aa" *{\bullet} *+!R{2}, "aa"+"hd"="b2", "aa"+"hd"+"hd"="c2"="bb"; "bb"+(10,0) **@{-} *{\bullet} *{\bullet} *+!L{2},
"aa"+"vd"="a3"="aa" *{\bullet} *+!R{3}, "aa"+"hd"="b3", "aa"+"hd"+"hd"="c3"="bb"; "bb"+(10,0) **@{-} *{\bullet} *{\bullet} *+!L{3},
"aa"+"vd"="a4"="aa" *{\bullet} *+!R{4}, "aa"+"hd"="b4", "aa"+"hd"+"hd"="c4"="bb"; "bb"+(10,0) **@{-} *{\bullet} *{\bullet} *+!L{4},
"aa"+"vd"="a5"="aa" *{\bullet} *+!R{5}, "aa"+"hd"="b5", "aa"+"hd"+"hd"="c5"="bb"; "bb"+(10,0) **@{-} *{\bullet} *{\bullet} *+!L{5},
"aa"+"vd"="a6"="aa" *{\bullet} *+!R{6}, "aa"+"hd"="b6", "aa"+"hd"+"hd"="c6"="bb"; "bb"+(10,0) **@{-} *{\bullet} *{\bullet} *+!L{6},
"aa"+"vd"="a7"="aa" *{\bullet} *+!R{7}, "aa"+"hd"="b7", "aa"+"hd"+"hd"="c7"="bb"; "bb"+(10,0) **@{-} *{\bullet} *{\bullet} *+!L{7},
"a1"="aa"; "aa"+"hda"+(9,0)="aa" **@{-};
    "aa"+(2,4)="aa" **@{-},
    "aa"+(2,4)="aa";
    "aa"+(2,4)="aa" **@{-};
    "aa"+(2,4)="aa";
    "aa"+(2,4)="aa" **@{-};
    "b3" **@{-},
"a2"="aa"; "aa"+"hda"+(8,0)="aa" **@{-};
    "aa"+(10,-10)="aa" **@{-}; "b1" **@{-},
"a3"="aa"; "aa"+"hda"+(10,0)="aa" **@{-};
    "aa"+(10,-10)="aa" **@{-}; "b2" **@{-},
"a4"="aa"; "aa"+"hda"+(20,0)="aa" **@{-};
    "aa"+(2,4)="aa" **@{-},
    "aa"+(1,2)="aa";
    "aa"+(2,4)="aa" **@{-};
    "b5" **@{-},
"a5"="aa"; "aa"+"hda"+(9,0)="aa" **@{-};
    "aa"+(2,4)="aa" **@{-},
    "aa"+(2,4)="aa";
    "aa"+(2,4)="aa" **@{-};
    "aa"+(2,4)="aa";
    "aa"+(2,4)="aa" **@{-};
    "b7" **@{-},
"a6"="aa"; "aa"+"hda"+(8,0)="aa" **@{-};
    "aa"+(20,-20)="aa" **@{-}; "b4" **@{-},
"a7"="aa"; "aa"+"hda"+(10,0)="aa" **@{-};
    "aa"+(10,-10)="aa" **@{-}; "b6" **@{-},
"b7"; "c7" **@{-},
"b6"; "c4" **@{-},
"b5"; "c3" **@{-},
"b4"; "c1" **@{-},
"b3"="aa"; "aa"+(5,3)="aa" **@{-},
    "aa"+(5,3)="aa"; "aa"+(7.5,4.5)="aa" **@{-},
    "aa"+(5,3)="aa"; "aa"+(5,3)="aa" **@{-},
    "aa"+(5,3)="aa"; "c6" **@{-},
"b2"="aa"; "aa"+(15,9)="aa" **@{-},
    "aa"+(5,3)="aa"; "aa"+(7.5,4.5)="aa" **@{-},
    "aa"+(5,3)="aa"; "aa"+(5,3)="aa" **@{-},
    "aa"+(5,3)="aa"; "c5" **@{-},
"b1"="aa"; "aa"+(35,7)="aa" **@{-}, "aa"+(5,1)="aa"; "c2" **@{-},
"a2"+"hda"+(17,0)="B2",
    "B2"+(-12,-15);
    "B2"+( 12,-15) **@{-};
    "B2"+( 12, 13) **@{-};
    "B2"+(-12, 13) **@{-};
    "B2"+(-12,-15) **@{-};
    "B2"+(0,-14) *++!U{P_1},
"a5"+"hda"+(17,5)="B2",
    "B2"+(-12,-18);
    "B2"+( 12,-18) **@{-};
    "B2"+( 12, 20) **@{-};
    "B2"+(-12, 20) **@{-};
    "B2"+(-12,-18) **@{-};
    "B2"+(0,20) *+!D{P_2},
"b4"+(25,0)="B2",
    "B2"+(-28,-35);
    "B2"+( 28,-35) **@{-};
    "B2"+( 28, 35) **@{-};
    "B2"+(-28, 35) **@{-};
    "B2"+(-28,-35) **@{-};
    "B2"+(0,-37) *+!U{X_{L,A}},
\end{xy}
$$
\caption{The permutation 7-braid $P(\pi)$ with $\pi=(6,2,4,7,3,1,5)$
has decomposition $P(\pi)=(P_1\splitBr P_2)X_{L,A}$
with $P_1=\sigma_1\sigma_2$, $P_2=\sigma_2\sigma_3\sigma_1$,
$L=\{1,2,3\}$ and $A=\{2,5,6\}$.}
\label{f:P}
\end{figure}
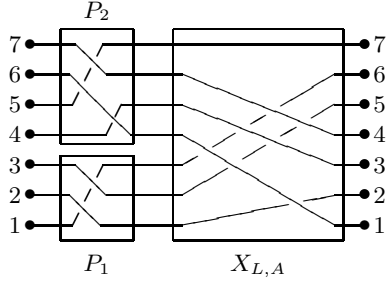

\begin{lemma}\label{L:X}
Let $L=\{1,\ldots,k\}$ and $T=\{n-i:0\le i\le k-1\}$.
The following hold.
\begin{enumerate}
\item[(i)] For $A=\{a_1,\ldots,a_k\}$ with $1\le a_1<\cdots<a_k\le n$,
\begin{align*}
& D_{a_1}\cdots D_{a_k}=X_{A,L}(\Delta_k\splitBr 1),\\
&E_{a_k}\cdots E_{a_1}=(\Delta_k\splitBr 1)X_{L,A},\\
&U(A)=X_{A,L} (\Delta_k^2\splitBr 1) X_{L,A}.
\end{align*}
\item[(ii)] $\delta^k=X_{T,L} (\Delta_k^2\splitBr 1)$.
\item[(iii)] $\delta^k(\alpha\splitBr \beta)=(\beta\splitBr\alpha)\delta^k$
for any $\alpha\in B_k$ and $\beta\in B_{n-k}$.
\end{enumerate}
\end{lemma}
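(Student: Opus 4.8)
Throughout, write $L=\{1,\dots,k\}$ and $T=\{n-k+1,\dots,n\}$. The plan is to deduce all of Lemma~\ref{L:X} from the first identity of (i) (the one for $D_{a_1}\cdots D_{a_k}$) together with one short computation inside $B_n$. The remaining parts are then formal. The identity $E_{a_k}\cdots E_{a_1}=(\Delta_k\splitBr 1)X_{L,A}$ follows by applying the word-reversing anti-automorphism $\rho$ of $B_n$ (well defined since the braid relations are invariant under reversing words): $\rho$ fixes each $\sigma_i$, hence $\rho(D_j)=E_j$, $\rho(\Delta_k)=\Delta_k$, $\rho(\alpha\splitBr\beta)=\rho(\alpha)\splitBr\rho(\beta)$ and $\rho(P(\pi))=P(\pi^{-1})$, so $\rho(X_{A,L})=X_{L,A}$ and $\rho\big(X_{A,L}(\Delta_k\splitBr 1)\big)=(\Delta_k\splitBr 1)X_{L,A}$. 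The identity for $U(A)$ then follows from Lemma~\ref{L:DEU}(iv), since
$$U(A)=U_{a_1}\cdots U_{a_k}=(D_{a_1}\cdots D_{a_k})(E_{a_k}\cdots E_{a_1})=X_{A,L}(\Delta_k\splitBr 1)(\Delta_k\splitBr 1)X_{L,A}=X_{A,L}(\Delta_k^2\splitBr 1)X_{L,A}.$$
Finally (iii) follows from (ii): using that $\Delta_k^2$ is central in $B_k$, that $(\Delta_k^2\splitBr 1)(\alpha\splitBr\beta)=(\alpha\Delta_k^2)\splitBr\beta=(\alpha\splitBr\beta)(\Delta_k^2\splitBr 1)$, and Lemma~\ref{L:spX},
$$\delta^k(\alpha\splitBr\beta)=X_{T,L}(\Delta_k^2\splitBr 1)(\alpha\splitBr\beta)=X_{T,L}(\alpha\splitBr\beta)(\Delta_k^2\splitBr 1)=(\beta\splitBr\alpha)X_{T,L}(\Delta_k^2\splitBr 1)=(\beta\splitBr\alpha)\delta^k.$$

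For the identity $D_{a_1}\cdots D_{a_k}=X_{A,L}(\Delta_k\splitBr 1)$ I would show that both sides are permutation braids with the same induced permutation and invoke Lemma~\ref{L:perm}. The braid $(\Delta_k\splitBr 1)X_{L,A}$ has the form $(P_1\splitBr P_2)X_{L,A}$ with $P_1=\Delta_k$, $P_2=1$, hence is a permutation braid by the argument proving Lemma~\ref{L:Pdec}, and applying $\rho$ shows $X_{A,L}(\Delta_k\splitBr 1)$ is one as well. The product $D_{a_1}\cdots D_{a_k}$ is a positive braid of word length $\sum_{i=1}^{k}(a_i-1)$; a direct calculation shows its induced permutation $\pi$ sends $j\mapsto a_{k+1-j}$ for $1\le j\le k$ and maps $\{k+1,\dots,n\}$ increasingly onto $\bar A$, and such a $\pi$ has exactly $\binom{k}{2}+\sum_{i=1}^{k}(a_i-i)=\sum_{i=1}^{k}(a_i-1)$ inversions, equal to the word length, so $D_{a_1}\cdots D_{a_k}$ is a permutation braid. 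Since the induced permutation of $X_{A,L}(\Delta_k\splitBr 1)$ is $\pi_{A,L}$ composed (on the right) with the permutation reversing $\{1,\dots,k\}$ and fixing $\{k+1,\dots,n\}$, it sends $j\mapsto a_{k+1-j}$ for $j\le k$ and is the increasing bijection onto $\bar A$ otherwise, i.e.\ it is the same $\pi$; hence the two braids agree.

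The heart of the proof, and the step I expect to be the main obstacle, is (ii), because $\delta^k$ is not a permutation braid for large $k$, so Lemma~\ref{L:perm} is unavailable. Writing $\Delta_k\splitBr 1=D_2D_3\cdots D_k$ and using the first identity of (i) with $A=T$, one gets $X_{T,L}(\Delta_k^2\splitBr 1)=(D_{n-k+1}\cdots D_n)(D_2\cdots D_k)=:R_k$, so (ii) is equivalent to $\delta^k=R_k$; this I would prove by induction on $k$, the cases $k=0,1$ reading $R_0=1=\delta^0$ and $R_1=D_n=\delta$. As $R_{k+1}=D_{n-k}\,R_k\,D_{k+1}$, the inductive step reduces (using the hypothesis $R_k=\delta^k$) to $D_{n-k}\,\delta^k\,D_{k+1}=\delta^{k+1}=\delta^k\delta$, that is, after left-multiplying by $\delta^{-k}$, to $\tau^k(D_{n-k})\,D_{k+1}=\delta$. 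From $\tau(\sigma_i)=\sigma_{i+1}$ for $1\le i\le n-2$ — and the fact that every generator met while iterating $\tau$ on $D_{n-k}=\sigma_{n-k-1}\cdots\sigma_1$ has index at most $n-2$ — one computes $\tau^k(D_{n-k})=\sigma_{n-1}\sigma_{n-2}\cdots\sigma_{k+1}$, whence $\tau^k(D_{n-k})D_{k+1}=(\sigma_{n-1}\cdots\sigma_{k+1})(\sigma_k\cdots\sigma_1)=\delta$, closing the induction. This yields (ii), and with the reductions above, all of Lemma~\ref{L:X}.
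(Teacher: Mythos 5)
Your proposal is correct, but it reaches the statement by a genuinely different route for exactly the parts the paper treats as self-evident. The paper's proof declares the two identities $D_{a_1}\cdots D_{a_k}=X_{A,L}(\Delta_k\splitBr 1)$ and $E_{a_k}\cdots E_{a_1}=(\Delta_k\splitBr 1)X_{L,A}$ ``obvious'' from the braid pictures, reads the factorization $\delta^k=(1\splitBr\Delta_k)X_{T,L}(\Delta_k\splitBr 1)$ off a figure, and then gets (ii) by one application of Lemma~\ref{L:spX}; parts (i)\,(third identity) and (iii) are then derived exactly as you do, via Lemma~\ref{L:DEU}(iv) and Lemma~\ref{L:spX}. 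You instead make the pictorial steps algebraic: you certify $D_{a_1}\cdots D_{a_k}$ as a permutation braid by matching its positive word length $\sum(a_i-1)$ with the inversion count $\binom k2+\sum(a_i-i)$ of its induced permutation and then invoke Lemma~\ref{L:perm}; you obtain the $E$-identity from the $D$-identity via the word-reversing anti-automorphism (which fixes each $\sigma_i$, swaps $D_j\leftrightarrow E_j$, fixes $\Delta_k$, respects $\splitBr$, and sends $P(\pi)$ to $P(\pi^{-1})$); and for (ii) you avoid both the picture and Lemma~\ref{L:spX} by writing $X_{T,L}(\Delta_k^2\splitBr 1)=(D_{n-k+1}\cdots D_n)(D_2\cdots D_k)$ and proving this equals $\delta^k$ by induction on $k$, the inductive step reducing to $\tau^k(D_{n-k})D_{k+1}=(\sigma_{n-1}\cdots\sigma_{k+1})(\sigma_k\cdots\sigma_1)=\delta$, which is legitimate since every index encountered while applying $\tau(\sigma_i)=\sigma_{i+1}$ stays at most $n-2$. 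The trade-off: the paper's argument is shorter and geometric but leans on figures; yours is figure-free and fully checkable by word/permutation bookkeeping (with the $D$-identity, whose induced-permutation computation you only sketch, being the one place where a couple more lines of verification would make it airtight), at the cost of the extra machinery of the reversal map and the induction.
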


\begin{proof}
(i)\ \
See Figures~\ref{F:aa}(a,b) and \ref{F:bb}.
The first two identities are obvious.
By Lemma~\ref{L:DEU}(iv),
\begin{align*}
U(A)&=U_{a_1}\cdots U_{a_k}
=(D_{a_1}\cdots D_{a_k})(E_{a_k}\cdots E_{a_1})\\
&= X_{A,L}(\Delta_k\splitBr 1)\cdot (\Delta_k\splitBr 1)X_{L,A}
= X_{A,L}(\Delta_k^2\splitBr 1)X_{L,A}.
\end{align*}

(ii)\ \
It is obvious that
$\delta^k=(1\splitBr \Delta_k)X_{T,L}(\Delta_k\splitBr 1)$.
See Figure~\ref{F:aa}(c).
By Lemma~\ref{L:spX},
$$
\delta^k
=(1\splitBr \Delta_k)X_{T,L}(\Delta_k\splitBr 1)
=X_{T,L}(\Delta_k\splitBr 1)(\Delta_k\splitBr 1)
=X_{T,L}(\Delta_k^2\splitBr 1).
$$

(iii)\ \
By (ii) and Lemma~\ref{L:spX}, we have
\begin{align*}
\delta^k(\alpha\splitBr \beta)
& = X_{T,L}(\Delta_k^2\splitBr 1)(\alpha\splitBr \beta)
= X_{T,L}(\alpha\splitBr \beta)(\Delta_k^2\splitBr 1)\\
&= (\beta\splitBr \alpha)X_{T,L}(\Delta_k^2\splitBr 1)
=(\beta\splitBr \alpha)\delta^k.
\end{align*}
\vskip-\baselineskip
\end{proof}

\begin{figure}\footnotesize
$$
\begin{array}{ccccc}
\begin{xy}/r.35mm/:
(20,0)="hd", (-12,60)="vd", (1,-5)="td", (70,0)="hdL",
(0,0)="or"="a1"="aa",
"aa"+(0,10)="aa"="a2",
"aa"+(0,10)="aa"="a3",
"aa"+(0,10)="aa"="a4",
"aa"+(0,10)="aa"="a5",
"aa"+(0,10)="aa"="a6",
"aa"+(0,10)="aa"="a7",
"a7"+"hdL"="aa"="b7",
"aa"-(0,20)="aa"="b6",
"aa"-(0,30)="aa"="b5",
"aa"-(0,10)="aa"="b4",
"aa"-(0,15)="aa"="b3",
"aa"-(0,10)="aa"="b2",
"aa"-(0,10)="aa"="b1",
"a7"; "b7" **@{-},
"a6"="aa"; "aa"+"hd"+(14,0)="aa" **@{-};
    "aa"+(16,-80)+"td"="aa" **@{-}; "b1" **@{-},
"a5"="aa"; "aa"+"hd"+(14,0)="aa" **@{-}, "aa"+(4,0)="aa"; "b6" **@{-},
"a4"="aa"; "aa"+"hd"+(8,0)="aa" **@{-};
    "aa"+(10,-50)+"td"="aa" **@{-};
    "aa"+(8,0)="aa" **@{-}, "aa"+(4,0)="aa"; "b2" **@{-},
"a3"="aa"; "aa"+"hd"="aa" **@{-};
    "aa"+(6,-30)+"td"="aa" **@{-};
    "aa"+(8,0)="aa" **@{-}, "aa"+(4,0)="aa";
    "aa"+(6,0)="aa" **@{-}, "aa"+(4,0)="aa"; "b3" **@{-},
"a2"="aa"; "aa"+"hd"+(0,0)="aa" **@{-},
    "aa"+(4,0)="aa"; "aa"+(6,0)="aa" **@{-};
    "aa"+(4,0)="aa"; "aa"+(6,0)="aa" **@{-};
    "aa"+(4,0)="aa"; "b5" **@{-},
"a1"="aa"; "aa"+"hd"+(2,0)="aa" **@{-},
    "aa"+(4,0)="aa"; "aa"+(6,0)="aa" **@{-};
    "aa"+(4,0)="aa"; "aa"+(6,0)="aa" **@{-};
    "aa"+(4,0)="aa"; "b4" **@{-},
"a1" *{\bullet} *+!R{1},
"a2" *{\bullet} *+!R{2},
"a3" *{\bullet} *+!R{3},
"a4" *{\bullet} *+!R{4},
"a5" *{\bullet} *+!R{5},
"a6" *{\bullet} *+!R{6},
"a7" *{\bullet} *+!R{7},
"b1" *{\bullet} *+!L{1},
"b2" *{\bullet} *+!L{2},
"b3" *{\bullet} *+!L{3},
"b4" *{\bullet} *+!L{4},
"b5" *{\bullet} *+!L{5},
"b6" *{\bullet} *+!L{6},
"b7" *{\bullet} *+!L{7},
\end{xy}
&\quad&
\begin{xy}/r.35mm/:
(20,0)="hd", (-12,60)="vd", (-1,-5)="td", (70,0)="hdL",
(0,0)="or"="a1"="aa",
"aa"+(0,10)="aa"="a2",
"aa"+(0,10)="aa"="a3",
"aa"+(0,10)="aa"="a4",
"aa"+(0,10)="aa"="a5",
"aa"+(0,10)="aa"="a6",
"aa"+(0,10)="aa"="a7",
"a7"-"hdL"="aa"="b7",
"aa"-(0,20)="aa"="b6",
"aa"-(0,30)="aa"="b5",
"aa"-(0,10)="aa"="b4",
"aa"-(0,15)="aa"="b3",
"aa"-(0,10)="aa"="b2",
"aa"-(0,10)="aa"="b1",
"a7"; "b7" **@{-},
"a6"="aa"; "aa"-"hd"-(14,0)="aa" **@{-};
    "aa"-(1.5,7.5)="aa" **@{-}, "aa"-(1,5)="aa";
    "aa"+(-5,-25)="aa" **@{-}, "aa"-(1,5)="aa";
    "aa"+(-1,-5)="aa" **@{-}, "aa"-(1,5)="aa";
    "aa"+(-2,-10)="aa" **@{-}, "aa"-(1,5)="aa";
    "aa"+(-1,-5)="aa" **@{-}, "aa"-(1,5)="aa";
    "aa"+(-1.5,-7.5)="aa" **@{-}; "b1" **@{-},
"a5"="aa"; "b6" **@{-},
"a4"="aa"; "aa"-"hd"-(8,0)="aa" **@{-};
    "aa"-(3.5,17.5)="aa" **@{-}, "aa"-(1,5)="aa";
    "aa"+(-1,-5)="aa" **@{-}, "aa"-(1,5)="aa";
    "aa"+(-2,-10)="aa" **@{-}, "aa"-(1,5)="aa";
    "aa"+(-1.5,-7.5)="aa" **@{-}; "b2" **@{-},
"a3"="aa"; "aa"-"hd"="aa" **@{-};
    "aa"-(1.5,7.5)="aa" **@{-}, "aa"-(1,5)="aa";
    "aa"+(-1,-5)="aa" **@{-}, "aa"-(1,5)="aa";
    "aa"+(-2.5,-12.5)="aa" **@{-}; "b3" **@{-},
"a2"="aa"; "b5" **@{-},
"a1"="aa"; "b4" **@{-},
"b1" *{\bullet} *+!R{1},
"b2" *{\bullet} *+!R{2},
"b3" *{\bullet} *+!R{3},
"b4" *{\bullet} *+!R{4},
"b5" *{\bullet} *+!R{5},
"b6" *{\bullet} *+!R{6},
"b7" *{\bullet} *+!R{7},
"a1" *{\bullet} *+!L{1},
"a2" *{\bullet} *+!L{2},
"a3" *{\bullet} *+!L{3},
"a4" *{\bullet} *+!L{4},
"a5" *{\bullet} *+!L{5},
"a6" *{\bullet} *+!L{6},
"a7" *{\bullet} *+!L{7},
\end{xy}
&\quad&
\begin{xy}/r.35mm/:
(15,0)="hd", (-15,75)="vd",
(0,0)="or"="a1"="aa",
"aa"+(0,10)="aa"="a2",
"aa"+(0,10)="aa"="a3",
"aa"+(0,10)="aa"="a4",
"aa"+(0,10)="aa"="a5",
"aa"+(0,10)="aa"="a6",
"aa"+(0,10)="aa"="a7",
"a1"+(70,-35)="aa"="b1",
"aa"+(0,10)="aa"="b2",
"aa"+(0,10)="aa"="b3",
"aa"+(0,15)="aa"="b4",
"aa"+(0,10)="aa"="b5",
"aa"+(0,10)="aa"="b6",
"aa"+(0,10)="aa"="b7",
"a7"="aa"; "aa"+"hd"="aa" **@{-};
    "aa"-"vd"="aa" **@{-};
    "aa"+(7,0)="aa" **@{-}, "aa"+(4,0)="aa";
    "aa"+(7,0)="aa" **@{-}, "aa"+(4,0)="aa"; "b3" **@{-},
"a6"="aa"; "aa"+"hd"-(.5,0)="aa" **@{-}, "aa"+(4,0)="aa";
    "aa"+(8,0)="aa" **@{-};
    "aa"-"vd"="aa" **@{-};
    "aa"+(8,0)="aa" **@{-}, "aa"+(4,0)="aa"; "b2" **@{-},
"a5"="aa"; "aa"+"hd"+(2,0)="aa" **@{-},
    "aa"+(4,0)="aa"; "aa"+(6,0)="aa" **@{-};
    "aa"+(4,0)="aa"; "aa"+(8,0)="aa" **@{-};
    "aa"-"vd"="aa" **@{-}; "b1" **@{-},
"a4"="aa"; "aa"+"hd"+(4,0)="aa" **@{-},
    "aa"+(4,0)="aa"; "aa"+(6,0)="aa" **@{-};
    "aa"+(4,0)="aa"; "aa"+(6,0)="aa" **@{-};
    "aa"+(4,0)="aa"; "b7" **@{-},
"a3"="aa"; "aa"+"hd"+(6,0)="aa" **@{-},
    "aa"+(4,0)="aa"; "aa"+(6,0)="aa" **@{-};
    "aa"+(4,0)="aa"; "aa"+(6,0)="aa" **@{-};
    "aa"+(4,0)="aa"; "b6" **@{-},
"a2"="aa"; "aa"+"hd"+(8,0)="aa" **@{-},
    "aa"+(4,0)="aa"; "aa"+(6,0)="aa" **@{-};
    "aa"+(4,0)="aa"; "aa"+(6,0)="aa" **@{-};
    "aa"+(4,0)="aa"; "b5" **@{-},
"a1"="aa"; "aa"+"hd"+(10,0)="aa" **@{-},
    "aa"+(4,0)="aa"; "aa"+(6,0)="aa" **@{-};
    "aa"+(4,0)="aa"; "aa"+(6,0)="aa" **@{-};
    "aa"+(4,0)="aa"; "b4" **@{-},
"a1" *{\bullet} *+!R{1},
"a2" *{\bullet} *+!R{2},
"a3" *{\bullet} *+!R{3},
"a4" *{\bullet} *+!R{4},
"a5" *{\bullet} *+!R{5},
"a6" *{\bullet} *+!R{6},
"a7" *{\bullet} *+!R{7},
"b1" *{\bullet} *+!L{1},
"b2" *{\bullet} *+!L{2},
"b3" *{\bullet} *+!L{3},
"b4" *{\bullet} *+!L{4},
"b5" *{\bullet} *+!L{5},
"b6" *{\bullet} *+!L{6},
"b7" *{\bullet} *+!L{7},
\end{xy}\\ 
\mbox{\normalsize (a) $D_3D_4D_6$}&&
\mbox{\normalsize (b) $E_6E_4E_3$}&&
\mbox{\normalsize (c) $\delta^3$}\rule{0pt}{2em}
\end{array}
$$
\caption{The braids $D_3D_4D_6$, $E_6E_4E_3$
and $\delta^3$ in $B_7$}
\label{F:aa}
\end{figure}
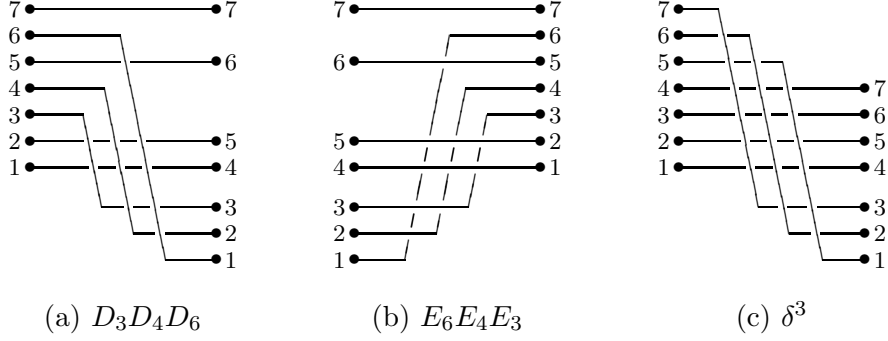

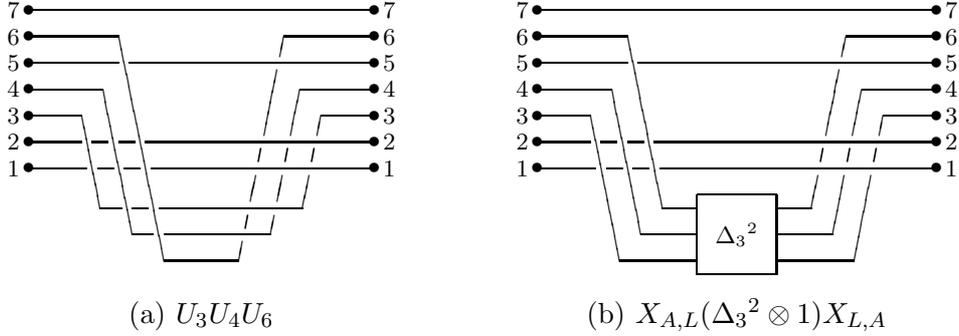
\begin{figure}\footnotesize
$$
\begin{array}{ccc}
\begin{xy}/r.35mm/:
(20,0)="hd", (-12,60)="vd", (1,-5)="td", (70,0)="hdL",
(0,0)="or"="a1"="aa",
"aa"+(0,10)="aa"="a2",
"aa"+(0,10)="aa"="a3",
"aa"+(0,10)="aa"="a4",
"aa"+(0,10)="aa"="a5",
"aa"+(0,10)="aa"="a6",
"aa"+(0,10)="aa"="a7",
"a7"+"hdL"="aa"="b7",
"aa"-(0,20)="aa"="b6",
"aa"-(0,30)="aa"="b5",
"aa"-(0,10)="aa"="b4",
"aa"-(0,15)="aa"="b3",
"aa"-(0,10)="aa"="b2",
"aa"-(0,10)="aa"="b1",
"a7"; "b7" **@{-},
"a6"="aa"; "aa"+"hd"+(14,0)="aa" **@{-};
    "aa"+(16,-80)+"td"="aa" **@{-}; "b1" **@{-},
"a5"="aa"; "aa"+"hd"+(14,0)="aa" **@{-}, "aa"+(4,0)="aa"; "b6" **@{-},
"a4"="aa"; "aa"+"hd"+(8,0)="aa" **@{-};
    "aa"+(10,-50)+"td"="aa" **@{-};
    "aa"+(8,0)="aa" **@{-}, "aa"+(4,0)="aa"; "b2" **@{-},
"a3"="aa"; "aa"+"hd"="aa" **@{-};
    "aa"+(6,-30)+"td"="aa" **@{-};
    "aa"+(8,0)="aa" **@{-}, "aa"+(4,0)="aa";
    "aa"+(6,0)="aa" **@{-}, "aa"+(4,0)="aa"; "b3" **@{-},
"a2"="aa"; "aa"+"hd"+(0,0)="aa" **@{-},
    "aa"+(4,0)="aa"; "aa"+(6,0)="aa" **@{-};
    "aa"+(4,0)="aa"; "aa"+(6,0)="aa" **@{-};
    "aa"+(4,0)="aa"; "b5" **@{-},
"a1"="aa"; "aa"+"hd"+(2,0)="aa" **@{-},
    "aa"+(4,0)="aa"; "aa"+(6,0)="aa" **@{-};
    "aa"+(4,0)="aa"; "aa"+(6,0)="aa" **@{-};
    "aa"+(4,0)="aa"; "b4" **@{-},
"a1" *{\bullet} *+!R{1},
"a2" *{\bullet} *+!R{2},
"a3" *{\bullet} *+!R{3},
"a4" *{\bullet} *+!R{4},
"a5" *{\bullet} *+!R{5},
"a6" *{\bullet} *+!R{6},
"a7" *{\bullet} *+!R{7},
(20,0)="hd", (-12,60)="vd", (-1,-5)="td", (70,0)="hdL",
(130,0)="or"="a1"="aa",
"aa"+(0,10)="aa"="a2",
"aa"+(0,10)="aa"="a3",
"aa"+(0,10)="aa"="a4",
"aa"+(0,10)="aa"="a5",
"aa"+(0,10)="aa"="a6",
"aa"+(0,10)="aa"="a7",
"a7"-"hdL"="aa"="b7",
"aa"-(0,20)="aa"="b6",
"aa"-(0,30)="aa"="b5",
"aa"-(0,10)="aa"="b4",
"aa"-(0,15)="aa"="b3",
"aa"-(0,10)="aa"="b2",
"aa"-(0,10)="aa"="b1",
"a7"; "b7" **@{-},
"a6"="aa"; "aa"-"hd"-(14,0)="aa" **@{-};
    "aa"-(1.5,7.5)="aa" **@{-}, "aa"-(1,5)="aa";
    "aa"+(-5,-25)="aa" **@{-}, "aa"-(1,5)="aa";
    "aa"+(-1,-5)="aa" **@{-}, "aa"-(1,5)="aa";
    "aa"+(-2,-10)="aa" **@{-}, "aa"-(1,5)="aa";
    "aa"+(-1,-5)="aa" **@{-}, "aa"-(1,5)="aa";
    "aa"+(-1.5,-7.5)="aa" **@{-}; "b1" **@{-},
"a5"="aa"; "b6" **@{-},
"a4"="aa"; "aa"-"hd"-(8,0)="aa" **@{-};
    "aa"-(3.5,17.5)="aa" **@{-}, "aa"-(1,5)="aa";
    "aa"+(-1,-5)="aa" **@{-}, "aa"-(1,5)="aa";
    "aa"+(-2,-10)="aa" **@{-}, "aa"-(1,5)="aa";
    "aa"+(-1.5,-7.5)="aa" **@{-}; "b2" **@{-},
"a3"="aa"; "aa"-"hd"="aa" **@{-};
    "aa"-(1.5,7.5)="aa" **@{-}, "aa"-(1,5)="aa";
    "aa"+(-1,-5)="aa" **@{-}, "aa"-(1,5)="aa";
    "aa"+(-2.5,-12.5)="aa" **@{-}; "b3" **@{-},
"a2"="aa"; "b5" **@{-},
"a1"="aa"; "b4" **@{-},
"a1" *{\bullet} *+!L{1},
"a2" *{\bullet} *+!L{2},
"a3" *{\bullet} *+!L{3},
"a4" *{\bullet} *+!L{4},
"a5" *{\bullet} *+!L{5},
"a6" *{\bullet} *+!L{6},
"a7" *{\bullet} *+!L{7},
\end{xy}
&\qquad&
\begin{xy}/r.35mm/:
(20,0)="hd", (-12,60)="vd", (1,-5)="td", (60,0)="hdL",
(10,0)="bxa", "bxa"+"bxa"="bx",
(0,0)="or"="a1"="aa",
"aa"+(0,10)="aa"="a2",
"aa"+(0,10)="aa"="a3",
"aa"+(0,10)="aa"="a4",
"aa"+(0,10)="aa"="a5",
"aa"+(0,10)="aa"="a6",
"aa"+(0,10)="aa"="a7",
"a7"+"hdL"="aa"="b7",
"aa"-(0,20)="aa"="b6",
"aa"-(0,30)="aa"="b5",
"aa"-(0,10)="aa"="b4",
"aa"-(0,15)="aa"="b3",
"aa"-(0,10)="aa"="b2",
"aa"-(0,10)="aa"="b1",
"a1"+"hdL"+"hdL"+"bx"="aa"="c1",
"aa"+(0,10)="aa"="c2",
"aa"+(0,10)="aa"="c3",
"aa"+(0,10)="aa"="c4",
"aa"+(0,10)="aa"="c5",
"aa"+(0,10)="aa"="c6",
"aa"+(0,10)="aa"="c7",
"a7"; "b7" **@{-},
"a6"="aa"; "aa"+"hd"+(14,0)="aa" **@{-};
    "aa"+(12,-60)+"td"="aa" **@{-}; "b3" **@{-},
"a5"="aa"; "aa"+"hd"+(14,0)="aa" **@{-}, "aa"+(4,0)="aa"; "b6" **@{-},
"a4"="aa"; "aa"+"hd"+(8,0)="aa" **@{-};
    "aa"+(10,-50)+"td"="aa" **@{-}; "b2" **@{-},
"a3"="aa"; "aa"+"hd"="aa" **@{-};
    "aa"+(10,-50)+"td"="aa" **@{-}; "b1" **@{-},
"a2"="aa"; "aa"+"hd"+(0,0)="aa" **@{-},
    "aa"+(4,0)="aa"; "aa"+(6,0)="aa" **@{-};
    "aa"+(4,0)="aa"; "aa"+(6,0)="aa" **@{-};
    "aa"+(4,0)="aa"; "b5" **@{-},
"a1"="aa"; "aa"+"hd"+(2,0)="aa" **@{-},
    "aa"+(4,0)="aa"; "aa"+(6,0)="aa" **@{-};
    "aa"+(4,0)="aa"; "aa"+(6,0)="aa" **@{-};
    "aa"+(4,0)="aa"; "b4" **@{-},
"a1" *{\bullet} *+!R{1},
"a2" *{\bullet} *+!R{2},
"a3" *{\bullet} *+!R{3},
"a4" *{\bullet} *+!R{4},
"a5" *{\bullet} *+!R{5},
"a6" *{\bullet} *+!R{6},
"a7" *{\bullet} *+!R{7},
(20,0)="hd", (-12,60)="vd", (-1,-5)="td", (60,0)="hdL",
"or"+"hdL"+"hdL"+"bx"+(10,0)="or"="a1"="aa",
"aa"+(0,10)="aa"="a2",
"aa"+(0,10)="aa"="a3",
"aa"+(0,10)="aa"="a4",
"aa"+(0,10)="aa"="a5",
"aa"+(0,10)="aa"="a6",
"aa"+(0,10)="aa"="a7",
"a7"-"hdL"="aa"="b7",
"aa"-(0,20)="aa"="b6",
"aa"-(0,30)="aa"="b5",
"aa"-(0,10)="aa"="b4",
"aa"-(0,15)="aa"="b3",
"aa"-(0,10)="aa"="b2",
"aa"-(0,10)="aa"="b1",
"a7"; "b7" **@{-},
"a6"="aa"; "aa"-"hd"-(14,0)="aa" **@{-};
    "aa"-(1.5,7.5)="aa" **@{-}, "aa"-(1,5)="aa";
    "aa"+(-5,-25)="aa" **@{-}, "aa"-(1,5)="aa";
    "aa"+(-1,-5)="aa" **@{-}, "aa"-(1,5)="aa";
    "aa"+(-1.5,-7.5)+(-1,-5)="aa" **@{-}; "b3" **@{-},
"a5"="aa"; "b6" **@{-},
"a4"="aa"; "aa"-"hd"-(8,0)="aa" **@{-};
    "aa"-(3.5,17.5)="aa" **@{-}, "aa"-(1,5)="aa";
    "aa"+(-1,-5)="aa" **@{-}, "aa"-(1,5)="aa";
    "aa"+(-1.5,-7.5)-(3,15)="aa" **@{-}; "b2" **@{-},
"a3"="aa"; "aa"-"hd"="aa" **@{-};
    "aa"-(1.5,7.5)="aa" **@{-}, "aa"-(1,5)="aa";
    "aa"+(-1,-5)="aa" **@{-}, "aa"-(1,5)="aa";
    "aa"+(-2.5,-12.5)+(-4,-20)="aa" **@{-}; "b1" **@{-},
"a2"="aa"; "b5" **@{-},
"a1"="aa"; "b4" **@{-},
"a1" *{\bullet} *+!L{1},
"a2" *{\bullet} *+!L{2},
"a3" *{\bullet} *+!L{3},
"a4" *{\bullet} *+!L{4},
"a5" *{\bullet} *+!L{5},
"a6" *{\bullet} *+!L{6},
"a7" *{\bullet} *+!L{7},
"b7"="aa"; "aa"-"bx"-(10,0) **@{-},
"b6"="aa"; "aa"-"bx"-(10,0) **@{-},
"b5"="aa"; "aa"-"bx"-(10,0) **@{-},
"b4"="aa"; "aa"-"bx"-(10,0) **@{-},
"b2"+(-15,0)="aa" *{{\Delta_3}^2},
"aa"+(-15,-15);
"aa"+( 15,-15) **@{-};
"aa"+( 15, 15) **@{-};
"aa"+(-15, 15) **@{-};
"aa"+(-15,-15) **@{-};
\end{xy}\\
\mbox{\normalsize (a) $U_3U_4U_6$}&&
\mbox{\normalsize (b) $X_{A,L}({\Delta_3}^2\splitBr 1)X_{L,A}$}\rule{0pt}{2em}
\end{array}
$$
\caption{$U_3U_4U_6=X_{A,L}({\Delta_3}^2\splitBr 1)X_{L,A}$,
where $A=\{3,4,6\}$ and $L=\{1,2,3\}$}
\label{F:bb}
\end{figure}

\begin{proposition}
Let $L=\{1,\ldots,k\}$ and $T=\{n-i: 0\le i\le k-1\}$.
For any $k$-subset $A$ of $\{1,\ldots,n\}$,
$$
U(A)=\bar X_{A,T}\delta^k X_{L,A}.
$$
\end{proposition}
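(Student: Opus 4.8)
The plan is to reduce the identity to the three facts already available: the factorization $U(A)=X_{A,L}(\Delta_k^2\splitBr 1)X_{L,A}$ and the expression $\delta^k=X_{T,L}(\Delta_k^2\splitBr 1)$ from Lemma~\ref{L:X}, together with the collapsing identity $X_{T,A}X_{A,L}=X_{T,L}$ from Lemma~\ref{L:TA}. The idea is that $\bar X_{A,T}$ is exactly the braid that turns the permutation prefix $X_{T,L}$ of $\delta^k$ into $X_{A,L}$, after which the right-hand side becomes the symmetric word defining $U(A)$.

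Concretely, I would first unwind the definition $\bar X_{A,T}=(X_{T,A})^{-1}$. Applying Lemma~\ref{L:TA} with the prescribed sets $L=\{1,\ldots,k\}$ and $T=\{n-i:0\le i\le k-1\}$ gives $X_{T,A}X_{A,L}=X_{T,L}$, hence, multiplying on the left by $(X_{T,A})^{-1}$,
$$\bar X_{A,T}X_{T,L}=X_{A,L}.$$
Then I would substitute $\delta^k=X_{T,L}(\Delta_k^2\splitBr 1)$ from Lemma~\ref{L:X}(ii) and compute
\begin{align*}
\bar X_{A,T}\,\delta^k\,X_{L,A}
&=\bar X_{A,T}\,X_{T,L}(\Delta_k^2\splitBr 1)\,X_{L,A}\\
&=X_{A,L}(\Delta_k^2\splitBr 1)X_{L,A}\\
&=U(A),
\end{align*}
the last equality being the third displayed identity in Lemma~\ref{L:X}(i).

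There is no real obstacle here; every step is a single substitution. The only points requiring care are the direction of the subscripts in the bar notation (it is $\bar X_{A,T}=(X_{T,A})^{-1}$, not $(X_{A,T})^{-1}$) and verifying that Lemma~\ref{L:TA} is invoked with precisely the sets $L$ and $T$ named in the proposition, so that $X_{T,A}X_{A,L}=X_{T,L}$ genuinely applies. A picture proof in the spirit of Figures~\ref{F:X} and~\ref{F:aa} is also possible, but the three-line algebraic argument above is the cleanest.
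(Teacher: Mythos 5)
Your proof is correct and is essentially the paper's own argument: both rest on Lemma~\ref{L:TA} (giving $\bar X_{A,T}X_{T,L}=X_{A,L}$) combined with the two identities $U(A)=X_{A,L}(\Delta_k^2\splitBr 1)X_{L,A}$ and $\delta^k=X_{T,L}(\Delta_k^2\splitBr 1)$ from Lemma~\ref{L:X}. The only difference is cosmetic — you expand the right-hand side while the paper rewrites $U(A)$ — and your attention to $\bar X_{A,T}=(X_{T,A})^{-1}$ matches the paper's definition.
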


\begin{proof}
By Lemma~\ref{L:TA}, $X_{T,L}=X_{T,A} X_{A,L}$, hence
$X_{A,L}=X_{T,A}^{-1}X_{T,L}=\bar X_{A,T}X_{T,L}$.
By Lemma~\ref{L:X},
$U(A)=X_{A,L} (\Delta_k^2\splitBr 1) X_{L,A}$ and
$\delta^k=X_{T,L} (\Delta_k^2\splitBr 1)$.
Combining these,
$$
U(A)
= X_{A,L} (\Delta_k^2\splitBr 1) X_{L,A}
= \bar X_{A,T}X_{T,L} (\Delta_k^2\splitBr 1) X_{L,A}
= \bar X_{A,T}\delta^k X_{L,A}.
$$
\par\vskip-\baselinestretch\baselineskip
\end{proof}

\begin{theorem}\label{T:Br}
Let $n$ and $k$ be relatively prime integers such that $2\le k\le n-1$.
Then the $n$-braid $\delta^k$ is conjugate to
$$
\delta U_{a_1}\cdots U_{a_{k-1}},
$$
where $a_i=\left\lceil \frac {ni}k\right\rceil$
for $1\le i\le k-1$.
More precisely,
if $X_k=P(\pi_k)$ where $\pi_k$ is the $n$-permutation defined by
$\pi_k(i)\equiv ki\bmod n$ for $1\le i\le n$,
then
$$
X_k^{-1}\delta^k X_k=\delta U_{a_1}\cdots U_{a_{k-1}}.
$$
\end{theorem}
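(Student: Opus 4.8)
The claim is equivalent to the braid identity $\delta^kX_k=X_k\,\delta\,U_{a_1}\cdots U_{a_{k-1}}$, so I would begin with the arithmetic of $a_i=\lceil ni/k\rceil$. Since $\gcd(n,k)=1$, no $ni/k$ is an integer, whence $2\le a_1<a_2<\cdots<a_{k-1}\le n-1$ (using $a_1=\lceil n/k\rceil\ge 2$ and $a_{k-1}=n-\lfloor n/k\rfloor\le n-1$); thus $A':=\{a_1,\dots,a_{k-1}\}$ is a $(k-1)$-subset of $\{2,\dots,n-1\}$, and $U_{a_1}\cdots U_{a_{k-1}}=U(A')$. As a sanity check, the exponent sums of the two sides agree because $\sum_{i=1}^{k-1}a_i=\frac{(k-1)(n+1)}{2}$, which follows from the identity $\lfloor ni/k\rfloor+\lfloor n(k-i)/k\rfloor=n-1$; this last identity is the basic counting tool of the whole argument.

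Next I would rewrite both sides using the earlier results of this section. Put $L'=\{1,\dots,k-1\}$ and $T'=\{n-k+2,\dots,n\}$. The preceding proposition (parameter $k-1$, set $A'$) gives $U(A')=\bar X_{A',T'}\,\delta^{k-1}\,X_{L',A'}$; combining it with Lemma~\ref{L:TA} (in the form $\bar X_{A',T'}X_{T',L'}=X_{A',L'}$) and Lemma~\ref{L:X}(ii) ($\delta^{k-1}=X_{T',L'}(\Delta_{k-1}^2\splitBr1)$) yields $\delta\,U(A')=\delta\,X_{A',L'}(\Delta_{k-1}^2\splitBr1)X_{L',A'}$, while directly $\delta^k=\delta\,X_{T',L'}(\Delta_{k-1}^2\splitBr1)$. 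Hence the theorem amounts to
\[
\delta\,X_{T',L'}(\Delta_{k-1}^2\splitBr1)\,X_k \;=\; X_k\,\delta\,X_{A',L'}(\Delta_{k-1}^2\splitBr1)\,X_{L',A'},
\]
an identity between two positive words built from $\delta$, permutation braids $X_{\cdot,\cdot}$, and one full-twist block. I would establish it by sliding the full-twist block into a common position with Lemma~\ref{L:spX} and the commutation $\delta^j(\alpha\splitBr\beta)=(\beta\splitBr\alpha)\delta^j$ of Lemma~\ref{L:X}(iii), and then recognizing the remaining positive factors by their induced permutations via Lemma~\ref{L:perm}; at the permutation level the only thing to check is $\pi_k^{-1}\circ\pi_{\delta^k}\circ\pi_k=\pi_\delta$, which is immediate because $\pi_k(i)\equiv ki$ and $\pi_{\delta^k}(i)\equiv i-k\pmod n$.

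The step I expect to be the real obstacle is making this reorganization precise — tracking, strand by strand, how the $k(n-1)$ positive crossings of the torus braid $\delta^k$ are redistributed by the relabeling $X_k$ into exactly one copy of $\delta$ and the $k-1$ local full twists $U_{a_1},\dots,U_{a_{k-1}}$, and verifying that no pair of strands is ever forced to cross twice by a wrongly placed factor. This is exactly where the values $a_i=\lceil ni/k\rceil$ get pinned down: they mark the indices at which the crossing pattern of $\delta^k$, after relabeling, breaks into successive pieces, and the floor/ceiling identity above is what makes the crossing counts on the two sides tally. The hypothesis $\gcd(n,k)=1$ enters twice over — it makes $\pi_k$ a bijection, and it guarantees that these break indices are $k-1$ \emph{distinct} interior positions, one for each factor $U_{a_i}$; for $k=n$, excluded by the hypothesis, the analogous word would not even have the right exponent sum, so the coprimality is genuinely needed.
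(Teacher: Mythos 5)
Your setup is sound and runs parallel to the paper's: the reduction of the theorem to
\[
\delta\,X_{T',L'}(\Delta_{k-1}^2\splitBr 1)\,X_k \;=\; X_k\,\delta\,X_{A',L'}(\Delta_{k-1}^2\splitBr 1)\,X_{L',A'}
\]
is correct, and the lemmas you cite (Lemma~\ref{L:TA}, Lemma~\ref{L:X}, Lemma~\ref{L:spX}, Lemma~\ref{L:perm}) are indeed the tools used. But the proof stops exactly where the real work begins, and the plan you sketch for that part would not go through as stated. First, Lemma~\ref{L:spX} and Lemma~\ref{L:X}(iii) only let you slide split braids $\alpha\splitBr\beta$ past $X_{T,L}$ or past $\delta^j$; they say nothing about sliding the block $(\Delta_{k-1}^2\splitBr 1)$ past the general permutation braid $X_k$. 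To make any such move you must first decompose $X_k=(P_1\splitBr P_2)X_{L',A'}$ and $\tau(X_k)=(Q_2\splitBr Q_1)X_{T',A'}$ via Lemma~\ref{L:Pdec}, and those decompositions exist precisely because $\pi_k(A')=L'$ and $(\pi_{\delta^{-1}}\circ\pi_k\circ\pi_{\delta})(A')=T'$. These two facts are the arithmetic heart of the theorem (in the paper they are Claim~1, proved by writing $ni=ka_i-r_i$ with $0<r_i<k$ and showing the residues $r_i$ exhaust $\{1,\dots,k-1\}$), and your proposal never establishes them; the ceiling arithmetic you do carry out (monotonicity of the $a_i$ and the exponent-sum check) is not a substitute.

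Second, your closing step --- ``recognizing the remaining positive factors by their induced permutations via Lemma~\ref{L:perm}'', justified by $\pi_k^{-1}\circ\pi_{\delta^k}\circ\pi_k=\pi_\delta$ --- is not valid as a global argument: that permutation computation only shows the two sides of the identity differ by a pure braid, and Lemma~\ref{L:perm} applies only to permutation braids, which neither side is (each contains a full twist). The paper's resolution is to localize the possible discrepancy: after using the decompositions above and the commutation lemmas, the difference is concentrated in a single factor $Q_1^{-1}P_1\splitBr Q_2^{-1}P_2$, which is forced to be pure (because both $\alpha_k=\tau(X_k)^{-1}\delta^{k-1}X_k$ and $U(A')=X_{A',L'}(\Delta_{k-1}^2\splitBr1)X_{L',A'}$ are pure), and only then does Lemma~\ref{L:perm}, applied to the permutation braids $P_i,Q_i$ componentwise, force it to be trivial. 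You explicitly flag the ``strand-by-strand redistribution of crossings'' as the expected obstacle; that obstacle is exactly Claim~1 plus this purity-and-cancellation mechanism, so the proposal as written has a genuine gap at the decisive step rather than an alternative route around it.
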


\begin{proof}
Let $A=\{a_1,\ldots,a_{k-1}\}$, $L=\{1,\ldots,k-1\}$ and $T=\{n-i:0\le i\le k-2\}$.
Let
$$\alpha_k=\tau(X_k)^{-1}\delta^{k-1} X_k.$$
Since
$\delta \alpha_k
=\delta \tau(X_k)^{-1}\delta^{k-1} X_k
=\delta \cdot \delta^{-1} X_k^{-1}\delta \cdot \delta^{k-1} X_k
=X_k^{-1}\delta^k X_k$,
it suffices to show
$$
\alpha_k=U_{a_1}\cdots U_{a_{k-1}}\, .
$$

Since $\pi_k(n)\equiv kn\equiv n\bmod n$, the $n$-th strand of $X_k$
does not cross any other strands, hence $X_k=X\splitBr 1$
for some permutation $(n-1)$-braid $X$.
Therefore $\tau(X_k)=\delta^{-1} X_k\delta=1\splitBr X$ is a permutation braid.
(See Figure~\ref{F:ex7} for the braid diagrams of $X_k$ and $\tau(X_k)$
for the case where $n=7$ and $k=3$.)

\medskip\noindent
\textbf{Claim 1.}\ \
$\pi_k(A)=L$ and $(\pi_{\delta^{-1}}\circ\pi_k\circ\pi_{\delta})(A)=T$.

\smallskip
\begin{proof}[Proof of Claim 1]
Let $1\le i\le k-1$.
Since $a_i=\lceil\frac{ni}k\rceil$, there is an integer $0\le r_i\le k-1$ such that
$$
ni=ka_i-r_i.
$$
Since $n$ and $k$ are relatively prime, $ni$ is not a multiple of $k$,
hence $r_i\ne 0$.
If $r_i=r_j$ for some $1\le i<j\le k-1$,
then $n(j-i)=k(a_j-a_i)$, hence $n(j-i)$ is a multiple of $k$.
This contradicts that $n$ and $k$ are relatively prime because $1\le j-i<k$.
Therefore $r_1,\ldots,r_{k-1}$ are mutually distinct, hence
$$
\{r_1,\ldots,r_{k-1}\}=\{1,\ldots,k-1\}=L.
$$
Since $\pi_k(a_i)\equiv ka_i=ni+r_i\equiv r_i\bmod n$,
we have $\pi_k(a_i)=r_i$ for $1\le i\le k-1$. Therefore
$$
\pi_k(A)=L.
$$

On the other hand,
$\pi_{\delta^{-1}}\circ\pi_k\circ\pi_{\delta}$ sends each $a_i$
up to modulo $n$
as follows.
$$
a_i
\stackrel{\pi_{\delta}}{\longmapsto} a_i-1
\stackrel{\pi_k}{\longmapsto} k(a_i-1)
\stackrel{\pi_{\delta^{-1}}}{\longmapsto} k(a_i-1)+1=ka_i-k+1
$$
Notice that $ka_i-k+1=ni+r_i-k+1\equiv n+r_i-k+1\bmod n$.
Since $1\le r_i\le k-1$, we have $n+2-k\le n+r_i-k+1\le n$.
Therefore
$$
(\pi_{\delta^{-1}}\circ\pi_k\circ\pi_{\delta})(a_i)=n+r_i-k+1\in T.
$$
Since $|A|=|T|$, we have $(\pi_{\delta^{-1}}\circ\pi_k\circ\pi_{\delta})(A)= T$.
\end{proof}

\medskip\noindent
\textbf{Claim 2.}\ \
$X_k$ and $\tau(X_k)$ have decompositions
\begin{align*}
X_k&=(P_1\splitBr P_2) X_{L,A},\\
\tau(X_k)&=(Q_2\splitBr Q_1) X_{T,A},
\end{align*}
where $P_1$ and $Q_1$ (resp.\ $P_2$ and $Q_2$)
are permutation braids in $B_{k-1}$ (resp.\ $B_{n-k+1}$).
\smallskip

\begin{proof}[Proof of Claim 2]
$X_k$ is the permutation braid with induced permutation $\pi_k$.
Since $\pi_k$ sends $A$ to $L$ by Claim 1,
$X_k$ has the desired decomposition
$X_k=(P_1\splitBr P_2) X_{L,A}$
for permutation braids $P_1$ and $P_2$ in $B_{k-1}$ and in $B_{n-k+1}$
respectively (by Lemma~\ref{L:Pdec}).

$\tau(X_k)=\delta^{-1} X_k\delta$ is the permutation braid
with induced permutation $\pi_{\delta^{-1}}\circ\pi_k\circ\pi_{\delta}$.
Since $\pi_{\delta^{-1}}\circ\pi_k\circ\pi_{\delta}$ sends
$A$ to $T$ (by Claim 1),
$\tau(X_k)$ has the desired decomposition
$\tau(X_k)=(Q_2\splitBr Q_1) X_{T,A}$
for permutation braids $Q_1$ and $Q_2$ in $B_{k-1}$ and in $B_{n-k+1}$
respectively (by Lemma~\ref{L:Pdec}).
\end{proof}

\medskip\noindent
\textbf{Claim 3.}\ \
$\alpha_k$ is a pure braid.
\smallskip

\begin{proof}[Proof of Claim 3]
Since $\alpha_k=\tau(X_k)^{-1}\delta^{k-1} X_k=\delta^{-1} X_k^{-1}\delta^k X_k$,
the induced permutation of $\alpha_k$
is $\pi_{\alpha_k}=\pi_{\delta^{-1}}\circ \pi_k^{-1}\circ\pi_{\delta^k}\circ \pi_k$.
Up to modulo $n$, $\pi_{\alpha_k}$ sends $1\le i\le n$ as follows.
$$
i
\stackrel{\pi_k}{\longmapsto} ki
\stackrel{\pi_{\delta^{k}}}{\longmapsto} ki-k=k(i-1)
\stackrel{\pi_k^{-1}}{\longmapsto} (i-1)
\stackrel{\pi_{\delta^{-1}}}{\longmapsto} i
$$
Therefore $\alpha_k$ is a pure braid.
\end{proof}

By Lemmas~\ref{L:X},~\ref{L:TA} and~\ref{L:spX}, we have
$$
\delta^{k-1}=X_{T,L}(\Delta_{k-1}^2\splitBr 1),\quad
X_{T,A}^{-1}X_{T,L}=X_{A,L},\quad
(Q_2^{-1}\splitBr Q_1^{-1})X_{T,L}=X_{T,L}(Q_1^{-1}\splitBr Q_2^{-1}).
$$
Using the decompositions $X_k=(P_1\splitBr P_2) X_{L,A}$
and $\tau(X_k)=(Q_2\splitBr Q_1) X_{T,A}$ in Claim 2,
\begin{align*}
\alpha_k
&=\tau(X_k)^{-1}\delta^{k-1}X_k\\
&=X_{T,A}^{-1} (Q_2^{-1}\splitBr Q_1^{-1}) ~\cdot~
    X_{T,L}(\Delta_{k-1}^2\splitBr 1) ~\cdot~
    (P_1\splitBr P_2) X_{L,A}\\
&= X_{T,A}^{-1} ~\cdot~X_{T,L} ~\cdot~
(Q_1^{-1}\splitBr Q_2^{-1})(\Delta_{k-1}^2\splitBr 1)(P_1\splitBr P_2) ~\cdot~ X_{L,A}\\
&=X_{T,A}^{-1} X_{T,L}~\cdot~
(\Delta_{k-1}^2\splitBr 1)(Q_1^{-1}P_1\splitBr Q_2^{-1}P_2)~\cdot~ X_{L,A}\\
&=X_{A,L}~\cdot~
(\Delta_{k-1}^2\splitBr 1)(Q_1^{-1}P_1\splitBr Q_2^{-1}P_2)~\cdot~ X_{L,A}.
\end{align*}
See Figure~\ref{F:al} for the relation
$$(Q_2^{-1}\splitBr Q_1^{-1})X_{T,L}(\Delta_{k-1}^2\splitBr 1)(P_1\splitBr P_2)
=X_{T,L}(\Delta_{k-1}^2\splitBr 1)(Q_1^{-1}P_1\splitBr Q_2^{-1}P_2).
$$
The braid $\alpha_k$ is a pure braid (by Claim 3).
If we delete $Q_1^{-1}P_1\splitBr Q_2^{-1}P_2$
from the expression of $\alpha_k$,
we have $X_{A,L} (\Delta_{k-1}^2\splitBr 1) X_{L,A}$,
which is $U(A)$ (by Lemma~\ref{L:X}(i)),
hence it is also a pure braid.
Because both $\alpha_k$ and  $X_{A,L} (\Delta_{k-1}^2\splitBr 1) X_{L,A}$
are pure braids, $Q_1^{-1}P_1\splitBr Q_2^{-1}P_2$ is also a pure braid.
Since $P_i$'s and $Q_i$'s are permutation braids,
both $Q_1^{-1}P_1$ and $Q_2^{-1}P_2$ are the identity braids
(by Lemma~\ref{L:perm}).
Therefore $\alpha_k=X_{A,L} (\Delta_{k-1}^2\splitBr 1) X_{L,A}=U(A)$.
\end{proof}

\begin{figure}\footnotesize
$$
\begin{array}{ccc}
\begin{xy}/r.33mm/:
(0,0)="or"="aa"="a1", (6,0)="hd", (55,0)+"hd"+"hd"="hdL",
"aa"+(0,10)="aa"="a2",
"aa"+(0,10)="aa"="a3",
"aa"+(0,10)="aa"="a4",
"aa"+(0,20)="aa"="a5",
"aa"+(0,10)="aa"="a6",
"aa"+(0,10)="aa"="a7",
"a1"+"hdL"="aa"="b1",
"aa"+(0,10)="aa"="b2",
"aa"+(0,10)="aa"="b3",
"aa"+(0,20)="aa"="b4",
"aa"+(0,10)="aa"="b5",
"aa"+(0,10)="aa"="b6",
"aa"+(0,10)="aa"="b7",
"a7"="aa"; "aa"+"hd"+(6,0)="aa" **@{-}; "aa"+(50,-50)="aa" **@{-}; "b3" **@{-},
"a6"="aa"; "aa"+"hd"+(3,0)="aa" **@{-}; "aa"+(50,-50)="aa" **@{-}; "b2" **@{-},
"a5"="aa"; "aa"+"hd"="aa" **@{-}; "aa"+(50,-50)="aa" **@{-}; "b1" **@{-},
"a4"="aa"; "aa"+"hd"="aa" **@{-};
    "aa"+(8,8)="aa" **@{-},
    "aa"+(3,3)="aa"; "aa"+(4,4)="aa" **@{-},
    "aa"+(3,3)="aa"; "aa"+(4,4)="aa" **@{-},
    "aa"+(3,3)="aa";
    "aa"+(15,15)="aa" **@{-};
    "b7" **@{-},
"a3"="aa"; "aa"+"hd"+(3,0)="aa" **@{-};
    "aa"+(12,12)="aa" **@{-},
    "aa"+(3,3)="aa"; "aa"+(4,4)="aa" **@{-},
    "aa"+(2,2)="aa"; "aa"+(4,4)="aa" **@{-},
    "aa"+(3,3)="aa";
    "aa"+(12,12)="aa" **@{-};
    "b6" **@{-},
"a2"="aa"; "aa"+"hd"+(6,0)="aa" **@{-};
    "aa"+(15,15)="aa" **@{-},
    "aa"+(3,3)="aa"; "aa"+(4,4)="aa" **@{-},
    "aa"+(3,3)="aa"; "aa"+(4,4)="aa" **@{-},
    "aa"+(3,3)="aa";
    "aa"+(8,8)="aa" **@{-};
    "b5" **@{-},
"a1"="aa"; "aa"+"hd"+(9,0)="aa" **@{-};
    "aa"+(19,19)="aa" **@{-},
    "aa"+(3,3)="aa"; "aa"+(4,4)="aa" **@{-},
    "aa"+(2,2)="aa"; "aa"+(4,4)="aa" **@{-},
    "aa"+(3,3)="aa";
    "aa"+(5,5)="aa" **@{-};
    "b4" **@{-},
(15,0)="hd", (0,14)="vd",
"a6"-"hd"="aa" *{Q_1^{-1}},
    "aa"-"hd"-"vd";
    "aa"+"hd"-"vd" **@{-};
    "aa"+"hd"+"vd" **@{-};
    "aa"-"hd"+"vd" **@{-};
    "aa"-"hd"-"vd" **@{-};
"b2"+"hd"="aa" *{\Delta_{k-1}^2},
    "aa"-"hd"-"vd";
    "aa"+"hd"-"vd" **@{-};
    "aa"+"hd"+"vd" **@{-};
    "aa"-"hd"+"vd" **@{-};
    "aa"-"hd"-"vd" **@{-};
"b2"+"hd"+"hd"+"hd"+"hd"="aa" *{P_1},
    "aa"-"hd"-"vd";
    "aa"+"hd"-"vd" **@{-};
    "aa"+"hd"+"vd" **@{-};
    "aa"-"hd"+"vd" **@{-};
    "aa"-"hd"-"vd" **@{-};
(0,19)="vd",
"a2"+(0,5)-"hd"="aa" *{Q_2^{-1}},
    "aa"-"hd"-"vd";
    "aa"+"hd"-"vd" **@{-};
    "aa"+"hd"+"vd" **@{-};
    "aa"-"hd"+"vd" **@{-};
    "aa"-"hd"-"vd" **@{-};
"b5"+(0,5)+"hd"+"hd"+"hd"+"hd"="aa" *{P_2},
    "aa"-"hd"-"vd";
    "aa"+"hd"-"vd" **@{-};
    "aa"+"hd"+"vd" **@{-};
    "aa"-"hd"+"vd" **@{-};
    "aa"-"hd"-"vd" **@{-};
(10,0)="bb",
"a1"-"hd"-"hd"="aa"; "aa"-"bb" **@{-},
"a2"-"hd"-"hd"="aa"; "aa"-"bb" **@{-},
"a3"-"hd"-"hd"="aa"; "aa"-"bb" **@{-},
"a4"-"hd"-"hd"="aa"; "aa"-"bb" **@{-},
"a5"-"hd"-"hd"="aa"; "aa"-"bb" **@{-},
"a6"-"hd"-"hd"="aa"; "aa"-"bb" **@{-},
"a7"-"hd"-"hd"="aa"; "aa"-"bb" **@{-},
"b1"+"hd"+"hd"="aa"; "aa"+"hd"="aa" **@{-}, "aa"+"hd"+"hd"="aa"; "aa"+"bb" **@{-},
"b2"+"hd"+"hd"="aa"; "aa"+"hd"="aa" **@{-}, "aa"+"hd"+"hd"="aa"; "aa"+"bb" **@{-},
"b3"+"hd"+"hd"="aa"; "aa"+"hd"="aa" **@{-}, "aa"+"hd"+"hd"="aa"; "aa"+"bb" **@{-},
"b4"="aa"; "aa"+"hd"+"hd"+"hd"="aa" **@{-}, "aa"+"hd"+"hd"="aa"; "aa"+"bb" **@{-},
"b5"="aa"; "aa"+"hd"+"hd"+"hd"="aa" **@{-}, "aa"+"hd"+"hd"="aa"; "aa"+"bb" **@{-},
"b6"="aa"; "aa"+"hd"+"hd"+"hd"="aa" **@{-}, "aa"+"hd"+"hd"="aa"; "aa"+"bb" **@{-},
"b7"="aa"; "aa"+"hd"+"hd"+"hd"="aa" **@{-}, "aa"+"hd"+"hd"="aa"; "aa"+"bb" **@{-},
\end{xy}
&\quad&
\begin{xy}/r.33mm/:
(0,0)="or"="aa"="a1", (6,0)="hd", (55,0)+"hd"+"hd"="hdL",
"aa"+(0,10)="aa"="a2",
"aa"+(0,10)="aa"="a3",
"aa"+(0,10)="aa"="a4",
"aa"+(0,20)="aa"="a5",
"aa"+(0,10)="aa"="a6",
"aa"+(0,10)="aa"="a7",
"a1"+"hdL"="aa"="b1",
"aa"+(0,10)="aa"="b2",
"aa"+(0,10)="aa"="b3",
"aa"+(0,20)="aa"="b4",
"aa"+(0,10)="aa"="b5",
"aa"+(0,10)="aa"="b6",
"aa"+(0,10)="aa"="b7",
"a7"="aa"; "aa"+"hd"+(6,0)="aa" **@{-}; "aa"+(50,-50)="aa" **@{-}; "b3" **@{-},
"a6"="aa"; "aa"+"hd"+(3,0)="aa" **@{-}; "aa"+(50,-50)="aa" **@{-}; "b2" **@{-},
"a5"="aa"; "aa"+"hd"="aa" **@{-}; "aa"+(50,-50)="aa" **@{-}; "b1" **@{-},
"a4"="aa"; "aa"+"hd"="aa" **@{-};
    "aa"+(8,8)="aa" **@{-},
    "aa"+(3,3)="aa"; "aa"+(4,4)="aa" **@{-},
    "aa"+(3,3)="aa"; "aa"+(4,4)="aa" **@{-},
    "aa"+(3,3)="aa";
    "aa"+(15,15)="aa" **@{-};
    "b7" **@{-},
"a3"="aa"; "aa"+"hd"+(3,0)="aa" **@{-};
    "aa"+(12,12)="aa" **@{-},
    "aa"+(3,3)="aa"; "aa"+(4,4)="aa" **@{-},
    "aa"+(2,2)="aa"; "aa"+(4,4)="aa" **@{-},
    "aa"+(3,3)="aa";
    "aa"+(12,12)="aa" **@{-};
    "b6" **@{-},
"a2"="aa"; "aa"+"hd"+(6,0)="aa" **@{-};
    "aa"+(15,15)="aa" **@{-},
    "aa"+(3,3)="aa"; "aa"+(4,4)="aa" **@{-},
    "aa"+(3,3)="aa"; "aa"+(4,4)="aa" **@{-},
    "aa"+(3,3)="aa";
    "aa"+(8,8)="aa" **@{-};
    "b5" **@{-},
"a1"="aa"; "aa"+"hd"+(9,0)="aa" **@{-};
    "aa"+(19,19)="aa" **@{-},
    "aa"+(3,3)="aa"; "aa"+(4,4)="aa" **@{-},
    "aa"+(2,2)="aa"; "aa"+(4,4)="aa" **@{-},
    "aa"+(3,3)="aa";
    "aa"+(5,5)="aa" **@{-};
    "b4" **@{-},
(15,0)="hd", (0,14)="vd",
"b2"+"hd"="aa" *{\Delta_{k-1}^2},
    "aa"-"hd"-"vd";
    "aa"+"hd"-"vd" **@{-};
    "aa"+"hd"+"vd" **@{-};
    "aa"-"hd"+"vd" **@{-};
    "aa"-"hd"-"vd" **@{-};
"b2"+"hd"+"hd"+"hd"+"hd"="aa" *{Q_1^{-1}P_1},
    "aa"-"hd"-"vd";
    "aa"+"hd"-"vd" **@{-};
    "aa"+"hd"+"vd" **@{-};
    "aa"-"hd"+"vd" **@{-};
    "aa"-"hd"-"vd" **@{-};
(0,19)="vd",
"b5"+(0,5)+"hd"+"hd"+"hd"+"hd"="aa" *{Q_2^{-1}P_2},
    "aa"-"hd"-"vd";
    "aa"+"hd"-"vd" **@{-};
    "aa"+"hd"+"vd" **@{-};
    "aa"-"hd"+"vd" **@{-};
    "aa"-"hd"-"vd" **@{-};
(10,0)="bb",
"b1"+"hd"+"hd"="aa"; "aa"+"hd"="aa" **@{-}, "aa"+"hd"+"hd"="aa"; "aa"+"bb" **@{-},
"b2"+"hd"+"hd"="aa"; "aa"+"hd"="aa" **@{-}, "aa"+"hd"+"hd"="aa"; "aa"+"bb" **@{-},
"b3"+"hd"+"hd"="aa"; "aa"+"hd"="aa" **@{-}, "aa"+"hd"+"hd"="aa"; "aa"+"bb" **@{-},
"b4"="aa"; "aa"+"hd"+"hd"+"hd"="aa" **@{-}, "aa"+"hd"+"hd"="aa"; "aa"+"bb" **@{-},
"b5"="aa"; "aa"+"hd"+"hd"+"hd"="aa" **@{-}, "aa"+"hd"+"hd"="aa"; "aa"+"bb" **@{-},
"b6"="aa"; "aa"+"hd"+"hd"+"hd"="aa" **@{-}, "aa"+"hd"+"hd"="aa"; "aa"+"bb" **@{-},
"b7"="aa"; "aa"+"hd"+"hd"+"hd"="aa" **@{-}, "aa"+"hd"+"hd"="aa"; "aa"+"bb" **@{-},
\end{xy}\\
\mbox{\normalsize (a) $(Q_2^{-1}\splitBr Q_1^{-1})X_{T,L}(\Delta_{k-1}^2\splitBr 1)(P_1\splitBr P_2)$}&&
\mbox{\normalsize (b) $X_{T,L}(\Delta_{k-1}^2\splitBr 1)(Q_1^{-1}P_1\splitBr Q_2^{-1}P_2)$}\rule{0pt}{2em}
\end{array}
$$
\caption{
The braid $(Q_2^{-1}\splitBr Q_1^{-1})X_{T,L}(\Delta_{k-1}^2\splitBr 1)(P_1\splitBr P_2)$
is the same as the braid $X_{T,L}(\Delta_{k-1}^2\splitBr 1)(Q_1^{-1}P_1\splitBr Q_2^{-1}P_2)$.}
\label{F:al}
\end{figure}

\begin{example}
Let $n=7$ and $k=3$.
Then the $n$-permutation  $\pi_k$ in Theorem~\ref{T:Br}
is $\pi_3=(3,6,2,5,1,4,7)$ because
$\pi_3(i)\equiv 3i\bmod 7$ for $1\le i\le 7$.
Since $a_1=\lceil\frac{7}{3}\rceil=3$ and
$a_2=\lceil\frac{7\cdot 2}{3}\rceil=5$, $A=\{a_1,a_2\}=\{3,5\}$,
hence $\pi_3(A)=\{1,2\}=L$.
See Figure~\ref{F:ex7} for the braids $X_3=P(\pi_3)$ and
$\tau(X_3)=\delta^{-1}X_3\delta$.
\end{example}

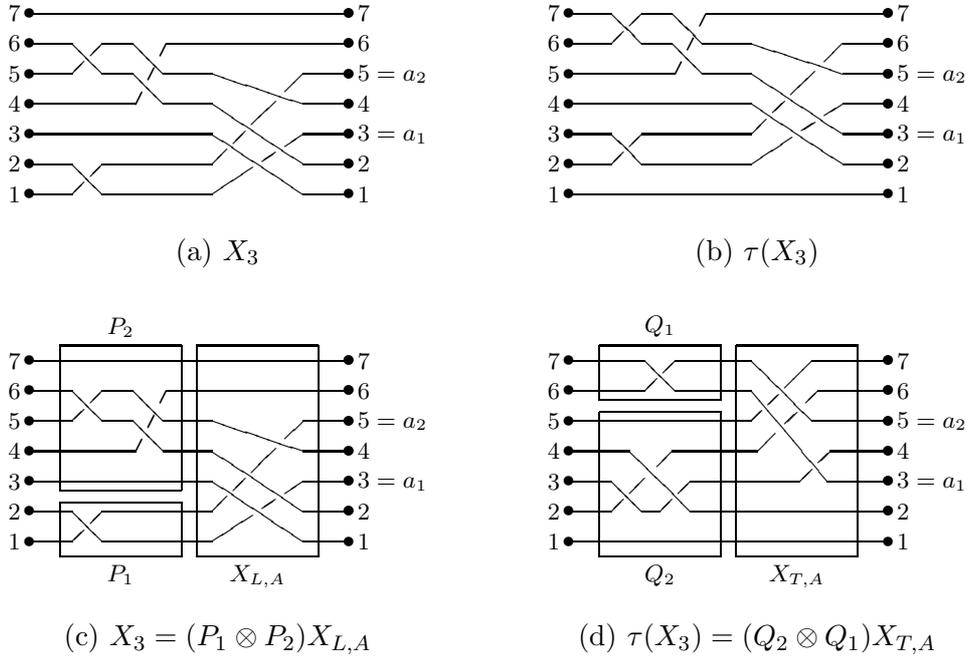
\begin{figure}\footnotesize
$$
\begin{array}{ccc}
\begin{xy}/r.4mm/:
(0,0)="or", (50,0)="hd", (30,0)="hdb", (5,0)="hda", "hda"+"hda"="hda2", (0,10)="vd",
     "or"="a1"="aa" *{\bullet} *+!R{1}, "aa"+"hd"+"hda2"="b1"="bb", "bb"+"hdb"="c1"="bb"; "bb"+"hda"+(10,0) **@{-} *{\bullet} *+!L{1},
"aa"+"vd"="a2"="aa" *{\bullet} *+!R{2}, "aa"+"hd"+"hda2"="b2"="bb", "bb"+"hdb"="c2"="bb"; "bb"+"hda"+(10,0) **@{-} *{\bullet} *{\bullet} *+!L{2},
"aa"+"vd"="a3"="aa" *{\bullet} *+!R{3}, "aa"+"hd"+"hda2"="b3"="bb", "bb"+"hdb"="c3"="bb"; "bb"+"hda"+(10,0) **@{-} *{\bullet} *{\bullet} *+!L{3=a_1},
"aa"+"vd"="a4"="aa" *{\bullet} *+!R{4}, "aa"+"hd"+"hda2"="b4"="bb", "bb"+"hdb"="c4"="bb"; "bb"+"hda"+(10,0) **@{-} *{\bullet} *{\bullet} *+!L{4},
"aa"+"vd"="a5"="aa" *{\bullet} *+!R{5}, "aa"+"hd"+"hda2"="b5"="bb", "bb"+"hdb"="c5"="bb"; "bb"+"hda"+(10,0) **@{-} *{\bullet} *{\bullet} *+!L{5=a_2},
"aa"+"vd"="a6"="aa" *{\bullet} *+!R{6}, "aa"+"hd"+"hda2"="b6"="bb", "bb"+"hdb"="c6"="bb"; "bb"+"hda"+(10,0) **@{-} *{\bullet} *{\bullet} *+!L{6},
"aa"+"vd"="a7"="aa" *{\bullet} *+!R{7}, "aa"+"hd"+"hda2"="b7"="bb", "bb"+"hdb"="c7"="bb"; "bb"+"hda"+(10,0) **@{-} *{\bullet} *{\bullet} *+!L{7},
"a1"="aa"; "aa"+"hda"+(9,0)="aa" **@{-};
    "aa"+(4,4)="aa" **@{-},
    "aa"+(2,2)="aa";
    "aa"+(4,4)="aa" **@{-};
    "b2" **@{-},
"a2"="aa"; "aa"+"hda"+(9,0)="aa" **@{-};
    "aa"+(10,-10)="aa" **@{-}; "b1" **@{-},
"a3"="aa"; "b3" **@{-},
"a4"="aa"; "aa"+"hda"+(30,0)="aa" **@{-};
    "aa"+(2,4)="aa" **@{-},
    "aa"+(2,4)="aa";
    "aa"+(2,4)="aa" **@{-};
    "aa"+(2,4)="aa";
    "aa"+(2,4)="aa" **@{-};
    "b6" **@{-},
"a5"="aa"; "aa"+"hda"+(9,0)="aa" **@{-};
    "aa"+(4,4)="aa" **@{-},
    "aa"+(2,2)="aa";
    "aa"+(4,4)="aa" **@{-};
    "aa"+(10,0)="aa" **@{-};
    "aa"+(10,-10)="aa" **@{-};
    "b5" **@{-},
"a6"="aa"; "aa"+"hda"+(9,0)="aa" **@{-};
    "aa"+(10,-10)="aa" **@{-};
    "aa"+(10,0)="aa" **@{-};
    "aa"+(10,-10)="aa" **@{-};
    "b4" **@{-},
"a7"="aa"; "b7" **@{-},
"b7"; "c7" **@{-},
"b6"; "c6" **@{-},
"b5"; "c4" **@{-},
"b4"; "c2" **@{-},
"b3"; "c1" **@{-},
"b2"="aa"; "aa"+(5,5)="aa" **@{-},
    "aa"+(2,2)="aa"; "aa"+(4,4)="aa" **@{-},
    "aa"+(2,2)="aa"; "aa"+(8,8)="aa" **@{-},
    "aa"+(3,3)="aa"; "c5" **@{-},
"b1"="aa"; "aa"+(13.5,9)="aa" **@{-},
    "aa"+(3,2)="aa"; "aa"+(4.5,3)="aa" **@{-},
    "aa"+(3,2)="aa"; "c3" **@{-},
\end{xy}
&\qquad&
\begin{xy}/r.4mm/:
(0,0)="or", (50,0)="hd", (30,0)="hdb", (5,0)="hda", "hda"+"hda"="hda2", (0,10)="vd",
     "or"="a1"="aa" *{\bullet} *+!R{1}, "aa"+"hd"+"hda2"="b1"="bb", "bb"+"hdb"="c1"="bb"; "bb"+"hda"+(10,0) **@{-} *{\bullet} *+!L{1},
"aa"+"vd"="a2"="aa" *{\bullet} *+!R{2}, "aa"+"hd"+"hda2"="b2"="bb", "bb"+"hdb"="c2"="bb"; "bb"+"hda"+(10,0) **@{-} *{\bullet} *{\bullet} *+!L{2},
"aa"+"vd"="a3"="aa" *{\bullet} *+!R{3}, "aa"+"hd"+"hda2"="b3"="bb", "bb"+"hdb"="c3"="bb"; "bb"+"hda"+(10,0) **@{-} *{\bullet} *{\bullet} *+!L{3=a_1},
"aa"+"vd"="a4"="aa" *{\bullet} *+!R{4}, "aa"+"hd"+"hda2"="b4"="bb", "bb"+"hdb"="c4"="bb"; "bb"+"hda"+(10,0) **@{-} *{\bullet} *{\bullet} *+!L{4},
"aa"+"vd"="a5"="aa" *{\bullet} *+!R{5}, "aa"+"hd"+"hda2"="b5"="bb", "bb"+"hdb"="c5"="bb"; "bb"+"hda"+(10,0) **@{-} *{\bullet} *{\bullet} *+!L{5=a_2},
"aa"+"vd"="a6"="aa" *{\bullet} *+!R{6}, "aa"+"hd"+"hda2"="b6"="bb", "bb"+"hdb"="c6"="bb"; "bb"+"hda"+(10,0) **@{-} *{\bullet} *{\bullet} *+!L{6},
"aa"+"vd"="a7"="aa" *{\bullet} *+!R{7}, "aa"+"hd"+"hda2"="b7"="bb", "bb"+"hdb"="c7"="bb"; "bb"+"hda"+(10,0) **@{-} *{\bullet} *{\bullet} *+!L{7},
"a2"="aa"; "aa"+"hda"+(9,0)="aa" **@{-};
    "aa"+(4,4)="aa" **@{-},
    "aa"+(2,2)="aa";
    "aa"+(4,4)="aa" **@{-};
    "b3" **@{-},
"a3"="aa"; "aa"+"hda"+(9,0)="aa" **@{-};
    "aa"+(10,-10)="aa" **@{-};
    "b2" **@{-},
"a4"="aa"; "b4" **@{-},
"a5"="aa"; "aa"+"hda"+(30,0)="aa" **@{-};
    "aa"+(2,4)="aa" **@{-},
    "aa"+(2,4)="aa";
    "aa"+(2,4)="aa" **@{-};
    "aa"+(2,4)="aa";
    "aa"+(2,4)="aa" **@{-};
    "b7" **@{-},
"a6"="aa"; "aa"+"hda"+(9,0)="aa" **@{-};
    "aa"+(4,4)="aa" **@{-},
    "aa"+(2,2)="aa";
    "aa"+(4,4)="aa" **@{-};
    "aa"+(10,0)="aa" **@{-};
    "aa"+(10,-10)="aa" **@{-};
    "b6" **@{-},
"a7"="aa"; "aa"+"hda"+(9,0)="aa" **@{-};
    "aa"+(10,-10)="aa" **@{-};
    "aa"+(10,0)="aa" **@{-};
    "aa"+(10,-10)="aa" **@{-};
    "b5" **@{-},
"a1"="aa"; "b1" **@{-},
"b7"; "c7" **@{-},
"b6"; "c5" **@{-},
"b5"; "c3" **@{-},
"b4"; "c2" **@{-},
"b1"; "c1" **@{-},
"b3"="aa"; "aa"+(5,5)="aa" **@{-},
    "aa"+(2,2)="aa"; "aa"+(4,4)="aa" **@{-},
    "aa"+(2,2)="aa"; "aa"+(8,8)="aa" **@{-},
    "aa"+(3,3)="aa"; "c6" **@{-},
"b2"="aa"; "aa"+(13.5,9)="aa" **@{-},
    "aa"+(3,2)="aa"; "aa"+(4.5,3)="aa" **@{-},
    "aa"+(3,2)="aa"; "c4" **@{-},
\end{xy}\\
\mbox{\normalsize (a) $X_3$}&&
\mbox{\normalsize (b) $\tau(X_3)$}\rule{0pt}{2em}
\\[1.5em]
\begin{xy}/r.4mm/:
(0,0)="or", (50,0)="hd", (30,0)="hdb", (5,0)="hda", "hda"+"hda"="hda2", (0,10)="vd",
     "or"="a1"="aa" *{\bullet} *+!R{1}, "aa"+"hd"+"hda2"="b1"="bb", "bb"+"hdb"="c1"="bb"; "bb"+"hda"+(10,0) **@{-} *{\bullet} *+!L{1},
"aa"+"vd"="a2"="aa" *{\bullet} *+!R{2}, "aa"+"hd"+"hda2"="b2"="bb", "bb"+"hdb"="c2"="bb"; "bb"+"hda"+(10,0) **@{-} *{\bullet} *{\bullet} *+!L{2},
"aa"+"vd"="a3"="aa" *{\bullet} *+!R{3}, "aa"+"hd"+"hda2"="b3"="bb", "bb"+"hdb"="c3"="bb"; "bb"+"hda"+(10,0) **@{-} *{\bullet} *{\bullet} *+!L{3=a_1},
"aa"+"vd"="a4"="aa" *{\bullet} *+!R{4}, "aa"+"hd"+"hda2"="b4"="bb", "bb"+"hdb"="c4"="bb"; "bb"+"hda"+(10,0) **@{-} *{\bullet} *{\bullet} *+!L{4},
"aa"+"vd"="a5"="aa" *{\bullet} *+!R{5}, "aa"+"hd"+"hda2"="b5"="bb", "bb"+"hdb"="c5"="bb"; "bb"+"hda"+(10,0) **@{-} *{\bullet} *{\bullet} *+!L{5=a_2},
"aa"+"vd"="a6"="aa" *{\bullet} *+!R{6}, "aa"+"hd"+"hda2"="b6"="bb", "bb"+"hdb"="c6"="bb"; "bb"+"hda"+(10,0) **@{-} *{\bullet} *{\bullet} *+!L{6},
"aa"+"vd"="a7"="aa" *{\bullet} *+!R{7}, "aa"+"hd"+"hda2"="b7"="bb", "bb"+"hdb"="c7"="bb"; "bb"+"hda"+(10,0) **@{-} *{\bullet} *{\bullet} *+!L{7},
"a1"="aa"; "aa"+"hda"+(9,0)="aa" **@{-};
    "aa"+(4,4)="aa" **@{-},
    "aa"+(2,2)="aa";
    "aa"+(4,4)="aa" **@{-};
    "b2" **@{-},
"a2"="aa"; "aa"+"hda"+(9,0)="aa" **@{-};
    "aa"+(10,-10)="aa" **@{-}; "b1" **@{-},
"a3"="aa"; "b3" **@{-},
"a4"="aa"; "aa"+"hda"+(30,0)="aa" **@{-};
    "aa"+(2,4)="aa" **@{-},
    "aa"+(2,4)="aa";
    "aa"+(2,4)="aa" **@{-};
    "aa"+(2,4)="aa";
    "aa"+(2,4)="aa" **@{-};
    "b6" **@{-},
"a5"="aa"; "aa"+"hda"+(9,0)="aa" **@{-};
    "aa"+(4,4)="aa" **@{-},
    "aa"+(2,2)="aa";
    "aa"+(4,4)="aa" **@{-};
    "aa"+(10,0)="aa" **@{-};
    "aa"+(10,-10)="aa" **@{-};
    "b5" **@{-},
"a6"="aa"; "aa"+"hda"+(9,0)="aa" **@{-};
    "aa"+(10,-10)="aa" **@{-};
    "aa"+(10,0)="aa" **@{-};
    "aa"+(10,-10)="aa" **@{-};
    "b4" **@{-},
"a7"="aa"; "b7" **@{-},
"b7"; "c7" **@{-},
"b6"; "c6" **@{-},
"b5"; "c4" **@{-},
"b4"; "c2" **@{-},
"b3"; "c1" **@{-},
"b2"="aa"; "aa"+(5,5)="aa" **@{-},
    "aa"+(2,2)="aa"; "aa"+(4,4)="aa" **@{-},
    "aa"+(2,2)="aa"; "aa"+(8,8)="aa" **@{-},
    "aa"+(3,3)="aa"; "c5" **@{-},
"b1"="aa"; "aa"+(13.5,9)="aa" **@{-},
    "aa"+(3,2)="aa"; "aa"+(4.5,3)="aa" **@{-},
    "aa"+(3,2)="aa"; "c3" **@{-},
"a1"+"hda"+(25,5)="B2",
    "B2"+(-20,-10);
    "B2"+( 20,-10) **@{-};
    "B2"+( 20,  8) **@{-};
    "B2"+(-20,  8) **@{-};
    "B2"+(-20,-10) **@{-};
    "B2"+(0,-10) *+!U{P_1},
"a5"+"hda"+(25,0)="B2",
    "B2"+(-20,-23);
    "B2"+( 20,-23) **@{-};
    "B2"+( 20, 25) **@{-};
    "B2"+(-20, 25) **@{-};
    "B2"+(-20,-23) **@{-};
    "B2"+(0,25) *+!D{P_2},
"b4"+(15,0)="B2",
    "B2"+(-20,-35);
    "B2"+( 20,-35) **@{-};
    "B2"+( 20, 35) **@{-};
    "B2"+(-20, 35) **@{-};
    "B2"+(-20,-35) **@{-};
    "B2"+(0,-35) *+!U{X_{L,A}},
\end{xy}
&\qquad&
\begin{xy}/r.4mm/:
(0,0)="or", (50,0)="hd", (30,0)="hdb", (5,0)="hda", "hda"+"hda"="hda2", (0,10)="vd",
     "or"="a1"="aa" *{\bullet} *+!R{1}, "aa"+"hd"+"hda2"="b1"="bb", "bb"+"hdb"="c1"="bb"; "bb"+"hda"+(10,0) **@{-} *{\bullet} *+!L{1},
"aa"+"vd"="a2"="aa" *{\bullet} *+!R{2}, "aa"+"hd"+"hda2"="b2"="bb", "bb"+"hdb"="c2"="bb"; "bb"+"hda"+(10,0) **@{-} *{\bullet} *{\bullet} *+!L{2},
"aa"+"vd"="a3"="aa" *{\bullet} *+!R{3}, "aa"+"hd"+"hda2"="b3"="bb", "bb"+"hdb"="c3"="bb"; "bb"+"hda"+(10,0) **@{-} *{\bullet} *{\bullet} *+!L{3=a_1},
"aa"+"vd"="a4"="aa" *{\bullet} *+!R{4}, "aa"+"hd"+"hda2"="b4"="bb", "bb"+"hdb"="c4"="bb"; "bb"+"hda"+(10,0) **@{-} *{\bullet} *{\bullet} *+!L{4},
"aa"+"vd"="a5"="aa" *{\bullet} *+!R{5}, "aa"+"hd"+"hda2"="b5"="bb", "bb"+"hdb"="c5"="bb"; "bb"+"hda"+(10,0) **@{-} *{\bullet} *{\bullet} *+!L{5=a_2},
"aa"+"vd"="a6"="aa" *{\bullet} *+!R{6}, "aa"+"hd"+"hda2"="b6"="bb", "bb"+"hdb"="c6"="bb"; "bb"+"hda"+(10,0) **@{-} *{\bullet} *{\bullet} *+!L{6},
"aa"+"vd"="a7"="aa" *{\bullet} *+!R{7}, "aa"+"hd"+"hda2"="b7"="bb", "bb"+"hdb"="c7"="bb"; "bb"+"hda"+(10,0) **@{-} *{\bullet} *{\bullet} *+!L{7},
"a1"="aa"; "b1" **@{-},
"a2"="aa"; "aa"+"hda"+(9,0)="aa" **@{-};
    "aa"+(4,4)="aa" **@{-},
    "aa"+(2,2)="aa";
    "aa"+(5,5)="aa" **@{-};
    "aa"+(4,4)="aa";
    "aa"+(5,5)="aa" **@{-};
    "b4" **@{-},
"a3"="aa"; "aa"+"hda"+(9,0)="aa" **@{-};
    "aa"+(10,-10)="aa" **@{-};
    "aa"+(6, 0)="aa" **@{-};
    "aa"+(4,4)="aa" **@{-},
    "aa"+(2,2)="aa";
    "aa"+(4,4)="aa" **@{-};
    "b3" **@{-},
"a4"="aa"; "aa"+"hda"+(15,0)="aa" **@{-};
    "aa"+(20,-20)="aa" **@{-};
    "b2" **@{-},
"a5"="aa"; "b5" **@{-},
"a6"="aa"; "aa"+"hda"+(20,0)="aa" **@{-};
    "aa"+(4,4)="aa" **@{-},
    "aa"+(2,2)="aa";
    "aa"+(4,4)="aa" **@{-};
    "b7" **@{-},
"a7"="aa"; "aa"+"hda"+(20,0)="aa" **@{-};
    "aa"+(10,-10)="aa" **@{-};
    "b6" **@{-},
"b7"; "c5"+(-10,0) **@{-}; "c5" **@{-},
"b6"; "c3"+(-5,0) **@{-}; "c3" **@{-},
"b2"; "c2" **@{-},
"b1"; "c1" **@{-},
"b5"="aa"; "aa"+(3.5,3.5)="aa" **@{-},
    "aa"+(2,2)="aa"; "aa"+(3.5,3.5)="aa" **@{-},
    "aa"+(2,2)="aa"; "aa"+(9,9)="aa" **@{-};
    "c7" **@{-},
"b4"="aa"; "aa"+(2,0)="aa" **@{-}; "aa"+(6,6)="aa" **@{-},
    "aa"+(3,3)="aa"; "aa"+(4,4)="aa" **@{-},
    "aa"+(3,3)="aa"; "aa"+(4,4)="aa" **@{-};
    "c6" **@{-},
"b3"="aa"; "aa"+(16,0)="aa" **@{-}; "aa"+(4,4)="aa" **@{-},
    "aa"+(2,2)="aa"; "aa"+(4,4)="aa" **@{-};
    "c4" **@{-},
"a6"+"hda"+(25,5)="B2",
    "B2"+(-20,10);
    "B2"+( 20,10) **@{-};
    "B2"+( 20,-8) **@{-};
    "B2"+(-20,-8) **@{-};
    "B2"+(-20,10) **@{-};
    "B2"+(0,10) *+!D{Q_1},
"a3"+"hda"+(25,0)="B2",
    "B2"+(-20,-25);
    "B2"+( 20,-25) **@{-};
    "B2"+( 20, 23) **@{-};
    "B2"+(-20, 23) **@{-};
    "B2"+(-20,-25) **@{-};
    "B2"+(0,-25) *+!U{Q_2},
"b4"+(15,0)="B2",
    "B2"+(-20,-35);
    "B2"+( 20,-35) **@{-};
    "B2"+( 20, 35) **@{-};
    "B2"+(-20, 35) **@{-};
    "B2"+(-20,-35) **@{-};
    "B2"+(0,-35) *+!U{X_{T,A}},
\end{xy}\\
\mbox{\normalsize (c) $X_3=(P_1\splitBr P_2)X_{L,A}$}&&
\mbox{\normalsize (d) $\tau(X_3)=(Q_2\splitBr Q_1)X_{T,A}$}\rule{0pt}{2em}
\end{array}
$$
\caption{The braids $X_3$ and $\tau(X_3)$}
\label{F:ex7}
\end{figure}

\begin{example}
By Theorem~\ref{T:Br},
the $n$-braid $\delta^k$ is conjugate to $\delta U_{a_1}\cdots U_{a_{k-1}}$,
where $a_i=\lceil \frac{ni}{k}\rceil$.
See the following for $n=7$ and $2\le k\le 6$,
where $\sim$ denotes conjugacy.
$$
\delta^2\sim \delta U_4,\quad
\delta^3\sim \delta U_3U_5,\quad
\delta^4\sim \delta U_2U_4U_6,\quad
\delta^5\sim \delta U_2U_3U_5U_6,\quad
\delta^6\sim \delta U_2U_3U_4U_5U_6
$$
\end{example}

\begin{corollary}\label{C:ns}
Let $n$ and $s$ be relatively prime integers with $2\le n<s$,
and let $s=nm+k$ with $m\ge 1$ and $1\le k\le n-1$.
Then the $n$-braid $\delta^s$ is conjugate to
$$\delta (U_2\cdots U_n)^m U_{a_1}\cdots U_{a_{k-1}},$$
where $a_i=\lceil \frac{ni}k\rceil$
for $1\le i\le k-1$.
\end{corollary}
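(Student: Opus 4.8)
The plan is to deduce this immediately from Theorem~\ref{T:Br} by peeling off the central part of $\delta^s$. First I would write $s=nm+k$, so that $\delta^s=\delta^{nm}\delta^k=(\delta^n)^m\delta^k$. By Lemma~\ref{L:DEU}(v) we have $\delta^n=\Delta^2=U_2\cdots U_n$, and this element is central in $B_n$; in particular, for any braid $X$ one has $X^{-1}(\delta^n)^m X=(\delta^n)^m$, and $(\delta^n)^m$ commutes with $\delta$. Also, since $\gcd(n,s)=1$ and $s\equiv k\pmod n$, we have $\gcd(n,k)=1$.

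Next I would split according to whether $k=1$ or $k\ge 2$. If $k=1$, the claimed conjugate is simply $\delta(U_2\cdots U_n)^m$ (the product $U_{a_1}\cdots U_{a_{k-1}}$ being empty), and indeed $\delta^s=(\delta^n)^m\delta=\delta(\delta^n)^m=\delta(U_2\cdots U_n)^m$ with no conjugation needed, since $\delta$ commutes with its own power $\delta^n$.

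If $k\ge 2$, then $2\le k\le n-1$ and $\gcd(n,k)=1$, so Theorem~\ref{T:Br} applies with this $k$: taking $X_k=P(\pi_k)$ as there, we have $X_k^{-1}\delta^k X_k=\delta U_{a_1}\cdots U_{a_{k-1}}$ with $a_i=\lceil\frac{ni}{k}\rceil$. Conjugating $\delta^s$ by $X_k$ and using the centrality of $\delta^n$,
\[
X_k^{-1}\delta^s X_k=\bigl(X_k^{-1}(\delta^n)^m X_k\bigr)\bigl(X_k^{-1}\delta^k X_k\bigr)=(\delta^n)^m\,\delta\,U_{a_1}\cdots U_{a_{k-1}}.
\]
Finally I would rewrite $(\delta^n)^m=(U_2\cdots U_n)^m$ by Lemma~\ref{L:DEU}(v) and move this central factor past $\delta$, which gives $\delta(U_2\cdots U_n)^m U_{a_1}\cdots U_{a_{k-1}}$, the asserted form.

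I expect no real obstacle: this is a direct corollary of Theorem~\ref{T:Br}. The only points that need a word are that $\delta^{nm}$ is central (so conjugation by $X_k$ fixes it and it may be commuted past $\delta$), the identification $\delta^n=U_2\cdots U_n$, and the degenerate case $k=1$, where Theorem~\ref{T:Br} is not used but the identity is trivially true.
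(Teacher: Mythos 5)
Your proposal is correct and follows essentially the same route as the paper's own proof: handle $k=1$ trivially, apply Theorem~\ref{T:Br} for $k\ge 2$, and use the centrality of $\delta^n=\Delta^2=U_2\cdots U_n$ (Lemma~\ref{L:DEU}(v)) to commute the factor $(U_2\cdots U_n)^m$ past $\delta$. Your explicit remark that conjugation by $X_k$ fixes the central factor is just a slightly more spelled-out version of the same argument.
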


\begin{proof}
If $k=1$, there is nothing to prove because $\delta^s=\delta\cdot(\delta^n)^m
=\delta (U_2\cdots U_n)^m$.

Let $k\ge 2$.
Since $n$ and $k$ are relatively prime,
$\delta^k$ is conjugate to $\delta U_{a_1}\cdots U_{a_{k-1}}$ by Theorem~\ref{T:Br}.
Since $\delta^n=\Delta^2$ is central and $\Delta^2=U_2\cdots U_n$,
$\delta^s=\delta^{nm}\delta^k$ is conjugate to
$$\delta^{nm}\cdot \delta U_{a_1}\cdots U_{a_{k-1}}
=\delta\cdot \delta^{nm}\cdot U_{a_1}\cdots U_{a_{k-1}}
=\delta (U_2\cdots U_n)^m U_{a_1}\cdots U_{a_{k-1}}.$$
\vskip-\baselineskip
\end{proof}

\section{Petal numbers of torus knots}
Recall that $\PG(\pi)$ denotes the petal grid diagram obtained from
the permutation $\pi$
and that $K(\pi)$ denotes the knot represented by $\PG(\pi)$.

\begin{definition}
Let $\pi
=(a_1,a_3,\cdots,a_{2n+1})\odot(a_2,a_4,\ldots,a_{2n})
=(a_1,a_2,\ldots,a_{2n+1})
$ be a petal permutation.
\begin{itemize}
\item[(i)]
$\pi$ is called \emph{braided} if
$a_{1}=n+1$; $a_{2i+1}\le n$  and $a_{2i}\ge n+2$ for $i=1,\ldots,n$.
\item[(ii)]
$\pi$ is called \emph{strongly braided} if $a_{2i+1}=n+1-i$ for $i=0,1,\ldots,n$, i.e.\
$$\pi=(n+1, n,\ldots,1)\odot(a_2,a_4,\ldots,a_{2n}).$$
\end{itemize}
\end{definition}

\begin{example}
Figure~\ref{F:SB} shows the petal grid diagrams
determined by the following permutations.
\begin{align*}
\pi_1&=(5,4,3,2,1)\odot(9,8,7,6)=(5,9,4,8,3,7,2,6,1)\\
\pi_2&=(5,4,3,2,1)\odot(7,6,8,9)=(5,7,4,6,3,8,2,9,1)\\
\pi_3&=(5,3,4,1,2)\odot(8,7,9,6)=(5,8,3,7,4,9,1,6,2)\\
\pi_4&=(4,3,2,1,9)\odot(8,7,6,5)=(4,8,3,7,2,6,1,5,9)\\
\pi_5&=(9,8,7,6,5)\odot (4,3,2,1)=(9,4,8,3,7,2,6,1,5)
\end{align*}
$\pi_1$ and $\pi_2$ are strongly braided.
$\pi_3$ is braided, but not strongly braided.
$\pi_4$ and $\pi_5$ are not braided.
Notice that the petal grid diagrams $\PG(\pi_1)$, $\PG(\pi_2)$ and $\PG(\pi_3)$
are closed braid diagrams.
(Suppose that the nodes of the grid diagrams
belong to $\{1,\ldots,2n+1\}\times\{1,\ldots,2n+1\}$.
Then all the strands
in $\PG(\pi_1)$, $\PG(\pi_2)$ and $\PG(\pi_3)$
move clockwise around the point $C=(n+\frac32,n+\frac32)$.)
Also, notice that even though $\pi_5$ is not a braided petal permutation in our definition,
the petal grid diagram $\PG(\pi_5)$ looks like a closed braid diagram
where the strands move counterclockwise around the point $C$.
\end{example}

\begin{figure}\footnotesize
$$
\begin{xy}/r.4mm/:
(0,0)="or", (10,0)="hd", (0,-10)="vd", (10,-10)="dd",
"or"="A1"="aa", 
    "aa"+"dd"="A2"="aa",
    "aa"+"dd"="A3"="aa",
    "aa"+"dd"="A4"="aa",
"A1"+(60,0)="B1"="aa", 
    "aa"+"dd"="B2"="aa",
    "aa"+"dd"="B3"="aa",
    "aa"+"dd"="B4"="aa",
"B1"+(0,-60)="C1"="aa", 
    "aa"+"dd"="C2"="aa",
    "aa"+"dd"="C3"="aa",
    "aa"+"dd"="C4"="aa",
"A1"+(0,-50)="D1"="aa", 
    "aa"+"dd"="D2"="aa",
    "aa"+"dd"="D3"="aa",
    "aa"+"dd"="D4"="aa",
    "aa"+"dd"="D5"="aa",
    "D1"+(40,0)="D0",
"B1"; "C1" **@{-} ?(.66) *@{>},
"B2"; "C2" **@{-} ?(.5) *@{>},
"B3"; "C3" **@{-} ?(.34) *@{>},
"B4"; "C4" **@{-} ?(.18) *@{>},
"D1"; "A1" **@{-} ?(.3) *@{>},
"D2"; "A2" **@{-} ?(.5) *@{>},
"D3"; "A3" **@{-} ?(.7) *@{>},
"D4"; "A4" **@{-} ?(.9) *@{>},
"D5"; "D0" **@{-} ?(.65) *@{>},
"B1"; "A1" **@{-},
"B2"="aa"; "aa"-(8,0)="aa" **@{-},
    "aa"-(4,0); "A2" **@{-},
"B3"="aa"; "aa"-(8,0)="aa" **@{-},
    "aa"-(4,0)="aa"; "aa"-(6,0)="aa" **@{-},
    "aa"-(4,0); "A3" **@{-},
"B4"="aa"; "aa"-(8,0)="aa" **@{-},
    "aa"-(4,0)="aa"; "aa"-(6,0)="aa" **@{-},
    "aa"-(4,0)="aa"; "aa"-(6,0)="aa" **@{-},
    "aa"-(4,0); "A4" **@{-},
"D1"="aa"; "aa"+(8,0)="aa" **@{-},
    "aa"+(4,0)="aa"; "aa"+(6,0)="aa" **@{-},
    "aa"+(4,0)="aa"; "aa"+(6,0)="aa" **@{-},
    "aa"+(4,0)="aa"; "D0" **@{-},
"D2"="aa"; "aa"+(8,0)="aa" **@{-},
    "aa"+(4,0)="aa"; "aa"+(6,0)="aa" **@{-},
    "aa"+(4,0)="aa"; "aa"+(6,0)="aa" **@{-},
    "aa"+(4,0)="aa"; "C1" **@{-},
"D3"="aa"; "aa"+(8,0)="aa" **@{-},
    "aa"+(4,0)="aa"; "aa"+(6,0)="aa" **@{-},
    "aa"+(4,0)="aa"; "C2" **@{-},
"D4"="aa"; "aa"+(8,0)="aa" **@{-},
    "aa"+(4,0)="aa"; "C3" **@{-},
"D5"; "C4" **@{-},
"B1"+(-10,1)="aa", "aa" *!D{9}, 
"aa"+(0,-10)="aa" *!D{8},
"aa"+(0,-10)="aa" *!D{7},
"aa"+(0,-10)="aa" *!D{6},
"aa"+(-14,-20) *!D{5},
"aa"+(0,-30)="aa" *!D{4},
"aa"+(0,-10)="aa" *!D{3},
"aa"+(0,-10)="aa" *!D{2},
"aa"+(0,-10)="aa" *!D{1},
"D1"+(-.5,5)="aa", "aa" *!R{1}, 
"aa"+(10,0)="aa" *!R{2},
"aa"+(10,0)="aa" *!R{3},
"aa"+(10,0)="aa" *!R{4},
"aa"+(12,-10) *!L{5},
"aa"+(30,0)="aa" *!R{6},
"aa"+(10,0)="aa" *!R{7},
"aa"+(10,0)="aa" *!R{8},
"aa"+(10,0)="aa" *!R{9},
"D5"+(10,-20) *{\normalsize
    \begin{array}[t]{l}
    \mbox{(a)\ \ $\PG(\pi_1)$ with}\\
    \pi_1=(5,4,3,2,1)\odot(9,8,7,6)
    \end{array}},
\end{xy}
\quad
\begin{xy}/r.4mm/:
(0,0)="or", (10,0)="hd", (0,-10)="vd", (10,-10)="dd",
"or"+(0,-20)="A1"="aa", 
    "aa"+"dd"="A2"="aa",
    "aa"+(10,20)="A3"="aa",
    "aa"+(10,10)="A4"="aa",
"A1"+(60,0)="B1"="aa", 
    "aa"+"dd"="B2"="aa",
    "aa"+(10,20)="B3"="aa",
    "aa"+(10,10)="B4"="aa",
"B1"+(0,-40)="C1"="aa", 
    "aa"+"dd"="C2"="aa",
    "aa"+"dd"="C3"="aa",
    "aa"+"dd"="C4"="aa",
"A1"+(0,-30)="D1"="aa", 
    "aa"+"dd"="D2"="aa",
    "aa"+"dd"="D3"="aa",
    "aa"+"dd"="D4"="aa",
    "aa"+"dd"="D5"="aa",
    "D1"+(40,0)="D0",
"B1"; "C1" **@{-} ?(.5) *@{>},
"B2"; "C2" **@{-} ?(.25) *@{>},
"B3"; "C3" **@{-} ?(.43) *@{>},
"B4"; "C4" **@{-} ?(.44) *@{>},
"D1"; "A1" **@{-} ?(.45) *@{>},
"D2"; "A2" **@{-} ?(.8) *@{>},
"D3"; "A3" **@{-} ?(.55) *@{>},
"D4"; "A4" **@{-} ?(.55) *@{>},
"D5"; "D0" **@{-} ?(.65) *@{>},
"A1"="aa"; "aa"+(18,0)="aa" **@{-},
    "aa"+(4,0)="aa"; "aa"+(6,0)="aa" **@{-},
    "aa"+(4,0)="aa"; "B1" **@{-},
"A2"="aa"; "aa"+(8,0)="aa" **@{-},
    "aa"+(4,0)="aa"; "aa"+(6,0)="aa" **@{-},
    "aa"+(4,0)="aa"; "B2"-(12,0)="aa" **@{-},
    "aa"+(4,0)="aa"; "B2" **@{-},
"A3"="aa"; "aa"+(8,0)="aa" **@{-},
    "aa"+(4,0)="aa"; "B3" **@{-},
"A4"="aa"; "B4" **@{-},
"D1"="aa"; "aa"+(8,0)="aa" **@{-},
    "aa"+(4,0)="aa"; "aa"+(6,0)="aa" **@{-},
    "aa"+(4,0)="aa"; "aa"+(6,0)="aa" **@{-},
    "aa"+(4,0)="aa"; "D0" **@{-},
"D2"="aa"; "aa"+(8,0)="aa" **@{-},
    "aa"+(4,0)="aa"; "aa"+(6,0)="aa" **@{-},
    "aa"+(4,0)="aa"; "aa"+(6,0)="aa" **@{-},
    "aa"+(4,0)="aa"; "C1" **@{-},
"D3"="aa"; "aa"+(8,0)="aa" **@{-},
    "aa"+(4,0)="aa"; "aa"+(6,0)="aa" **@{-},
    "aa"+(4,0)="aa"; "C2" **@{-},
"D4"="aa"; "aa"+(8,0)="aa" **@{-},
    "aa"+(4,0)="aa"; "C3" **@{-},
"D5"; "C4" **@{-},
"B1"+(-10,21)="aa", "aa" *!D{9}, 
"aa"+(0,-10)="aa" *!D{8},
"aa"+(0,-10)="aa" *!D{7},
"aa"+(0,-10)="aa" *!D{6},
"aa"+(-15,-20) *!D{5},
"aa"+(0,-30)="aa" *!D{4},
"aa"+(0,-10)="aa" *!D{3},
"aa"+(0,-10)="aa" *!D{2},
"aa"+(0,-10)="aa" *!D{1},
"D1"+(-.5,5)="aa", "aa" *!R{1}, 
"aa"+(10,0)="aa" *!R{2},
"aa"+(10,0)="aa" *!R{3},
"aa"+(10,0)="aa" *!R{4},
"aa"+(12,-10) *!L{5},
"aa"+(30,0)="aa" *!R{6},
"aa"+(10,0)="aa" *!R{7},
"aa"+(10,0)="aa" *!R{8},
"aa"+(10,0)="aa" *!R{9},
"D5"+(10,-20) *{\normalsize
    \begin{array}{l}
    \mbox{(b)\ \ $\PG(\pi_2)$ with}\\
    \pi_2=(5,4,3,2,1)\odot(7,6,8,9)
    \end{array}},
\end{xy}
\quad
\begin{xy}/r.4mm/:
(0,0)="or", (10,0)="hd", (0,-10)="vd", (10,-10)="dd",
"or"+(0,-10)="A1"="aa", 
    "A1"+"dd"="A2"="aa",
    "A1"+(20, 10)="A3",
    "A2"+(20,-10)="A4",
"A1"+(60,0)="B1", 
    "B1"+"dd"="B2",
    "B2"+(10,20)="B3",
    "B2"+(20,-10)="B4",
"B1"+(0,-60)="C1", 
    "C1"+(10,10)="C2",
    "C1"+(20,-20)="C3",
    "C3"+(10,10)="C4",
"A1"+(0,-40)="D1", 
    "D1"+(10,-20)="D2",
    "D2"+(10,10)="D3",
    "D3"+(10,-30)="D4",
    "D4"+(10,10)="D5",
    "D1"+(40,0)="D0",
"B1"; "C1" **@{-} ?(.5) *@{>},
"B2"; "C2" **@{-} ?(.5) *@{>},
"B3"; "C3" **@{-} ?(.45) *@{>},
"B4"; "C4" **@{-} ?(.2) *@{>},
"D1"; "A1" **@{-} ?(.35) *@{>},
"D2"; "A2" **@{-} ?(.67) *@{>},
"D3"; "A3" **@{-} ?(.4) *@{>},
"D4"; "A4" **@{-} ?(.9) *@{>},
"D5"; "D0" **@{-} ?(.55) *@{>},
"A1"; "A1"+(18,0)="aa" **@{-},
    "aa"+(4,0)="aa"; "B1" **@{-},
"A2"; "A2"+(8,0) **@{-},
    "A2"+(12,0); "B2"+(-12,0)="aa" **@{-},
    "B2"-(8,0); "B2" **@{-},
"A3"; "B3" **@{-},
"A4"; "A4"+(28,0)="aa" **@{-},
    "aa"+(4,0)="aa"; "aa"+(6,0)="aa" **@{-},
    "aa"+(4,0)="aa"; "aa"+(6,0)="aa" **@{-},
    "aa"+(4,0); "B4" **@{-},
"D1"="aa"; "aa"+(8,0)="aa" **@{-},
    "aa"+(4,0)="aa"; "aa"+(6,0)="aa" **@{-},
    "aa"+(4,0)="aa"; "aa"+(6,0)="aa" **@{-},
    "aa"+(4,0)="aa"; "D0" **@{-},
"D2"="aa"; "aa"+(18,0)="aa" **@{-},
    "aa"+(4,0)="aa"; "aa"+(6,0)="aa" **@{-},
    "aa"+(4,0)="aa"; "C1" **@{-},
"D3"="aa"; "aa"+(8,0)="aa" **@{-},
    "aa"+(4,0)="aa"; "aa"+(6,0)="aa" **@{-},
    "aa"+(4,0)="aa"; "aa"+(16,0)="aa" **@{-},
    "aa"+(4,0)="aa"; "C2" **@{-},
"D4"; "C3" **@{-},
"D5"="aa"; "C4"-(12,0)="aa" **@{-},
    "aa"+(4,0); "C4" **@{-},
"B1"+(-10,11)="aa", "aa" *!D{9}, 
"aa"+(0,-10)="aa" *!D{8},
"aa"+(0,-10)="aa" *!D{7},
"aa"+(0,-10)="aa" *!D{6},
"aa"+(-15,-20) *!D{5},
"aa"+(0,-30)="aa" *!D{4},
"aa"+(0,-10)="aa" *!D{3},
"aa"+(0,-10)="aa" *!D{2},
"aa"+(0,-10)="aa" *!D{1},
"D1"+(-.5,5)="aa" *!R{1}, 
"aa"+(10,0)="aa" *!R{2},
"aa"+(10,0)="aa" *!R{3},
"aa"+(10,0)="aa" *!R{4},
"aa"+(12,-10) *!L{5},
"aa"+(30,0)="aa" *!R{6},
"aa"+(10,0)="aa" *!R{7},
"aa"+(10,0)="aa" *!R{8},
"aa"+(10,0)="aa" *!R{9},
"D5"+(10,-30) *{\normalsize
    \begin{array}{l}
    \mbox{(c)\ \ $\PG(\pi_3)$ with}\\
    \pi_3=(5,3,4,1,2)\odot(8,7,9,6)
    \end{array}},
\end{xy}
$$

$$
\begin{xy}/r.4mm/:
(0,0)="or", (10,0)="hd", (0,-10)="vd", (10,-10)="dd",
"or"+(0,-20)="A1"="aa", 
    "aa"+"dd"="A2"="aa",
    "aa"+"dd"="A3"="aa",
    "aa"+"dd"="A4"="aa",
"A1"+(50,10)="B1"="aa", 
    "aa"+"dd"="B2"="aa",
    "aa"+"dd"="B3"="aa",
    "aa"+"dd"="B4"="aa",
    "aa"+"dd"="B5"="aa",
    "aa"+(0,40)="B0",
"B1"+(0,-60)="C1"="aa", 
    "aa"+"dd"="C2"="aa",
    "aa"+"dd"="C3"="aa",
    "aa"+"dd"="C4"="aa",
"A1"+(0,-50)="D1"="aa", 
    "aa"+"dd"="D2"="aa",
    "aa"+"dd"="D3"="aa",
    "aa"+"dd"="D4"="aa",
"B1"; "C1" **@{-} ?(.81) *@{>},
"B2"; "C2" **@{-} ?(.64) *@{>},
"B3"; "C3" **@{-} ?(.47) *@{>},
"B4"; "C4" **@{-} ?(.3) *@{>},
"B0"; "B1" **@{-}, "B5" **@{-} ?(.3) *@{<},
"D1"; "A1" **@{-} ?(.3) *@{>},
"D2"; "A2" **@{-} ?(.5) *@{>},
"D3"; "A3" **@{-} ?(.7) *@{>},
"D4"; "A4" **@{-} ?(.9) *@{>},
"A1"; "B2"-(12,0)="aa" **@{-},
    "aa"+(4,0)="aa"; "B2" **@{-},
"A2"; "B3"-(22,0)="aa" **@{-},
    "aa"+(4,0)="aa"; "aa"+(6,0)="aa" **@{-},
    "aa"+(4,0)="aa"; "B3" **@{-},
"A3"; "B4"-(32,0)="aa" **@{-},
    "aa"+(4,0)="aa"; "aa"+(6,0)="aa" **@{-},
    "aa"+(4,0)="aa"; "aa"+(6,0)="aa" **@{-},
    "aa"+(4,0)="aa"; "B4" **@{-},
"A4"; "B5"-(42,0)="aa" **@{-},
    "aa"+(4,0)="aa"; "aa"+(6,0)="aa" **@{-},
    "aa"+(4,0)="aa"; "aa"+(6,0)="aa" **@{-},
    "aa"+(4,0)="aa"; "aa"+(6,0)="aa" **@{-},
    "aa"+(4,0)="aa"; "B5" **@{-},
"D1"="aa"; "aa"+(8,0)="aa" **@{-},
    "aa"+(4,0)="aa"; "aa"+(6,0)="aa" **@{-},
    "aa"+(4,0)="aa"; "aa"+(6,0)="aa" **@{-},
    "aa"+(4,0)="aa"; "C1" **@{-},
"D2"="aa"; "aa"+(8,0)="aa" **@{-},
    "aa"+(4,0)="aa"; "aa"+(6,0)="aa" **@{-},
    "aa"+(4,0)="aa"; "C2" **@{-},
"D3"="aa"; "aa"+(8,0)="aa" **@{-},
    "aa"+(4,0)="aa"; "C3" **@{-},
"D4"; "C4" **@{-},
"B1"+(5,1)="aa" *!D{9}, 
"aa"+(-15,-10)="aa" *!D{8},
"aa"+(0,-10)="aa" *!D{7},
"aa"+(0,-10)="aa" *!D{6},
"aa"+(0,-10)="aa" *!D{5},
"aa"+(0,-20)="aa" *!D{4},
"aa"+(0,-10)="aa" *!D{3},
"aa"+(0,-10)="aa" *!D{2},
"aa"+(0,-10)="aa" *!D{1},
"D1"+(-.5,5)="aa", "aa" *!R{1}, 
"aa"+(10,0)="aa" *!R{2},
"aa"+(10,0)="aa" *!R{3},
"aa"+(10,0)="aa" *!R{4},
"aa"+(20,0)="aa" *!R{5},
"aa"+(10,0)="aa" *!R{6},
"aa"+(10,0)="aa" *!R{7},
"aa"+(10,0)="aa" *!R{8},
"aa"+(10,30)="aa" *!R{9},
"D4"+(10,-20) *{\normalsize
    \begin{array}{l}
    \mbox{(d)\ \ $\PG(\pi_4)$ with}\\
    \pi_4=(4,3,2,1,9)\odot(8,7,6,5)
    \end{array}},
\end{xy}
\qquad
\qquad
\begin{xy}/r.4mm/:
(0,0)="or", (10,0)="hd", (0,-10)="vd", (10,-10)="dd",
"or"+(0,-10)="A1"="aa", 
    "aa"+"dd"="A2"="aa",
    "aa"+"dd"="A3"="aa",
    "aa"+"dd"="A4"="aa",
"A1"+(50,0)="B1"="aa", 
    "aa"+"dd"="B2"="aa",
    "aa"+"dd"="B3"="aa",
    "aa"+"dd"="B4"="aa",
    "aa"+"dd"="B5"="aa",
    "aa"+(-40,0)="B0",
"B2"+(0,-50)="C1"="aa", 
    "aa"+"dd"="C2"="aa",
    "aa"+"dd"="C3"="aa",
    "aa"+"dd"="C4"="aa",
"A1"+(0,-60)="D1"="aa", 
    "aa"+"dd"="D2"="aa",
    "aa"+"dd"="D3"="aa",
    "aa"+"dd"="D4"="aa",
"B2"; "C1" **@{-} ?(.7) *@{<},
"B3"; "C2" **@{-} ?(.5) *@{<},
"B4"; "C3" **@{-} ?(.3) *@{<},
"B5"; "C4" **@{-} ?(.1) *@{<},
"B1"; "B0" **@{-} ?(.35) *@{<},
"D1"; "A1" **@{-} ?(.35) *@{<},
"D2"; "A2" **@{-} ?(.5) *@{<},
"D3"; "A3" **@{-} ?(.65) *@{<},
"D4"; "A4" **@{-} ?(.8) *@{<},
"A1"; "B1" **@{-},
"A2"; "B2"-(12,0)="aa" **@{-},
    "aa"+(4,0)="aa"; "B2" **@{-},
"A3"; "B3"-(22,0)="aa" **@{-},
    "aa"+(4,0)="aa"; "aa"+(6,0)="aa" **@{-},
    "aa"+(4,0)="aa"; "B3" **@{-},
"A4"; "B4"-(32,0)="aa" **@{-},
    "aa"+(4,0)="aa"; "aa"+(6,0)="aa" **@{-},
    "aa"+(4,0)="aa"; "aa"+(6,0)="aa" **@{-},
    "aa"+(4,0)="aa"; "B4" **@{-},
"B0"; "B5"-(32,0)="aa" **@{-},
    "aa"+(4,0)="aa"; "aa"+(6,0)="aa" **@{-},
    "aa"+(4,0)="aa"; "aa"+(6,0)="aa" **@{-},
    "aa"+(4,0)="aa"; "B5" **@{-},
"D1"="aa"; "aa"+(8,0)="aa" **@{-},
    "aa"+(4,0)="aa"; "aa"+(6,0)="aa" **@{-},
    "aa"+(4,0)="aa"; "aa"+(6,0)="aa" **@{-},
    "aa"+(4,0)="aa"; "C1" **@{-},
"D2"="aa"; "aa"+(8,0)="aa" **@{-},
    "aa"+(4,0)="aa"; "aa"+(6,0)="aa" **@{-},
    "aa"+(4,0)="aa"; "C2" **@{-},
"D3"="aa"; "aa"+(8,0)="aa" **@{-},
    "aa"+(4,0)="aa"; "C3" **@{-},
"D4"; "C4" **@{-},
"B1"+(-10,1)="aa" *!D{9}, 
"aa"+(0,-10)="aa" *!D{8},
"aa"+(0,-10)="aa" *!D{7},
"aa"+(0,-10)="aa" *!D{6},
"aa"+(15,-10)="aa" *!D{5},
"aa"+(-15,-20)="aa" *!D{4},
"aa"+(0,-10)="aa" *!D{3},
"aa"+(0,-10)="aa" *!D{2},
"aa"+(0,-10)="aa" *!D{1},
"D1"+(-.5,5)="aa", "aa" *!R{1}, 
"aa"+(10,0)="aa" *!R{2},
"aa"+(10,0)="aa" *!R{3},
"aa"+(10,0)="aa" *!R{4},
"aa"+(20,20)="aa" *!R{5},
"aa"+(10,-20)="aa" *!R{6},
"aa"+(10,0)="aa" *!R{7},
"aa"+(10,0)="aa" *!R{8},
"aa"+(10,0)="aa" *!R{9},
"D4"+(10,-20) *{\normalsize
    \begin{array}{l}
    \mbox{(e)\ \ $\PG(\pi_5)$ with}\\
    \pi_5=(9,8,7,6,5)\odot (4,3,2,1)
    \end{array}},
\end{xy}
$$

\caption{The petal grid diagrams $\PG(\pi_1),\ldots,\PG(\pi_5)$}
\label{F:SB}
\end{figure}

\begin{definition}[braid insertion]
Let $\pi$ be a braided petal permutation of length $2n+1$,
and let $1\le k\le n$.
For a $k$-braid $\alpha$, $\PG(\pi;\alpha)$ denotes the knot diagram obtained from $\PG(\pi)$
by inserting the braid $\alpha$ in the lowest $k$ strands as in Figure~\ref{F:ins}.
We call $\PG(\pi;\alpha)$ the \emph{insertion of $\alpha$ into $\PG(\pi)$}.
$K(\pi;\alpha)$ denotes the knot represented by $\PG(\pi;\alpha)$.
\end{definition}

\begin{remark}\label{rmk}
Let $\pi_n$ be the strongly braided petal permutation
$$
\pi_n=(n+1,n,\ldots,1)\odot(2n+1,2n,\ldots,n+2).
$$
For example,
$\pi_2=(3,2,1)\odot(5,4)$,
$\pi_3=(4,3,2,1)\odot(7,6,5)$
and $\pi_4=(5,4,3,2,1)\odot(9,8,7,6)$.
Then $K(\pi_n)$ is the torus knot $T_{n,n+1}$.
See Figure~\ref{F:ex2}(e) for $n=2$ and
Figure~\ref{F:SB}(a) for $n=4$.
Observe the following.
\begin{itemize}
\item[(i)] $T_{n,n+1}$ is isotopic to the closure of
the $n$-braid $\delta^{n+1}=\Delta^2\delta$.
The petal grid diagram $\PG(\pi_n)$
is a closed braid diagram for $T_{n,n+1}$.

\item[(ii)] For an $n$-braid $\alpha$,
$K(\pi_n;\alpha)$ is isotopic to the closure of the braid
$\delta^{n+1}\alpha=\Delta^2\delta\alpha$.
\end{itemize}
\end{remark}

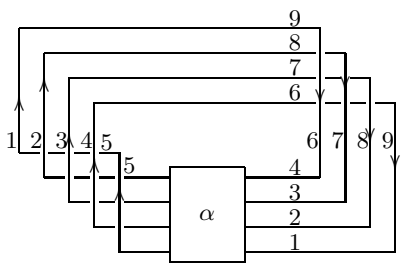
\begin{figure}\footnotesize
$$
\begin{xy}/r.33mm/:
(0,0)="or", (10,0)="hd", (0,-10)="vd", (10,-10)="dd", (60,0)="len",
(0,4)="vlen", (15,0)="hlen",
"or"+(0,0)="A1"="aa", 
    "aa"+"dd"="A2"="aa",
    "aa"+"dd"="A3"="aa",
    "aa"+"dd"="A4"="aa",
"A1"+"len"+(60,0)="B1"="aa", 
    "aa"+"dd"="B2"="aa",
    "aa"+"dd"="B3"="aa",
    "aa"+"dd"="B4"="aa",
"B1"+(0,-60)="C1"="aa", 
    "aa"+"dd"="C2"="aa",
    "aa"+"dd"="C3"="aa",
    "aa"+"dd"="C4"="aa",
"A1"+(0,-50)="D1"="aa", 
    "aa"+"dd"="D2"="aa",
    "aa"+"dd"="D3"="aa",
    "aa"+"dd"="D4"="aa",
    "aa"+"dd"="D5"="aa",
    "D1"+(40,0)="D0",
"D5"+(20,0)="LL1"="aa",
    "aa"+(0,10)="LL2"="aa",
    "aa"+(0,10)="LL3"="aa",
    "aa"+(0,10)="LL4"="aa",
    "LL1"+"hlen"+"hlen"="RR1"="aa",
    "aa"+(0,10)="RR2"="aa",
    "aa"+(0,10)="RR3"="aa",
    "aa"+(0,10)="RR4"="aa",
"B1"; "C1" **@{-} ?(.5) *@{>},
"B2"; "C2" **@{-} ?(.25) *@{>},
"B3"; "C3" **@{-} ?(.43) *@{>},
"B4"; "C4" **@{-} ?(.44) *@{>},
"D1"; "A1" **@{-} ?(.45) *@{>},
"D2"; "A2" **@{-} ?(.8) *@{>},
"D3"; "A3" **@{-} ?(.55) *@{>},
"D4"; "A4" **@{-} ?(.55) *@{>},
"D5"; "D0" **@{-} ?(.65) *@{>},
"A1"="aa"; "B1" **@{-},
"A2"="aa"; "B2"+(-12,0)="aa" **@{-},
    "aa"+(4,0)="aa"; "B2" **@{-},
"A3"="aa"; "B3"+(-22,0)="aa" **@{-},
    "aa"+(4,0)="aa"; "aa"+(6,0)="aa" **@{-},
    "aa"+(4,0)="aa"; "B3" **@{-},
"A4"="aa"; "B4"+(-32,0)="aa" **@{-},
    "aa"+(4,0)="aa"; "aa"+(6,0)="aa" **@{-},
    "aa"+(4,0)="aa"; "aa"+(6,0)="aa" **@{-},
    "aa"+(4,0)="aa"; "B4" **@{-},
"D1"="aa"; "aa"+(8,0)="aa" **@{-},
    "aa"+(4,0)="aa"; "aa"+(6,0)="aa" **@{-},
    "aa"+(4,0)="aa"; "aa"+(6,0)="aa" **@{-},
    "aa"+(4,0)="aa"; "D0" **@{-},
"D2"="aa"; "aa"+(8,0)="aa" **@{-},
    "aa"+(4,0)="aa"; "aa"+(6,0)="aa" **@{-},
    "aa"+(4,0)="aa"; "aa"+(6,0)="aa" **@{-},
    "aa"+(4,0)="aa"; "LL4" **@{-}, "RR4"; "C1" **@{-},
"D3"="aa"; "aa"+(8,0)="aa" **@{-},
    "aa"+(4,0)="aa"; "aa"+(6,0)="aa" **@{-},
    "aa"+(4,0)="aa"; "LL3" **@{-}, "RR3"; "C2" **@{-},
"D4"="aa"; "aa"+(8,0)="aa" **@{-},
    "aa"+(4,0)="aa"; "LL2" **@{-}, "RR2"; "C3" **@{-},
"D5"; "LL1" **@{-}, "RR1"; "C4" **@{-},
"B1"+(-10,1)="aa", "aa" *!D{9}, 
"aa"+(0,-10)="aa" *!D{8},
"aa"+(0,-10)="aa" *!D{7},
"aa"+(0,-10)="aa" *!D{6},
"aa"-"len"+(-15,-20) *!D{5},
"aa"+(0,-30)="aa" *!D{4},
"aa"+(0,-10)="aa" *!D{3},
"aa"+(0,-10)="aa" *!D{2},
"aa"+(0,-10)="aa" *!D{1},
"D1"+(-.5,5)="aa", "aa" *!R{1}, 
"aa"+(10,0)="aa" *!R{2},
"aa"+(10,0)="aa" *!R{3},
"aa"+(10,0)="aa" *!R{4},
"aa"+(12,-10) *!L{5},
"aa"+"len"+(30,0)="aa" *!R{6},
"aa"+(10,0)="aa" *!R{7},
"aa"+(10,0)="aa" *!R{8},
"aa"+(10,0)="aa" *!R{9},
"LL4"+"vlen";
"LL1"-"vlen" **@{-};
"RR1"-"vlen" **@{-};
"RR4"+"vlen" **@{-};
"LL4"+"vlen" **@{-},
"LL2"+(0,5)+"hlen" *{\alpha},
\end{xy}
$$
\caption{$\PG(\pi;\alpha)$ with $\pi=(5,4,3,2,1)\odot(9,8,7,6)$ and $\alpha\in B_4$}
\label{F:ins}
\end{figure}

\begin{definition}[stabilization of a petal permutation]
Let $\pi=(a_1,\ldots,a_p)$ be a petal permutation, and let $p=2n+1$ and $1\le k\le n$.
Then $a_j=k$ for some $j$, hence
$$\pi=(a_1,\ldots,a_{j-1},k,a_{j+1},\ldots, a_p).$$
For $1\le i\le p$, let $a_i'$ be the integer defined by
$a_i'=a_i$ if $a_i\le k$ and $a_i'=a_i+1$ if $a_i>k$.
The \emph{stabilization of $\pi$ at $k$}, denoted $s_k(\pi)$,
is the $(p+2)$-permutation defined by
$$
s_k(\pi)=(a_1',\ldots,a_{j-1}', k+1,p+2,k,a_{j+1}',\ldots,a_p').
$$
\end{definition}

In other words, $s_k(\pi)$ is obtained from $\pi$ by replacing $a_i\ne k$ with $a_i'$
and then replacing $k$ with $(k+1,p+2,k)$.
(Equivalently, $s_k(\pi)$ is obtained from $\pi=(a_1,\ldots,k,\ldots,a_p)$
by replacing $a_i$ with $a_i'$
and then inserting $(k+1,p+2)$ in the left of $k$.)

\begin{example}
In the following, we show the stabilization $s_3$.
For readability, we underlined the number 3 in $\pi$
and the numbers in $s_3(\pi)$ which replace 3.

Let $\pi=(6,11,5,10,4,9,\underline{3},8,2,7,1)=(6,5,4,\underline{3},2,1)\odot(11,10,9,8,7)$.
Then
$$
s_3(\pi)=(7,12,6,11,5,10,\underline{4,13,3},9,2,8,1)
=(7,6,5,\underline{4,3},2,1)\odot(12,11,10,\underline{13},9,8).
$$
See Figure~\ref{F:st0} for  $\PG(\pi)$ and $\PG(s_3(\pi))$.
(In $\PG(\pi)$, the horizontal edge with height 3 is drawn in double line.
In $\PG(s_3(\pi))$, the horizontal edges with heights 3, 4 and 13
and the vertical edges connecting these horizontal edges are drawn in double line.)

If $\pi=(4,8,\underline{3},7,2,6,1,9,5)=(4,\underline{3},2,1,5)\odot(8,7,6,9)$,
then
$$s_3(\pi)=(5,9,\underline{4,11,3},8,2,7,1,10,6)
=(5,\underline{4,3},2,1,6)\odot(9,\underline{11},8,7,10).
$$
\end{example}

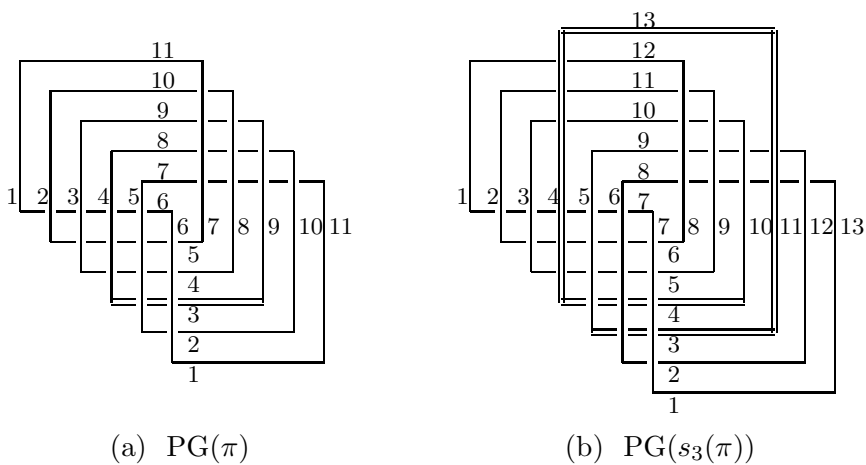
\begin{figure}\footnotesize
$$\begin{array}{ccc}
\begin{xy}/r.4mm/:
(0,0)="or", (10,0)="hd", (0,-10)="vd", (10,-10)="dd",
"or"+(10,-10)="A1"="aa", 
"aa"+"dd"="A2"="aa",
"aa"+"dd"="A3"="aa",
"aa"+"dd"="A4"="aa",
"aa"+"dd"="A5"="aa",
"or"+(70,-10)="B1"="aa", 
"aa"+"dd"="B2"="aa",
"aa"+"dd"="B3"="aa",
"aa"+"dd"="B4"="aa",
"aa"+"dd"="B5"="aa",
"B1"+(-10,-10)="B2L"="aa", 
"aa"+(0,-10)="B3L"="aa",
"aa"+(0,-10)="B4L"="aa",
"aa"+(0,-10)="B5L"="aa",
"or"+(70,-70)="C1"="aa", 
"aa"+"dd"="C2"="aa",
"aa"+"dd"="C3"="aa",
"aa"+"dd"="C4"="aa",
"aa"+"dd"="C5"="aa",
"or"+(10,-60)="D1"="aa", 
"aa"+"dd"="D2"="aa",
"aa"+"dd"="D3"="aa",
"aa"+"dd"="D4"="aa",
"aa"+"dd"="D5"="aa",
"aa"+"dd"="D6"="aa",
"D1"+(50,0)="D0",
"D0"+(10,0)="D1R"="aa", 
"aa"+(0,-10)="D2R"="aa",
"aa"+(0,-10)="D3R"="aa",
"aa"+(0,-10)="D4R"="aa",
"aa"+(0,-10)="D5R"="aa",
"C1"; "B1" **@{-},
"C2"; "B2" **@{-},
"C3"; "B3" **@{-},
"C4"; "B4" **@{-},
"C5"; "B5" **@{-},
"A1"; "D1" **@{-},
"A2"; "D2" **@{-},
"A3"; "D3" **@{-},
"A4"; "D4" **@{-},
"A5"; "D5" **@{-},
"A1"; "B1" **@{-},
"A2"; "B2L" **@{-},
"A3"; "B3L" **@{-},
"A4"; "B4L" **@{-},
"A5"; "B5L" **@{-},
"B2L"="aa"; "aa"+(8,0) **@{-},
"aa"+(10,0)="aa", "aa"+(2,0); "aa"+(10,0) **@{-},
"B3L"="aa"; "aa"+(8,0) **@{-},
"aa"+(10,0)="aa", "aa"+(2,0); "aa"+(8,0) **@{-},
"aa"+(10,0)="aa", "aa"+(2,0); "aa"+(10,0) **@{-},
"B4L"="aa"; "aa"+(8,0) **@{-},
"aa"+(10,0)="aa", "aa"+(2,0); "aa"+(8,0) **@{-},
"aa"+(10,0)="aa", "aa"+(2,0); "aa"+(8,0) **@{-},
"aa"+(10,0)="aa", "aa"+(2,0); "aa"+(10,0) **@{-},
"B5L"="aa"; "aa"+(8,0) **@{-},
"aa"+(10,0)="aa", "aa"+(2,0); "aa"+(8,0) **@{-},
"aa"+(10,0)="aa", "aa"+(2,0); "aa"+(8,0) **@{-},
"aa"+(10,0)="aa", "aa"+(2,0); "aa"+(8,0) **@{-},
"aa"+(10,0)="aa", "aa"+(2,0); "aa"+(10,0) **@{-},
"C2"; "D3R" **@{-},
"C3"; "D4R" **@{=}, 
"C4"; "D5R" **@{-},
"C5"; "D6" **@{-},
"D0"; "D6" **@{-},
"D1"="aa"; "aa"+(8,0) **@{-},
"aa"+(10,0)="aa", "aa"+(2,0); "aa"+(8,0) **@{-},
"aa"+(10,0)="aa", "aa"+(2,0); "aa"+(8,0) **@{-},
"aa"+(10,0)="aa", "aa"+(2,0); "aa"+(8,0) **@{-},
"aa"+(10,0)="aa", "aa"+(2,0); "aa"+(10,0) **@{-},
"D2"="aa"; "aa"+(8,0) **@{-},
"aa"+(10,0)="aa", "aa"+(2,0); "aa"+(8,0) **@{-},
"aa"+(10,0)="aa", "aa"+(2,0); "aa"+(8,0) **@{-},
"aa"+(10,0)="aa", "aa"+(2,0); "aa"+(8,0) **@{-},
"aa"+(10,0)="aa", "aa"+(2,0); "aa"+(10,0) **@{-},
"D3"="aa"; "aa"+(8,0) **@{-},
"aa"+(10,0)="aa", "aa"+(2,0); "aa"+(8,0) **@{-},
"aa"+(10,0)="aa", "aa"+(2,0); "aa"+(8,0) **@{-},
"aa"+(10,0)="aa", "aa"+(2,0); "aa"+(10,0) **@{-},
"D4"="aa"; "aa"+(8,0) **@{=},   
"aa"+(10,0)="aa", "aa"+(2,0); "aa"+(8,0) **@{=},
"aa"+(10,0)="aa", "aa"+(2,0); "aa"+(10,0) **@{=},
"D5"="aa"; "aa"+(8,0) **@{-},
"aa"+(10,0)="aa", "aa"+(2,0); "aa"+(10,0) **@{-},
"B1"+(-13,1)="aa", "aa" *!D{11}, 
"aa"+(0,-10)="aa" *!D{10},
"aa"+(0,-10)="aa" *!D{9},
"aa"+(0,-10)="aa" *!D{8},
"aa"+(0,-10)="aa" *!D{7},
"aa"+(0,-10)="aa" *!D{6},
"aa"+(10,-12.5)="aa" *!U{5},
"aa"+(0,-10)="aa" *!U{4},
"aa"+(0,-10)="aa" *!U{3},
"aa"+(0,-10)="aa" *!U{2},
"aa"+(0,-10)="aa" *!U{1},
"D1"+(-.5, 5)="aa", "aa" *!R{1}, 
"aa"+(10,0)="aa" *!R{2},
"aa"+(10,0)="aa" *!R{3},
"aa"+(10,0)="aa" *!R{4},
"aa"+(10,0)="aa" *!R{5},
"aa"+(12,-10)="aa" *!L{6},
"aa"+(10,0)="aa" *!L{7},
"aa"+(10,0)="aa" *!L{8},
"aa"+(10,0)="aa" *!L{9},
"aa"+(10,0)="aa" *!L{10},
"aa"+(10,0)="aa" *!L{11},
\end{xy}
&\qquad&
\begin{xy}/r.4mm/:
(0,0)="or", (10,0)="hd", (0,-10)="vd", (10,-10)="dd",
"or"+(10,-10)="A1"="aa", 
"aa"+"dd"="A2"="aa",
"aa"+"dd"="A3"="aa",
"A3"+(10,30)="A3n" ,
"A3n"+(0,-10)="A1R" ,
"A1R"+(0,-10)="A2R" ,
"A2R"+(0,-10)="A3R" ,
"aa"+"dd"+(10,0)="A4"="aa",
"aa"+"dd"="A5"="aa",
"or"+(80,-10)="B1"="aa", 
"aa"+"dd"="B2"="aa",
"aa"+"dd"="B3"="aa",
"B3"+(10,30)="B3n" ,
"aa"+"dd"+(10,0)="B4"="aa",
"aa"+"dd"="B5"="aa",
"B1"+(0,-10)="B2L"="aa", 
"aa"+(0,-10)="B3L"="aa",
"aa"+(0,-10)="B4L"="aa",
"aa"+(0,-10)="B5L"="aa",
"or"+(80,-70)="C1"="aa", 
"aa"+"dd"="C2"="aa",
"aa"+"dd"="C3"="aa",
"aa"+"dd"="C3n"="aa",
"aa"+"dd"="C4"="aa",
"aa"+"dd"="C5"="aa",
"or"+(10,-60)="D1"="aa", 
"aa"+"dd"="D2"="aa",
"aa"+"dd"="D3"="aa",
"aa"+"dd"="D3n"="aa",
"aa"+"dd"="D4"="aa",
"aa"+"dd"="D5"="aa",
"aa"+"dd"="D6"="aa",
"D1"+(60,0)="D0",
"D1"+(0,-10)="D2R"="aa",
"aa"+(0,-10)="D3R"="aa",
"aa"+(0,-10)="D4R"="aa",
"aa"+(0,-10)="D5R"="aa",
"C1"; "B1" **@{-},
"C2"; "B2" **@{-},
"C3"; "B3" **@{-},
"C4"; "B4" **@{-},
"C5"; "B5" **@{-},
"C3n"; "B3n" **@{=}, 
"A1"; "D1" **@{-},
"A2"; "D2" **@{-},
"A3"; "D3" **@{-},
"A4"; "D4" **@{-},
"A5"; "D5" **@{-},
"A3n"; "D3n" **@{=}, 
"A3n"; "B3n" **@{=},
"A1"; "A1R"+(-2,0) **@{-}, "A1R"+(2,0); "B1" **@{-},
"A2"; "A2R"+(-2,0) **@{-}, "A2R"+(2,0); "B2L"+(-2,0) **@{-},
    "B2L"+(2,0); "B2" **@{-},
"A3"; "A3R"+(-2,0) **@{-}, "A3R"+(2,0); "B3L"+(-2,0) **@{-},
    "B3L"="aa", "aa"+(2,0); "aa"+(8,0) **@{-},
    "aa"+(10,0)="aa", "aa"+(2,0); "aa"+(10,0) **@{-},
"A4"; "B4L"+(-2,0) **@{-},
    "B4L"="aa", "aa"+(2,0); "aa"+(8,0) **@{-},
    "aa"+(10,0)="aa", "aa"+(2,0); "aa"+(8,0) **@{-},
    "aa"+(10,0)="aa", "aa"+(2,0); "aa"+(8,0) **@{-},
    "aa"+(10,0)="aa", "aa"+(2,0); "aa"+(10,0) **@{-},
"A5"; "B5L"+(-2,0) **@{-},
    "B5L"="aa", "aa"+(2,0); "aa"+(8,0) **@{-},
    "aa"+(10,0)="aa", "aa"+(2,0); "aa"+(8,0) **@{-},
    "aa"+(10,0)="aa", "aa"+(2,0); "aa"+(8,0) **@{-},
    "aa"+(10,0)="aa", "aa"+(2,0); "aa"+(8,0) **@{-},
    "aa"+(10,0)="aa", "aa"+(2,0); "aa"+(10,0) **@{-},
"D0"; "D6" **@{-}; "C5" **@{-},
"D1"="aa"; "aa"+(8,0) **@{-},
    "aa"+(10,0)="aa", "aa"+(2,0); "aa"+(8,0) **@{-},
    "aa"+(10,0)="aa", "aa"+(2,0); "aa"+(8,0) **@{-},
    "aa"+(10,0)="aa", "aa"+(2,0); "aa"+(8,0) **@{-},
    "aa"+(10,0)="aa", "aa"+(2,0); "aa"+(8,0) **@{-},
    "aa"+(10,0)="aa", "aa"+(2,0); "aa"+(10,0) **@{-},
"D2"="aa"; "aa"+(8,0) **@{-},
    "aa"+(10,0)="aa", "aa"+(2,0); "aa"+(8,0) **@{-},
    "aa"+(10,0)="aa", "aa"+(2,0); "aa"+(8,0) **@{-},
    "aa"+(10,0)="aa", "aa"+(2,0); "aa"+(8,0) **@{-},
    "aa"+(10,0)="aa", "aa"+(2,0); "aa"+(8,0) **@{-},
    "aa"+(10,0)="aa", "aa"+(2,0); "aa"+(10,0) **@{-},
"D3"="aa"; "aa"+(8,0) **@{-},
    "aa"+(10,0)="aa", "aa"+(2,0); "aa"+(8,0) **@{-},
    "aa"+(10,0)="aa", "aa"+(2,0); "aa"+(8,0) **@{-},
    "aa"+(10,0)="aa", "aa"+(2,0); "aa"+(8,0) **@{-},
    "aa"+(10,0)="aa", "aa"+(2,0); "C2" **@{-},
"D3n"="aa"; "aa"+(8,0) **@{=},
    "aa"+(10,0)="aa", "aa"+(2,0); "aa"+(8,0) **@{=},
    "aa"+(10,0)="aa", "aa"+(2,0); "aa"+(8,0) **@{=},
    "aa"+(10,0)="aa", "aa"+(2,0); "C3" **@{=},
"D4"="aa"; "aa"+(8,0) **@{=},
    "aa"+(10,0)="aa", "aa"+(2,0); "aa"+(8,0) **@{=},
    "aa"+(10,0)="aa", "aa"+(2,0); "C3n" **@{=},
"D5"="aa"; "aa"+(8,0) **@{-},
    "aa"+(10,0)="aa", "aa"+(2,0); "C4" **@{-},
"B1"+(-13,1)="aa", "aa"+(0,10) *!D{13}, "aa" *!D{12},
"aa"+(0,-10)="aa" *!D{11},
"aa"+(0,-10)="aa" *!D{10},
"aa"+(0,-10)="aa" *!D{9},
"aa"+(0,-10)="aa" *!D{8},
"aa"+(0,-10)="aa" *!D{7},
"aa"+(10,-12.5)="aa" *!U{6},
"aa"+(0,-10)="aa" *!U{5},
"aa"+(0,-10)="aa" *!U{4},
"aa"+(0,-10)="aa" *!U{3},
"aa"+(0,-10)="aa" *!U{2},
"aa"+(0,-10)="aa" *!U{1},
"D1"+(-.5, 5)="aa", "aa" *!R{1}, 
"aa"+(10,0)="aa" *!R{2},
"aa"+(10,0)="aa" *!R{3},
"aa"+(10,0)="aa" *!R{4},
"aa"+(10,0)="aa" *!R{5},
"aa"+(10,0)="aa" *!R{6},
"aa"+(12,-10)="aa" *!L{7},
"aa"+(10,0)="aa" *!L{8},
"aa"+(10,0)="aa" *!L{9},
"aa"+(10,0)="aa" *!L{10},
"aa"+(10,0)="aa" *!L{11},
"aa"+(10,0)="aa" *!L{12},
"aa"+(10,0)="aa" *!L{13},
\end{xy}\\
\mbox{\normalsize (a)\ \ $\PG(\pi)$} &&
\mbox{\normalsize (b)\ \ $\PG(s_3(\pi))$\rule{0pt}{1.5em}}
\end{array}
$$
\caption{$\PG(\pi)$ and $\PG(s_3(\pi))$
for $\pi=(6,5,4,3,2,1)\odot(11,10,9,8,7)$}
\label{F:st0}
\end{figure}

The following lemma shows that $s_k(\pi)$ is easy to compute from $\pi$
when $\pi$ is strongly braided.
The proof is obvious from the definition of $s_k(\pi)$.

\begin{lemma}\label{L:stBr}
Let $p=2n+1$ and let $\pi=(n+1,n,\ldots,1)\odot(a_2,a_4,\ldots,a_{2n})$
be a strongly braided petal permutation of length $p$.
Then, for $1\le k\le n$, the stabilization $s_k(\pi)$ is also strongly braided
and of length $p+2$ such that
$$
s_k(\pi)=(n+2,n+1,\ldots,1)\odot(a_2+1,\ldots,a_{2(n-k+1)}+1, 2n+3,
\underbrace{a_{2(n-k+2)}+1,\ldots,a_{2n}+1}_{k-1}).
$$
In other words, the even terms of $s_k(\pi)$ is obtained from
$(a_2,a_4,\ldots,a_{2n})$ by increasing each term by 1
and then inserting $p+2=2n+3$ at the $k$th position
from the right.
\end{lemma}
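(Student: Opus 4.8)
The plan is to apply the definition of the stabilization $s_k$ directly to the strongly braided permutation $\pi$ and then read off, position by position, what happens to the odd-indexed and the even-indexed entries. Since the lemma asserts that $s_k(\pi)$ is again strongly braided, the real content is just to verify that the odd entries of $s_k(\pi)$ are $(n+2,n+1,\ldots,1)$ and that the even entries are the claimed sequence; everything follows from careful index bookkeeping.

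First I would locate the entry of $\pi$ equal to $k$. Because $\pi$ is strongly braided, its odd-indexed entries are $a_{2i+1}=n+1-i$, so $a_j=k$ exactly when $j=2(n+1-k)+1=2n+3-2k$; note that $j$ is odd. By definition, $s_k(\pi)$ is obtained from $\pi$ by replacing each entry $a_i\ne k$ by $a_i'$, where $a_i'=a_i$ if $a_i\le k$ and $a_i'=a_i+1$ if $a_i>k$, and then replacing the entry $k$ at position $j$ by the length-$3$ block $(k+1,\,p+2,\,k)$. Since a length-$3$ block is inserted in place of a single odd-position entry, the new length is $p+2=2(n+1)+1$, the parities of positions before $j$ are unchanged, and those after $j$ are shifted by an even amount; in particular $k+1$ and $k$ land at the odd positions $j$ and $j+2$, while $p+2=2n+3$ lands at the even position $j+1$. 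Thus $s_k(\pi)$ is of the form $(\text{odd part})\odot(\text{even part})$ with odd part of length $n+2$ and even part of length $n+1$.

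Next I would compute the two parts separately. The odd-indexed entries of $\pi$ are $n+1,n,\ldots,1$; those strictly before position $j$ are $n+1,n,\ldots,k+1$, all exceeding $k$, hence relabeled to $n+2,n+1,\ldots,k+2$; those strictly after position $j$ are $k-1,\ldots,1$, all $\le k$, hence unchanged; and the inserted block contributes $k+1$ then $k$ at the odd positions $j,j+2$. Reading left to right gives $n+2,n+1,\ldots,k+2,k+1,k,k-1,\ldots,1=(n+2,n+1,\ldots,1)$, so $s_k(\pi)$ is strongly braided. For the even part: by the braided hypothesis every even-indexed entry of $\pi$ is $\ge n+2>k$, so each is relabeled as $a_{2\ell}+1$; the even entries coming from positions before $j$ are $a_2+1,\ldots,a_{j-1}+1=a_{2(n-k+1)}+1$, the inserted entry $p+2=2n+3$ sits at position $j+1$, and the even entries coming from positions after $j$ are $a_{j+1}+1=a_{2(n-k+2)}+1,\ldots,a_{2n}+1$. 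This is precisely the asserted even part, and since it has $n+1$ terms with $2n+3$ occupying the $(n-k+2)$-nd slot from the left, $2n+3$ occupies the $k$-th slot from the right.

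The only thing requiring care — and it is routine arithmetic rather than a genuine obstacle — is keeping the indices straight: checking that $j=2(n-k+1)+1$ is odd, that the even entries of $\pi$ split across position $j$ exactly as $a_2,\ldots,a_{2(n-k+1)}$ versus $a_{2(n-k+2)},\ldots,a_{2n}$, and that the relabeling $a_i\mapsto a_i'$ changes an odd-indexed entry precisely when it is $>k$ (which happens exactly for those before position $j$). A figure such as Figure~\ref{F:st0} makes all of this transparent, which is why the claim is essentially immediate from the definition of $s_k$.
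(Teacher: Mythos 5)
Your proof is correct and is essentially the paper's own argument: the paper simply declares the lemma obvious from the definition of $s_k(\pi)$, and your write-up just carries out that direct definitional bookkeeping (locating the odd position $j=2(n-k+1)+1$ of the entry $k$, noting all even entries exceed $k$, and tracking the relabeling and the inserted block) in full detail.
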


\begin{proposition}\label{T:br}
Let $\pi$ be a strongly braided petal permutation of length $2n+1$.
Let $2\le k\le n$ and $\alpha\in B_k$.
Then $K(\pi;\alpha U_k)$ is isotopic to $K(s_k(\pi);\alpha)$.
\end{proposition}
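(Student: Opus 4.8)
The plan is to pass from petal grid diagrams to closed braid diagrams and to reduce the statement to a single Markov stabilization. Since $\pi$ is strongly braided, $\PG(\pi)$ is a closed braid diagram on $n$ strands, and inserting a $k$-braid into its lowest $k$ strands simply multiplies the underlying braid by $\beta\splitBr 1_{n-k}$ before re-closing. Generalizing Remark~\ref{rmk}, tracing through the grid-to-braid dictionary shows that $K(\pi;\alpha)$ is the closure of a braid of the form $\delta_n^{\,n+1}\,P_\pi\,(\alpha\splitBr 1_{n-k})$, where $P_\pi$ is a positive braid recording the rearrangement of the even entries of $\pi$, normalized so that $P_{\pi_n}=1$ (the case $\pi=\pi_n$ being exactly Remark~\ref{rmk}(ii), where $\delta_n^{\,n+1}=\delta_n\Delta_n^2$). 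By Lemma~\ref{L:stBr}, $s_k(\pi)$ is again strongly braided, of length $2n+3$, so $\PG(s_k(\pi))$ is a closed braid on $n+1$ strands and $K(s_k(\pi);\alpha)$ is the closure of $\delta_{n+1}^{\,n+2}\,P_{s_k(\pi)}\,(\alpha\splitBr 1_{n+1-k})$. The proposition thereby becomes an equality of braid closures, and it suffices to recognize $\delta_{n+1}^{\,n+2}P_{s_k(\pi)}$ as a stabilization of $\delta_n^{\,n+1}P_\pi$ whose cost on the lowest $k$ strands is exactly $U_k$.

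The second step is to read $P_{s_k(\pi)}$ off from $P_\pi$ using Lemma~\ref{L:stBr}: the even part of $s_k(\pi)$ is the even part of $\pi$ with every entry raised by one and the maximal value $2n+3$ inserted at the $k$-th position from the right. By Lemma~\ref{L:Pdec} this means that the new, top-valued strand of the $(n+1)$-braid enters $k-1$ positions below the top and rises over the $k-1$ strands above it; that is, $P_{s_k(\pi)}=(1_{n-k+1}\splitBr E_k)\,(P_\pi\splitBr 1_1)$. Combining this with $\delta_{n+1}=\sigma_n\delta_n$ and $\Delta_{n+1}=\Delta_n\delta_{n+1}$, and repeatedly applying the block-commutation identities of Lemmas~\ref{L:spX} and~\ref{L:X}(iii) together with Lemma~\ref{L:DEU}, one can rewrite the word $\delta_{n+1}^{\,n+2}P_{s_k(\pi)}(\alpha\splitBr 1_{n+1-k})$ so that every crossing involving strand $n+1$ occurs in one consecutive block, sitting directly beneath the block on which $\alpha$ has been inserted.

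The third step is the destabilization. After a cyclic rotation of the closed braid a Markov destabilization removes strand $n+1$; the net effect on strands $1,\dots,k$ of first introducing this strand (inside $\delta_{n+1}$ and the $E_k$-arc of $P_{s_k(\pi)}$) and then removing it is precisely multiplication by $D_kE_k=U_k$. Since $\alpha,D_k,E_k\in B_k$ we have $(\alpha\splitBr 1_{n-k})(U_k\splitBr 1_{n-k})=(\alpha U_k)\splitBr 1_{n-k}$, so the closure becomes $\widehat{\,\delta_n^{\,n+1}P_\pi\,((\alpha U_k)\splitBr 1_{n-k})\,}=K(\pi;\alpha U_k)$, which is the claim.

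The main obstacle is the bookkeeping in the last two steps: pinning down $P_\pi$ for a general strongly braided $\pi$, and then tracking the braid word through the stabilization and the destabilization carefully enough that the residual factor emerges as $D_kE_k=U_k$ rather than $E_kD_k$ or a conjugate of it. Reconciling these alternatives is exactly what forces the particular cyclic rotation of the closed braid and the careful use of Lemmas~\ref{L:DEU}, \ref{L:X} and~\ref{L:spX}. A more hands-on alternative, in the figure-driven style of the paper, is to argue directly on the diagrams: the two new horizontal edges of $\PG(s_k(\pi))$, at heights $k+1$ and $2n+3$, together with the two new vertical edges joining them, are the ``unfolded'' image of the clasp $U_k=D_kE_k$ inserted into $\PG(\pi)$, so that $\PG(\pi;\alpha U_k)$ and $\PG(s_k(\pi);\alpha)$ are carried into one another by an explicit finite sequence of planar isotopies and Reidemeister moves; there the only difficulty is to present that sequence rigorously.
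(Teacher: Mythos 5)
You never establish the structural claim on which your whole reduction rests, and as stated it is false. For $n=2$ the permutation $\pi=(3,2,1)\odot(4,5)=(3,4,2,5,1)$ is strongly braided, and $K(\pi)$ is the unknot: $\PG(\pi)$ has exactly three crossings, at $(2,3)$, $(2,4)$ and $(3,2)$, of signs $+,-,+$; the crossings $(2,4)$ and $(3,2)$ cobound an empty bigon, so a Reidemeister II move followed by a Reidemeister I move trivializes the diagram (equivalently, the underlying closed $2$-braid has exponent sum $1$). But the closure of $\delta_2^{3}P$ with $P$ a positive $2$-braid is $T_{2,m}$ with $m\ge 3$, never the unknot, so $K(\pi;\alpha)$ is not in general the closure of $\delta_n^{n+1}P_\pi(\alpha\splitBr 1)$ with $P_\pi$ positive. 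The source of the failure is visible in the grid: for a general strongly braided $\pi$ (including the permutations $s_k(\pi)$ that any induction based on your scheme must handle) the vertical edges in columns $1,\dots,n$ cross the long top horizontal edges, and all such crossings are negative, so the braid you would have to track is not $\delta^{n+1}$ times a positive word; in particular the identity $P_{s_k(\pi)}=(1\splitBr E_k)(P_\pi\splitBr 1)$, which you attribute to Lemma~\ref{L:Pdec}, is neither a consequence of that lemma nor verified.

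Beyond this, the step that carries the actual content of Proposition~\ref{T:br} is only asserted: you state that after a suitable cyclic rotation and one Markov destabilization the residual factor on the lowest $k$ strands is $D_kE_k=U_k$, and you yourself flag as unresolved whether it comes out as $D_kE_k$, $E_kD_k$ or a conjugate; nothing in the proposal decides this. So the main route is a plan whose pivotal computations are missing and one of whose stated inputs is wrong. The diagrammatic alternative sketched in your last sentences is essentially the paper's own argument (slide the top horizontal edge of $\PG(s_k(\pi);\alpha)$ to the bottom and then move the two distinguished vertical edges, as in Figures~\ref{F:st1} and~\ref{F:st2}, so that the two new edges become the inserted clasp $U_k$), but you do not carry it out either, so as written neither route proves the proposition.
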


The above proposition is proved by Lee and Jin~\cite{LJ21}
for the case where $\pi=(n+1,n,\ldots,1)\odot(2n+1,2n,\ldots,n+2)$,
$\alpha$ is the identity, $n$ is odd, and $k=\frac{n+1}2$.
Their proof works well for more general case as in the above proposition.
For readers' convenience, we include an idea of proof.

\begin{proof}[Idea of Proof]
Figure~\ref{F:st1} shows an example of the diagrams
$\PG(\pi;\alpha U_k)$ and $\PG(s_k(\pi);\alpha)$.
The diagram $\PG(s_k(\pi);\alpha)$ can be isotoped to the diagram in Figure~\ref{F:st2}(a)
by moving the highest horizontal edge to the bottom.
Then it can be isotoped to the diagram in Figure~\ref{F:st2}(b)
by moving the double lined vertical edges.
The diagram in Figure~\ref{F:st2}(b)
is isotopic to the diagram $\PG(\pi;\alpha U_k)$ in Figure~\ref{F:st1}(a).
\end{proof}

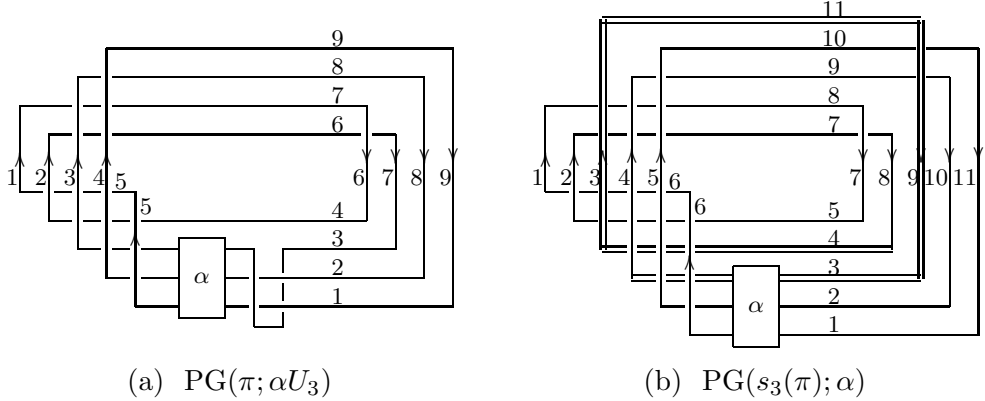
\begin{figure}\footnotesize
$$
\begin{array}{ccc}
\begin{xy}/r.38mm/:
(0,0)="or", (10,0)="hd", (0,-10)="vd", (10,-10)="dd", (60,0)="len",
(0,4)="vlen", (8,0)="hlen",
"or"+(0,-20)="A1"="aa", 
    "aa"+"dd"="A2"="aa",
    "aa"+(10,20)="A3"="aa",
    "aa"+(10,10)="A4"="aa",
"A1"+"len"+(60,0)="B1"="aa", 
    "aa"+"dd"="B2"="aa",
    "aa"+(10,20)="B3"="aa",
    "aa"+(10,10)="B4"="aa",
"B1"+(0,-40)="C1"="aa", 
    "aa"+"dd"="C2"="aa",
    "aa"+"dd"="C3"="aa",
    "aa"+"dd"="C4"="aa",
"A1"+(0,-30)="D1"="aa", 
    "aa"+"dd"="D2"="aa",
    "aa"+"dd"="D3"="aa",
    "aa"+"dd"="D4"="aa",
    "aa"+"dd"="D5"="aa",
    "D1"+(40,0)="D0",
"D5"+(15,0)="LL1"="aa",
    "aa"+(0,10)="LL2"="aa",
    "aa"+(0,10)="LL3"="aa",
    "LL1"+"hlen"+"hlen"="RR1"="aa",
    "aa"+(0,10)="RR2"="aa",
    "aa"+(0,10)="RR3"="aa",
"B1"; "C1" **@{-} ?(.5) *@{>},
"B2"; "C2" **@{-} ?(.25) *@{>},
"B3"; "C3" **@{-} ?(.43) *@{>},
"B4"; "C4" **@{-} ?(.44) *@{>},
"D1"; "A1" **@{-} ?(.45) *@{>},
"D2"; "A2" **@{-} ?(.8) *@{>},
"D3"; "A3" **@{-} ?(.55) *@{>},
"D4"; "A4" **@{-} ?(.55) *@{>},
"D5"; "D0" **@{-} ?(.65) *@{>},
"A1"="aa"; "aa"+(18,0)="aa" **@{-},
    "aa"+(4,0)="aa"; "aa"+(6,0)="aa" **@{-},
    "aa"+(4,0)="aa"; "B1" **@{-},
"A2"="aa"; "aa"+(8,0)="aa" **@{-},
    "aa"+(4,0)="aa"; "aa"+(6,0)="aa" **@{-},
    "aa"+(4,0)="aa"; "B2"-(12,0)="aa" **@{-},
    "aa"+(4,0)="aa"; "B2" **@{-},
"A3"="aa"; "aa"+(8,0)="aa" **@{-},
    "aa"+(4,0)="aa"; "B3" **@{-},
"A4"="aa"; "B4" **@{-},
"D1"="aa"; "aa"+(8,0)="aa" **@{-},
    "aa"+(4,0)="aa"; "aa"+(6,0)="aa" **@{-},
    "aa"+(4,0)="aa"; "aa"+(6,0)="aa" **@{-},
    "aa"+(4,0)="aa"; "D0" **@{-},
"D2"="aa"; "aa"+(8,0)="aa" **@{-},
    "aa"+(4,0)="aa"; "aa"+(6,0)="aa" **@{-},
    "aa"+(4,0)="aa"; "aa"+(6,0)="aa" **@{-},
    "aa"+(4,0)="aa"; "C1" **@{-},
"D3"="aa"; "aa"+(8,0)="aa" **@{-},
    "aa"+(4,0)="aa"; "aa"+(6,0)="aa" **@{-},
    "aa"+(4,0)="aa"; "LL3" **@{-},
    "RR3"="aa"; "aa"+(10,0)="aa" **@{-};
    "aa"+(0,-27)="aa" **@{-}; "aa"+(10,0)="aa" **@{-};
    "aa"+(0,5)="aa" **@{-}, "aa"+(0,4)="aa";
    "aa"+(0,6)="aa" **@{-}, "aa"+(0,4)="aa";
    "aa"+(0,8)="aa" **@{-}; "C2" **@{-},
"D4"="aa"; "aa"+(8,0)="aa" **@{-},
    "aa"+(4,0)="aa"; "LL2" **@{-}, "RR2"="aa";
    "aa"+(8,0)="aa" **@{-}, "aa"+(4,0)="aa";"C3" **@{-},
"D5"; "LL1" **@{-}, "RR1"="aa"; "aa"+(8,0)="aa" **@{-},
    "aa"+(4,0)="aa"; "C4" **@{-},
"B1"+(-10,21)="aa", "aa" *!D{9}, 
"aa"+(0,-10)="aa" *!D{8},
"aa"+(0,-10)="aa" *!D{7},
"aa"+(0,-10)="aa" *!D{6},
"aa"-"len"+(-15,-20) *!D{5},
"aa"+(0,-30)="aa" *!D{4},
"aa"+(0,-10)="aa" *!D{3},
"aa"+(0,-10)="aa" *!D{2},
"aa"+(0,-10)="aa" *!D{1},
"D1"+(-.5,5)="aa", "aa" *!R{1}, 
"aa"+(10,0)="aa" *!R{2},
"aa"+(10,0)="aa" *!R{3},
"aa"+(10,0)="aa" *!R{4},
"aa"+(12,-10) *!L{5},
"aa"+"len"+(30,0)="aa" *!R{6},
"aa"+(10,0)="aa" *!R{7},
"aa"+(10,0)="aa" *!R{8},
"aa"+(10,0)="aa" *!R{9},
"LL3"+"vlen";
"LL1"-"vlen" **@{-};
"RR1"-"vlen" **@{-};
"RR3"+"vlen" **@{-};
"LL3"+"vlen" **@{-},
"LL2"+"hlen" *{\alpha},
\end{xy}
&\quad&
\begin{xy}/r.38mm/:
(0,0)="or", (10,0)="hd", (0,-10)="vd", (10,-10)="dd", (50,0)="len",
(0,4)="vlen", (8,0)="hlen",
"or"+(0,-20)="A1"="aa", 
    "aa"+"dd"="A2"="aa",
    "aa"+(10, 40)="A3"="aa",
    "aa"+(10,-20)="A4"="aa",
    "aa"+(10, 10)="A5"="aa",
"A1"+"len"+(60,0)="B1"="aa", 
    "aa"+"dd"="B2"="aa",
    "aa"+(10, 40)="B3"="aa",
    "aa"+(10,-20)="B4"="aa",
    "aa"+(10, 10)="B5"="aa",
"B1"+(0,-40)="C1"="aa", 
    "aa"+"dd"="C2"="aa",
    "aa"+"dd"="C3"="aa",
    "aa"+"dd"="C4"="aa",
    "aa"+"dd"="C5"="aa",
"A1"+(0,-30)="D1"="aa", 
    "aa"+"dd"="D2"="aa",
    "aa"+"dd"="D3"="aa",
    "aa"+"dd"="D4"="aa",
    "aa"+"dd"="D5"="aa",
    "aa"+"dd"="D6"="aa",
    "D1"+(50,0)="D0",
"D6"+(15,0)="LL1"="aa",
    "aa"+(0,10)="LL2"="aa",
    "aa"+(0,10)="LL3"="aa",
    "LL1"+"hlen"+"hlen"="RR1"="aa",
    "aa"+(0,10)="RR2"="aa",
    "aa"+(0,10)="RR3"="aa",
"B1"; "C1" **@{-} ?(.5) *@{>},
"B2"; "C2" **@{-} ?(.25) *@{>},
"B3"; "C3" **@{=} ?(.55) *@{>},
"B4"; "C4" **@{-} ?(.37) *@{>},
"B5"; "C5" **@{-} ?(.39) *@{>},
"D1"; "A1" **@{-} ?(.45) *@{>},
"D2"; "A2" **@{-} ?(.8) *@{>},
"D3"; "A3" **@{=} ?(.43) *@{>},
"D4"; "A4" **@{-} ?(.64) *@{>},
"D5"; "A5" **@{-} ?(.61) *@{>},
"D6"; "D0" **@{-} ?(.55) *@{>},
"A1"="aa"; "aa"+(18,0)="aa" **@{-},
    "aa"+(4,0)="aa"; "aa"+(6,0)="aa" **@{-},
    "aa"+(4,0)="aa"; "aa"+(6,0)="aa" **@{-},
    "aa"+(4,0)="aa"; "B1" **@{-},
"A2"="aa"; "aa"+(8,0)="aa" **@{-},
    "aa"+(4,0)="aa"; "aa"+(6,0)="aa" **@{-},
    "aa"+(4,0)="aa"; "aa"+(6,0)="aa" **@{-},
    "aa"+(4,0)="aa"; "B2"-(12,0)="aa" **@{-},
    "aa"+(4,0)="aa"; "B2" **@{-},
"A3"; "B3" **@{=},
"A4"="aa"; "aa"+(8,0)="aa" **@{-},
    "aa"+(4,0)="aa"; "B4"-(12,0)="aa" **@{-},
    "aa"+(4,0)="aa"; "B4" **@{-},
"A5"="aa"; "B5"-(22,0)="aa" **@{-},
    "aa"+(4,0)="aa"; "B5" **@{-},
"D1"="aa"; "aa"+(8,0)="aa" **@{-},
    "aa"+(4,0)="aa"; "aa"+(6,0)="aa" **@{-},
    "aa"+(4,0)="aa"; "aa"+(6,0)="aa" **@{-},
    "aa"+(4,0)="aa"; "aa"+(6,0)="aa" **@{-},
    "aa"+(4,0)="aa"; "D0" **@{-},
"D2"="aa"; "aa"+(8,0)="aa" **@{-},
    "aa"+(4,0)="aa"; "aa"+(6,0)="aa" **@{-},
    "aa"+(4,0)="aa"; "aa"+(6,0)="aa" **@{-},
    "aa"+(4,0)="aa"; "aa"+(6,0)="aa" **@{-},
    "aa"+(4,0)="aa"; "C1" **@{-},
"D3"="aa"; "aa"+(8,0)="aa" **@{=},
    "aa"+(4,0)="aa"; "aa"+(6,0)="aa" **@{=},
    "aa"+(4,0)="aa"; "aa"+(6,0)="aa" **@{=},
    "aa"+(4,0)="aa"; "C2" **@{=},
"D4"="aa"; "aa"+(8,0)="aa" **@{=},
    "aa"+(4,0)="aa"; "aa"+(6,0)="aa" **@{=},
    "aa"+(4,0)="aa"; "LL3" **@{=},
    "RR3"="aa"; "C3" **@{=},
"D5"="aa"; "aa"+(8,0)="aa" **@{-},
    "aa"+(4,0)="aa"; "LL2" **@{-}, "RR2"="aa"; "C4" **@{-},
"D6"; "LL1" **@{-}, "RR1"="aa"; "C5" **@{-},
"B1"+(-10,31)="aa", "aa" *!D{11}, 
"aa"+(0,-10)="aa" *!D{10},
"aa"+(0,-10)="aa" *!D{9},
"aa"+(0,-10)="aa" *!D{8},
"aa"+(0,-10)="aa" *!D{7},
"aa"-"len"+(-5,-20) *!D{6},
"aa"+(0,-30)="aa" *!D{5},
"aa"+(0,-10)="aa" *!D{4},
"aa"+(0,-10)="aa" *!D{3},
"aa"+(0,-10)="aa" *!D{2},
"aa"+(0,-10)="aa" *!D{1},
"D1"+(-.5,5)="aa", "aa" *!R{1}, 
"aa"+(10,0)="aa" *!R{2},
"aa"+(10,0)="aa" *!R{3},
"aa"+(10,0)="aa" *!R{4},
"aa"+(10,0)="aa" *!R{5},
"aa"+(12,-10) *!L{6},
"aa"+"len"+(20,0)="aa" *!R{7},
"aa"+(10,0)="aa" *!R{8},
"aa"+(10,0)="aa" *!R{9},
"aa"+(10,0)="aa" *!R{10},
"aa"+(10,0)="aa" *!R{11},
"LL3"+"vlen";
"LL1"-"vlen" **@{-};
"RR1"-"vlen" **@{-};
"RR3"+"vlen" **@{-};
"LL3"+"vlen" **@{-},
"LL2"+"hlen" *{\alpha},
\end{xy}\\
\mbox{\normalsize (a)\ \ $\PG(\pi;\alpha U_3)$} &&
\mbox{\normalsize (b)\ \ $\PG(s_3(\pi); \alpha)$\rule{0pt}{1.5em}}
\end{array}
$$
\caption{$\PG(\pi;\alpha U_3)$ and $\PG(s_3(\pi); \alpha)$,
where $\alpha\in B_3$ and $\pi=(5,4,3,2,1)\odot(7,6,8,9)$}
\label{F:st1}
\end{figure}

\begin{figure}\footnotesize
$$\begin{array}{ccc}
\begin{xy}/r.38mm/:
(0,0)="or", (10,0)="hd", (0,-10)="vd", (10,-10)="dd", (70,0)="len",
(0,4)="vlen", (8,0)="hlen",
"or"+(0,-20)="A1"="aa", 
    "aa"+"dd"="A2"="aa",
    "aa"+(10, 40)="A3"="aa",
    "aa"+(10,-20)="A4"="aa",
    "aa"+(10, 10)="A5"="aa",
"A1"+"len"+(60,0)="B1"="aa", 
    "aa"+"dd"="B2"="aa",
    "aa"+(10, 40)="B3"="aa",
    "aa"+(10,-20)="B4"="aa",
    "aa"+(10, 10)="B5"="aa",
"B1"+(0,-40)="C1"="aa", 
    "aa"+"dd"="C2"="aa",
    "aa"+"dd"="C3"="aa",
    "aa"+"dd"="C4"="aa",
    "aa"+"dd"="C5"="aa",
"A1"+(0,-30)="D1"="aa", 
    "aa"+"dd"="D2"="aa",
    "aa"+"dd"="D3"="aa",
    "aa"+"dd"="D4"="aa",
    "aa"+"dd"="D5"="aa",
    "aa"+"dd"="D6"="aa",
    "D1"+(50,0)="D0",
"D6"+(15,0)="LL1"="aa",
    "aa"+(0,10)="LL2"="aa",
    "aa"+(0,10)="LL3"="aa",
    "LL1"+"hlen"+"hlen"="RR1"="aa",
    "aa"+(0,10)="RR2"="aa",
    "aa"+(0,10)="RR3"="aa",
"D3"+(0,-40)="A3",
"C3"+(0,-30)="B3",
"B1"; "C1" **@{-} ?(.5) *@{>},
"B2"; "C2" **@{-} ?(.25) *@{>},
"B3"; "C3" **@{=}, 
"B4"; "C4" **@{-} ?(.37) *@{>},
"B5"; "C5" **@{-} ?(.39) *@{>},
"D1"; "A1" **@{-} ?(.45) *@{>},
"D2"; "A2" **@{-} ?(.8) *@{>},
"D3"; "A3" **@{=}, 
"D4"; "A4" **@{-} ?(.64) *@{>},
"D5"; "A5" **@{-} ?(.61) *@{>},
"D6"; "D0" **@{-} ?(.55) *@{>},
"A1"="aa"; "aa"+(28,0)="aa" **@{-},
    "aa"+(4,0)="aa"; "aa"+(6,0)="aa" **@{-},
    "aa"+(4,0)="aa"; "B1" **@{-},
"A2"="aa"; "aa"+(18,0)="aa" **@{-},
    "aa"+(4,0)="aa"; "aa"+(6,0)="aa" **@{-},
    "aa"+(4,0)="aa"; "B2"-(12,0)="aa" **@{-},
    "aa"+(4,0)="aa"; "B2" **@{-},
"A3"; "B3" **@{=},
"A4"="aa"; "aa"+(8,0)="aa" **@{-},
    "aa"+(4,0)="aa"; "B4" **@{-},
"A5"; "B5" **@{-},
"D1"="aa"; "aa"+(8,0)="aa" **@{-},
    "aa"+(4,0)="aa"; "aa"+(16,0)="aa" **@{-},
    "aa"+(4,0)="aa"; "aa"+(6,0)="aa" **@{-},
    "aa"+(4,0)="aa"; "D0" **@{-},
"D2"="aa"; "aa"+(18,0)="aa" **@{-},
    "aa"+(4,0)="aa"; "aa"+(6,0)="aa" **@{-},
    "aa"+(4,0)="aa"; "aa"+(6,0)="aa" **@{-},
    "aa"+(4,0)="aa"; "C1" **@{-},
"D3"="aa"; "aa"+(8,0)="aa" **@{=},
    "aa"+(4,0)="aa"; "aa"+(6,0)="aa" **@{=},
    "aa"+(4,0)="aa"; "aa"+(6,0)="aa" **@{=},
    "aa"+(4,0)="aa"; "C2" **@{=},
"D4"="aa"; "aa"+(8,0)="aa" **@{=},
    "aa"+(4,0)="aa"; "aa"+(6,0)="aa" **@{=},
    "aa"+(4,0)="aa"; "LL3" **@{=},
    "RR3"="aa"; "C3" **@{=},
"D5"="aa"; "aa"+(8,0)="aa" **@{-},
    "aa"+(4,0)="aa"; "LL2" **@{-}, "RR2"="aa";
    "C4"-(13,0)="aa" **@{-},
    "aa"+(6,0)="aa"; "C4" **@{-},
"D6"; "LL1" **@{-}, "RR1"="aa";
    "C5"-(23,0)="aa" **@{-},
    "aa"+(6,0)="aa"; "C5" **@{-},
"B1"+(-10,21)="aa" *!D{10}, 
"aa"+(0,-10)="aa" *!D{9},
"aa"+(0,-10)="aa" *!D{8},
"aa"+(0,-10)="aa" *!D{7},
"aa"-"len"+(-5,-20) *!D{6},
"aa"+(0,-30)="aa" *!D{5},
"aa"+(0,-10)="aa" *!D{4},
"aa"+(0,-10)="aa" *!D{3},
"aa"+(0,-10)="aa" *!D{2},
"aa"+(0,-10)="aa" *!D{1},
"D1"+(-.5,5)="aa", "aa" *!R{1}, 
"aa"+(10,0)="aa" *!R{2},
"aa"+(10,-50) *!R{3},
"aa"+(20,0)="aa" *!R{4},
"aa"+(10,0)="aa" *!R{5},
"aa"+(12,-10) *!L{6},
"aa"+"len"+(20,0)="aa" *!R{7},
"aa"+(10,0)="aa" *!R{8},
"aa"+(10,-50)*!R{9},
"aa"+(20,0)="aa" *!R{10},
"aa"+(10,0)="aa" *!R{11},
"LL3"+"vlen";
"LL1"-"vlen" **@{-};
"RR1"-"vlen" **@{-};
"RR3"+"vlen" **@{-};
"LL3"+"vlen" **@{-},
"LL2"+"hlen" *{\alpha},
\end{xy}
&\quad&
\begin{xy}/r.38mm/:
(0,0)="or", (10,0)="hd", (0,-10)="vd", (10,-10)="dd", (70,0)="len",
(0,4)="vlen", (8,0)="hlen",
"or"+(0,-20)="A1"="aa", 
    "aa"+"dd"="A2"="aa",
    "aa"+(10, 40)="A3"="aa",
    "aa"+(10,-20)="A4"="aa",
    "aa"+(10, 10)="A5"="aa",
"A1"+"len"+(60,0)="B1"="aa", 
    "aa"+"dd"="B2"="aa",
    "aa"+(10, 40)="B3"="aa",
    "aa"+(10,-20)="B4"="aa",
    "aa"+(10, 10)="B5"="aa",
"B1"+(0,-40)="C1"="aa", 
    "aa"+"dd"="C2"="aa",
    "aa"+"dd"="C3"="aa",
    "aa"+"dd"="C4"="aa",
    "aa"+"dd"="C5"="aa",
"A1"+(0,-30)="D1"="aa", 
    "aa"+"dd"="D2"="aa",
    "aa"+"dd"="D3"="aa",
    "aa"+"dd"="D4"="aa",
    "aa"+"dd"="D5"="aa",
    "aa"+"dd"="D6"="aa",
    "D1"+(50,0)="D0",
"D6"+(15,0)="LL1"="aa",
    "aa"+(0,10)="LL2"="aa",
    "aa"+(0,10)="LL3"="aa",
    "LL1"+"hlen"+"hlen"="RR1"="aa",
    "aa"+(0,10)="RR2"="aa",
    "aa"+(0,10)="RR3"="aa",
"B1"; "C1" **@{-} ?(.5) *@{>},
"B2"; "C2" **@{-} ?(.25) *@{>},
"B4"; "C4" **@{-} ?(.37) *@{>},
"B5"; "C5" **@{-} ?(.39) *@{>},
"D1"; "A1" **@{-} ?(.45) *@{>},
"D2"; "A2" **@{-} ?(.8) *@{>},
"D4"; "A4" **@{-} ?(.64) *@{>},
"D5"; "A5" **@{-} ?(.61) *@{>},
"D6"; "D0" **@{-} ?(.55) *@{>},
"A1"="aa"; "aa"+(28,0)="aa" **@{-},
    "aa"+(4,0)="aa"; "aa"+(6,0)="aa" **@{-},
    "aa"+(4,0)="aa"; "B1" **@{-},
"A2"="aa"; "aa"+(18,0)="aa" **@{-},
    "aa"+(4,0)="aa"; "aa"+(6,0)="aa" **@{-},
    "aa"+(4,0)="aa"; "B2"-(12,0)="aa" **@{-},
    "aa"+(4,0)="aa"; "B2" **@{-},
"A4"="aa"; "aa"+(8,0)="aa" **@{-},
    "aa"+(4,0)="aa"; "B4" **@{-},
"A5"; "B5" **@{-},
"D1"="aa"; "aa"+(8,0)="aa" **@{-},
    "aa"+(4,0)="aa"; "aa"+(16,0)="aa" **@{-},
    "aa"+(4,0)="aa"; "aa"+(6,0)="aa" **@{-},
    "aa"+(4,0)="aa"; "D0" **@{-},
"D2"="aa"; "aa"+(18,0)="aa" **@{-},
    "aa"+(4,0)="aa"; "aa"+(6,0)="aa" **@{-},
    "aa"+(4,0)="aa"; "aa"+(6,0)="aa" **@{-},
    "aa"+(4,0)="aa"; "C1" **@{-},
"D4"="aa"; "aa"+(8,0)="aa" **@{=},
    "aa"+(4,0)="aa"; "aa"+(6,0)="aa" **@{=},
    "aa"+(4,0)="aa"; "LL3" **@{=},
    "RR3"="aa"; "aa"+(10,0)="aa" **@{=}; "aa"+(0,-30)="aa" **@{=};
    "aa"+(10,0)="aa" **@{=};
    "aa"+(0,7)="aa" **@{=}, "aa"+(0,6)="aa";
    "aa"+(0,4)="aa" **@{=}, "aa"+(0,6)="aa";
    "aa"+(0,17)="aa" **@{=};
    "C2" **@{=},
"D5"="aa"; "aa"+(8,0)="aa" **@{-},
    "aa"+(4,0)="aa"; "LL2" **@{-}, "RR2"="aa";
    "aa"+(7,0)="aa" **@{-},
    "aa"+(6,0)="aa"; "C4" **@{-},
"D6"; "LL1" **@{-}, "RR1"="aa";
    "aa"+(7,0)="aa" **@{-},
    "aa"+(6,0)="aa"; "C5" **@{-},
"B1"+(-10,21)="aa" *!D{10}, 
"aa"+(0,-10)="aa" *!D{9},
"aa"+(0,-10)="aa" *!D{8},
"aa"+(0,-10)="aa" *!D{7},
"aa"-"len"+(-5,-20) *!D{6},
"aa"+(0,-30)="aa" *!D{5},
"aa"+(0,-10)="aa" *!D{4},
"aa"+(0,-10)="aa", 
"aa"+(0,-10)="aa" *!D{2},
"aa"+(0,-10)="aa" *!D{1},
"D1"+(-.5,5)="aa", "aa" *!R{1}, 
"aa"+(10,0)="aa" *!R{2},
"aa"+(10,-50), 
"aa"+(20,0)="aa" *!R{4},
"aa"+(10,0)="aa" *!R{5},
"aa"+(12,-10) *!L{6},
"aa"+"len"+(20,0)="aa" *!R{7},
"aa"+(10,0)="aa" *!R{8},
"aa"+(10,-50), 
"aa"+(20,0)="aa" *!R{10},
"aa"+(10,0)="aa" *!R{11},
"LL3"+"vlen";
"LL1"-"vlen" **@{-};
"RR1"-"vlen" **@{-};
"RR3"+"vlen" **@{-};
"LL3"+"vlen" **@{-},
"LL2"+"hlen" *{\alpha},
\end{xy}\\
\mbox{\normalsize (a)\ \ } &&
\mbox{\normalsize (b)\ \ \rule{0pt}{1.5em}}
\end{array}
$$
\caption{$K(s_3(\pi); \alpha)$ is isotopic to $K(\pi;\alpha U_3)$}
\label{F:st2}
\end{figure}
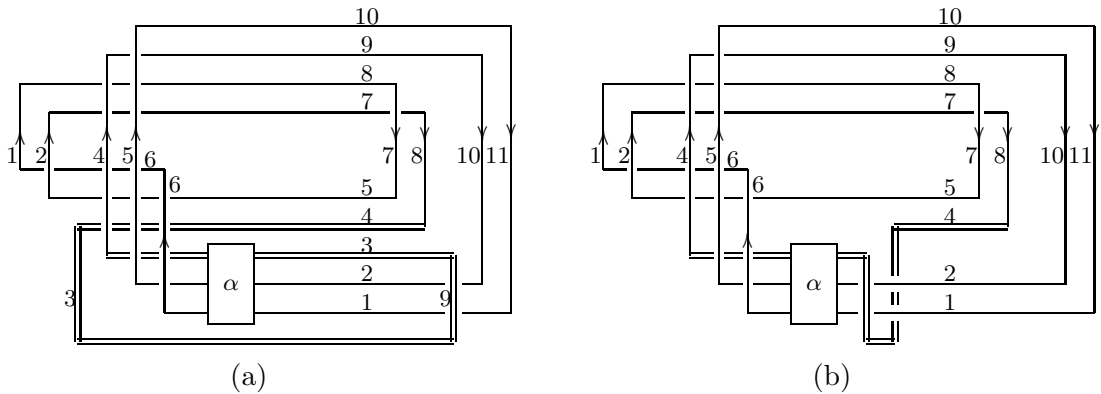

The following theorem is useful
for computing upper bounds for petal numbers.

\begin{theorem}\label{C:br}
Let $p=2n+1$ and $2\le k_1,\ldots,k_q\le n$.
If $\pi$ is a strongly braided petal permutation of length $p$, then
$$
p(K(\pi;U_{k_1}\cdots U_{k_q}))\le p+2q.
$$
\end{theorem}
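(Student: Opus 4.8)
The plan is to argue by induction on $q$, repeatedly using Proposition~\ref{T:br} to trade a factor $U_k$ for a stabilization $s_k$ of the petal permutation. The statement to be proved by induction, for each $q\ge 0$, is: for every $n\ge 1$, every strongly braided petal permutation $\pi$ of length $2n+1$, and all integers $2\le k_1,\dots,k_q\le n$, one has $p\bigl(K(\pi;U_{k_1}\cdots U_{k_q})\bigr)\le (2n+1)+2q$.

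For the base case $q=0$ the braid $U_{k_1}\cdots U_{k_q}$ is the empty product, so $K(\pi;U_{k_1}\cdots U_{k_q})$ is just $K(\pi)$, the knot of the petal grid diagram $\PG(\pi)$, which has $2n+1$ horizontal edges by Lemma~\ref{l:gr-alg}. By Lemma~\ref{l:gr}, $K(\pi)$ then has a petal diagram with $2n+1$ loops, so $p(K(\pi))\le 2n+1$, as required.

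For the inductive step I would assume the statement for $q-1$ (for all $n$) and take $\pi$ strongly braided of length $2n+1$ together with indices $2\le k_1,\dots,k_q\le n$. Since $U_2,\dots,U_n$ pairwise commute (Lemma~\ref{L:DEU}(iii) and the remark following it), I may reorder the factors so that $k_q=\max_i k_i$. Setting $\alpha=U_{k_1}\cdots U_{k_{q-1}}$, each $U_{k_i}$ ($i<q$) is a word in $\sigma_1,\dots,\sigma_{k_i-1}$ with $k_i\le k_q$, so $\alpha\in B_{k_q}$, and $U_{k_1}\cdots U_{k_q}=\alpha U_{k_q}$. Proposition~\ref{T:br} applied with $k=k_q$ then shows that $K(\pi;U_{k_1}\cdots U_{k_q})=K(\pi;\alpha U_{k_q})$ is isotopic to $K(s_{k_q}(\pi);\alpha)$. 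By Lemma~\ref{L:stBr}, $s_{k_q}(\pi)$ is again a strongly braided petal permutation, now of length $(2n+1)+2=2(n+1)+1$, and the remaining indices satisfy $2\le k_i\le n\le n+1$ for $1\le i\le q-1$. Hence the inductive hypothesis (with $n+1$ in place of $n$) applies to $K(s_{k_q}(\pi);U_{k_1}\cdots U_{k_{q-1}})$ and gives
$$p\bigl(K(s_{k_q}(\pi);\alpha)\bigr)\le \bigl(2(n+1)+1\bigr)+2(q-1)=(2n+1)+2q.$$
Since the petal number is a knot invariant and $K(\pi;U_{k_1}\cdots U_{k_q})$ is isotopic to $K(s_{k_q}(\pi);\alpha)$, this yields $p\bigl(K(\pi;U_{k_1}\cdots U_{k_q})\bigr)\le (2n+1)+2q$, completing the induction and hence the proof with $p=2n+1$.

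The proof is mostly assembly of earlier results; the one point that needs genuine care is the reordering step. Proposition~\ref{T:br} requires the braid left after removing $U_k$ to lie in $B_k$, so I must peel off a factor of \emph{maximal} index, which is legitimate only because $U_2,\dots,U_n$ commute; and I must check that a single stabilization raises the ambient parameter from $n$ to $n+1$, which keeps all the remaining $k_i$'s within the allowed range $\{2,\dots,n+1\}$, so that the inductive hypothesis genuinely applies.
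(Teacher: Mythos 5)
Your proof is correct and follows essentially the same route as the paper: reorder the commuting factors $U_{k_i}$ so that a factor of maximal index can be peeled off, apply Proposition~\ref{T:br} to trade it for a stabilization $s_{k}(\pi)$ (which stays strongly braided and gains length $2$ by Lemma~\ref{L:stBr}), and repeat; the paper phrases this as an explicit iteration with $\beta_i=U_{k_q}\cdots U_{k_i}$ and $\pi_{i+1}=s_{k_i}(\pi_i)$, while you phrase it as an induction on $q$ with the ambient parameter $n$ allowed to grow, which is the same argument.
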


\begin{proof}
Let $\beta=U_{k_1}\cdots U_{k_q}$.
Since $U_{k_1},\ldots, U_{k_q}$ mutually commute,
we have $\beta=U_{k_q}\cdots U_{k_1}$
and we may assume $k_1\ge\cdots\ge k_q$.
Let $\beta_i=U_{k_q}\cdots U_{k_i}$ for $1\le i\le q$,
and let $\beta_{q+1}$ be the identity braid.
Then each $\beta_i\in B_{k_i}$ and
$$
\beta_1=\beta,\qquad \beta_i=\beta_{i+1}U_{k_i}\quad (1\le i\le q-1),
\qquad \beta_q=\beta_{q+1}U_{k_q}=U_{k_q}.
$$
Let $\pi_1,\ldots,\pi_{q+1}$ be the petal permutations defined inductively
by $\pi_1=\pi$ and $\pi_{i+1}=s_{k_i}(\pi_{i})$ for $1\le i\le q$.
Since $\pi$ is strongly braided of length $p$,
each $\pi_i$ is strongly braided of length $p+2(i-1)$
for $1\le i\le q+1$ (by Lemma~\ref{L:stBr}).

By Proposition~\ref{T:br},
$K(\pi;\beta)=K(\pi_1;\beta_2U_{k_1})$ is isotopic to
$K(s_{k_1}(\pi_1);\beta_2)=K(\pi_2;\beta_3 U_{k_2})$.
By applying Proposition~\ref{T:br} again, it is isotopic to
$K(s_{k_2}(\pi_2);\beta_3)=K(\pi_3;\beta_4 U_{k_3})$.
Continuing this process, we conclude that
$K(\pi;\beta)$ is isotopic to $K(\pi_{q+1};\beta_{q+1})=K(\pi_{q+1})$.
Since $\pi_{q+1}$ has length $p+2q$, we are done.
\end{proof}

\begin{corollary}\label{T:11}
If $K$ is the closure of the $n$-braid $\Delta^2\delta U_{k_1}\cdots U_{k_q}$
for $2\le k_1,\ldots,k_q\le n$,
then
$$p(K)\le 2n+1+2q.$$
\end{corollary}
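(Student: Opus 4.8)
The plan is to reduce the statement directly to Theorem~\ref{C:br} via the strongly braided petal permutation $\pi_n$ from Remark~\ref{rmk}. Recall that $\pi_n=(n+1,n,\ldots,1)\odot(2n+1,2n,\ldots,n+2)$ has length $2n+1$ and is strongly braided, and that by Remark~\ref{rmk}(i) the petal grid diagram $\PG(\pi_n)$ is a closed braid diagram presenting $T_{n,n+1}$, the closure of $\delta^{n+1}=\Delta^2\delta$.

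First I would invoke Remark~\ref{rmk}(ii): inserting an $n$-braid $\alpha$ into the lowest $n$ strands of $\PG(\pi_n)$ produces a knot $K(\pi_n;\alpha)$ isotopic to the closure of $\Delta^2\delta\alpha$. Taking $\alpha=U_{k_1}\cdots U_{k_q}$, which lies in $B_n$ since each $k_i\le n$, this says that $K(\pi_n;U_{k_1}\cdots U_{k_q})$ is isotopic to the closure of $\Delta^2\delta U_{k_1}\cdots U_{k_q}$, i.e.\ to $K$.

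Then I would apply Theorem~\ref{C:br} with $\pi=\pi_n$ and $p=2n+1$: since $\pi_n$ is strongly braided of length $p$, we obtain $p(K(\pi_n;U_{k_1}\cdots U_{k_q}))\le p+2q=2n+1+2q$. Combining this with the identification of $K$ from the previous step yields $p(K)\le 2n+1+2q$, as claimed. There is essentially no obstacle here, since all the substantive work lies in Proposition~\ref{T:br} and Theorem~\ref{C:br}; the only points to verify are that the insertion convention of Remark~\ref{rmk}(ii) agrees with the braid-insertion convention of Theorem~\ref{C:br} and that $U_{k_1}\cdots U_{k_q}\in B_n$ so that the insertion is legitimate, both of which are immediate.
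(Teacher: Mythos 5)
Your proof is correct and follows essentially the same route as the paper: the paper likewise takes $\pi=(n+1,n,\ldots,1)\odot(2n+1,2n,\ldots,n+2)$, identifies $K$ with $K(\pi;U_{k_1}\cdots U_{k_q})$ via the closed-braid interpretation of the insertion (Remark~\ref{rmk}), and concludes by Theorem~\ref{C:br}. No gaps.
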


\begin{proof}
Let $\pi=(n+1,n,\ldots,1)\odot(2n+1,2n,\ldots,n+2)$.
Then $\PG(\pi)$ is a petal grid diagram of $T_{n,n+1}$
which is the closure of the braid $\delta^{n+1}=\Delta^2\delta$,
and $\PG(\pi;U_{k_1}\cdots U_{k_q})$ is a knot diagram of $K$.
Therefore $K=K(\pi;U_{k_1}\cdots U_{k_q})$
and it has petal number at most $2n+1+2q$ (by Theorem~\ref{C:br}).
\end{proof}

\begin{corollary}[Theorem A]
\label{T:1main}
Let $n$ and $s$ be relatively prime integers with $2\le n<s$.
Then
$$p(T_{n,s})\le 2s-2\left\lfloor\frac sn\right\rfloor +1.$$
\end{corollary}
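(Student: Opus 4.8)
The plan is to deduce Theorem~A directly from Corollary~\ref{C:ns} and Corollary~\ref{T:11}, so that the only real work is an elementary arithmetic identity. The starting point is the standard fact (used already in Remark~\ref{rmk}) that $T_{n,s}$ is isotopic to the closure of the $n$-braid $\delta^s$, together with the observation that conjugate braids have isotopic closures, so $p(T_{n,s})$ is unaffected if we replace $\delta^s$ by any conjugate braid.

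Concretely, I would write $s=nm+k$ with $m\ge 1$ and $1\le k\le n-1$; this is possible and $k\neq 0$ precisely because $\gcd(n,s)=1$ with $n\ge 2$, and moreover $\gcd(n,k)=\gcd(n,s)=1$ and $\lfloor s/n\rfloor=m$. By Corollary~\ref{C:ns}, $\delta^s$ is conjugate to
$$\delta\,(U_2\cdots U_n)^m\,U_{a_1}\cdots U_{a_{k-1}},\qquad a_i=\lceil ni/k\rceil.$$
Since $\Delta^2=\delta^n=U_2\cdots U_n$ is central in $B_n$, I would pull out one factor $U_2\cdots U_n=\Delta^2$ and rewrite this braid as $\Delta^2\,\delta\,W$, where $W=(U_2\cdots U_n)^{m-1}U_{a_1}\cdots U_{a_{k-1}}$ is a product of exactly $q:=(m-1)(n-1)+(k-1)$ letters $U_j$ with $2\le j\le n$; here one checks $2\le a_i\le n-1$ for $1\le i\le k-1$, since $1<n/k$ gives $a_i\ge 2$ and $ni/k\le n-n/k<n-1$ gives $a_i\le n-1$. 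Thus $T_{n,s}$ is the closure of $\Delta^2\delta U_{k_1}\cdots U_{k_q}$ for suitable $k_1,\dots,k_q\in\{2,\dots,n\}$, and Corollary~\ref{T:11} yields $p(T_{n,s})\le 2n+1+2q$.

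It then remains to verify the arithmetic: $2n+1+2q=2n+1+2(m-1)(n-1)+2(k-1)=2mn-2m+2k+1=2(nm+k)-2m+1=2s-2\lfloor s/n\rfloor+1$, as claimed; the case $s=n+1$ (so $m=k=1$, $q=0$) is included and gives $p(T_{n,n+1})\le 2n+1$. I do not expect a genuine obstacle here, as all the substance is already packaged in Corollaries~\ref{C:ns} and~\ref{T:11}; the only points that need care are the bookkeeping of which $U_j$ occur in $W$ and confirming the index bounds $2\le a_i\le n$ so that Corollary~\ref{T:11} may be applied.
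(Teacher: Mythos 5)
Your proposal is correct and follows essentially the same route as the paper: write $s=nm+k$, invoke Corollary~\ref{C:ns}, pull out the central factor $\Delta^2=U_2\cdots U_n$ to get a braid of the form $\Delta^2\delta\,U_{k_1}\cdots U_{k_q}$ with $q=(m-1)(n-1)+(k-1)=s-n-m$, and apply Corollary~\ref{T:11} together with the same arithmetic. The extra verification of the index bounds $2\le a_i\le n$ is a harmless refinement of what the paper states without detail.
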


\begin{proof}
The torus knot $T_{n,s}$ is the closure of the $n$-braid $\delta^s$.

Let $s=nm+k$ with $1\le k\le n-1$ and $m\ge 1$.
By Corollary~\ref{C:ns}, $\delta^s$ is conjugate to
$$
\delta (U_2\ldots U_n)^{m} U_{a_1}\ldots U_{a_{k-1}}
=\Delta^2 \delta (U_2\ldots U_n)^{m-1} U_{a_1}\ldots U_{a_{k-1}}
$$
for some $2\le a_1,\ldots,a_{k-1}\le n$.
The number of $U_i$'s in $(U_2\ldots U_n)^{m-1} U_{a_1}\ldots U_{a_{k-1}}$ is
$$
(n-1)(m-1)+(k-1)=nm-n-m+k=s-n-m.
$$
Therefore,
by Corollary~\ref{T:11}, $T_{n,s}$ has petal number at most
$$
2n+1+2(s-n-m)=2s-2m+1=2s-2\left\lfloor\frac sn\right\rfloor+1.
$$
\vskip-\baselineskip
\end{proof}

The following proposition was observed by Kim, No and Yoo
in the proof of Theorem~1 in \cite{KNY22}.
For readers' convenience, we bring their proof.

\begin{proposition}[\cite{KNY22}]
\label{pro:sb}
Let $n$ and $s$ be relatively prime integers with $2\le n<s<2n$.
Then
$$p(T_{n,s})\ge 2s-1.$$
\end{proposition}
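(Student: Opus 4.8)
The plan is to use the super bridge index lower bound, following Kim--No--Yoo. The key inequality is that for any knot $K$, the petal number satisfies $p(K)\ge 2\,\mathrm{sb}(K)-1$, where $\mathrm{sb}(K)$ denotes the super bridge index; this is established in \cite{ACD+15}. So it suffices to show $\mathrm{sb}(T_{n,s})\ge s$ whenever $2\le n<s<2n$.

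First I would recall the relevant known value: Kuiper computed the super bridge index of torus knots, and in particular $\mathrm{sb}(T_{n,s})=\min\{2n, s\}$ for $2\le n<s$ (more precisely, $\mathrm{sb}(T_{p,q})=\min\{p,q\}+\min\{\text{something}\}$; the exact statement I would cite is Kuiper's formula giving $\mathrm{sb}(T_{n,s})=\min\{2n,s\}$ when $n<s$). Under the hypothesis $s<2n$, this minimum equals $s$, so $\mathrm{sb}(T_{n,s})=s$. Combining with $p(T_{n,s})\ge 2\,\mathrm{sb}(T_{n,s})-1$ yields $p(T_{n,s})\ge 2s-1$, which is exactly the claim. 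The proof is therefore short: invoke the super bridge bound on petal number, invoke Kuiper's computation of $\mathrm{sb}$ for torus knots, and specialize to the range $n<s<2n$.

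The main obstacle — really the only nontrivial input — is that this relies on two external theorems (the $p(K)\ge 2\,\mathrm{sb}(K)-1$ inequality and Kuiper's super bridge formula) rather than on anything developed in this paper; since the statement explicitly attributes the proposition to \cite{KNY22} and says ``we bring their proof,'' the write-up should simply reproduce their argument with the appropriate citations. I would take care to state Kuiper's formula in the exact form needed and to check the boundary behavior: the hypothesis $s<2n$ is precisely what makes $\min\{2n,s\}=s$, so the range restriction in the proposition is not an artifact but is genuinely used. One should also note $s\ne 2n$ is automatic from $\gcd(n,s)=1$ and $n\ge 2$, so the strict inequality $s<2n$ versus $s\le 2n$ is immaterial here. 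No delicate estimates are required; the content is entirely in correctly citing and assembling the two known results.

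\begin{proof}
It is shown in \cite{ACD+15} that the petal number of any knot $K$ is bounded below by its super bridge index $\mathrm{sb}(K)$ via
$$p(K)\ge 2\,\mathrm{sb}(K)-1.$$
On the other hand, Kuiper computed the super bridge index of torus knots; for $2\le n<s$ one has $\mathrm{sb}(T_{n,s})=\min\{2n,s\}$. When $2\le n<s<2n$ this gives $\mathrm{sb}(T_{n,s})=s$, and therefore
$$p(T_{n,s})\ge 2\,\mathrm{sb}(T_{n,s})-1=2s-1.$$
\end{proof}
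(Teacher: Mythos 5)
Your proposal is correct and follows essentially the same route as the paper: combine the inequality $p(K)\ge 2\,\sb(K)-1$ with Kuiper's formula $\sb(T_{n,s})=\min\{2n,s\}$ and observe that $s<2n$ forces the minimum to be $s$. The only issue is bibliographic: the paper attributes the inequality $p(K)\ge 2\,\sb(K)-1$ to Kim--No--Yoo \cite[Theorem 2]{KNY22}, not to \cite{ACD+15}, so you should correct that citation.
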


\begin{proof}
In~\cite[Theorem 2]{KNY22},
Kim, No and Yoo proved  that for any knot $K$
$$
p(K)\ge 2 \sb(K)-1,
$$
where $\sb(K)$ denotes the super bridge index of $K$.
In~\cite[Theorem B]{Kui87}, Kuiper proved that
$$\sb(T_{n,s})=\min\{2n,s\}$$
for $2\le n<s$.
If $2\le n<s<2n$, then $\sb(T_{n,s})=\min\{2n,s\}=s$, hence
$p(T_{n,s})\ge 2\sb(T_{n,s})-1=2s-1$.
\end{proof}

\begin{corollary}[Theorem B]\label{T:2main}
Let $n$ and $s$ be relatively prime integers with $2\le n<s<2n$.
Then
$$p(T_{n,s})=2s-1.$$
\end{corollary}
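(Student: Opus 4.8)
The statement is an immediate consequence of matching upper and lower bounds already in hand. First I would observe that the hypothesis $2\le n<s<2n$ forces $\left\lfloor\frac sn\right\rfloor=1$, since $1\le \frac sn<2$. Then Theorem~A (Corollary~\ref{T:1main}) specializes to the upper bound
$$
p(T_{n,s})\le 2s-2\left\lfloor\frac sn\right\rfloor+1=2s-2+1=2s-1.
$$
Next I would invoke Proposition~\ref{pro:sb}, which under the same hypothesis $2\le n<s<2n$ gives the reverse inequality $p(T_{n,s})\ge 2s-1$ via Kuiper's computation of the super bridge index and the Kim--No--Yoo inequality $p(K)\ge 2\,\sb(K)-1$. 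Combining the two displayed inequalities yields $p(T_{n,s})=2s-1$, which is the assertion.

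There is essentially no obstacle: all the substance lies in Theorem~A (whose proof occupies the bulk of the paper, through the braid-conjugacy result Theorem~\ref{T:Br}, Corollary~\ref{C:ns}, the braid-insertion/stabilization machinery, and Corollary~\ref{T:11}) and in the already-cited Proposition~\ref{pro:sb}. The only thing to be careful about is the floor computation, which is immediate from the strict inequalities $n<s$ and $s<2n$. So the write-up is just the two-line synthesis above.

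\begin{proof}
Since $2\le n<s<2n$, we have $1\le \frac sn<2$, hence $\left\lfloor\frac sn\right\rfloor=1$.
By Corollary~\ref{T:1main},
$$
p(T_{n,s})\le 2s-2\left\lfloor\frac sn\right\rfloor+1=2s-1.
$$
On the other hand, $p(T_{n,s})\ge 2s-1$ by Proposition~\ref{pro:sb}.
Therefore $p(T_{n,s})=2s-1$.
\end{proof}
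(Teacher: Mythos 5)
Your proof is correct and coincides with the paper's own argument: combine the upper bound from Corollary~\ref{T:1main} with $\lfloor s/n\rfloor=1$ (which follows from $n<s<2n$) and the lower bound from Proposition~\ref{pro:sb}. Nothing to add.
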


\begin{proof}
By Proposition~\ref{pro:sb}, $p(T_{n,s})\ge 2s-1$.
By Corollary~\ref{T:1main}, $p(T_{n,s})\le 2s-1$
because $2\le n<s< 2n$ implies $\lfloor \frac sn\rfloor=1$.
\end{proof}

The following example shows how to compute a petal permutation
for a torus knot $T_{n,s}$.
In particular, if $2\le n<s<2n$, then we can see (by the above corollary)
that the petal permutation obtained in this way realizes the petal number $p(T_{n,s})$.

\begin{example}
Consider the torus knots $T_{5,6}$, $T_{5,7}$, $T_{5,8}$ and $T_{5,9}$.
Notice that $T_{5,6}=K(\pi)$, where
$$
\pi=(6,5,4,3,2,1)\odot(11,10,9,8,7).
$$

By Theorem~\ref{T:Br}, the 5-braid $\delta^2$ is conjugate to $\delta U_3$
(because $a_1=\lceil\frac{5}{2}\rceil=3$),
hence $\delta^7=\delta^5\cdot\delta^2$ is conjugate to $\Delta^2\delta U_3$.
Therefore,
by Proposition~\ref{T:br} and Lemma~\ref{L:stBr},
$T_{5,7}=K(s_3(\pi))$ and
$$
s_3(\pi)=(7,\ldots,1)\odot(12,11,10,\underline{13},9,8).
$$

Similarly, $\delta^3$ is conjugate to $\delta U_2U_4$
(because $a_1=\lceil\frac{5}{3}\rceil=2$, $a_2=\lceil\frac{5\cdot 2}{3}\rceil=4$),
hence $\delta^8=\delta^5\cdot\delta^3$ is conjugate to $\Delta^2\delta U_2U_4$.
Therefore
$T_{5,8}=K((s_2\circ s_4)(\pi))$ and
\begin{align*}
s_4(\pi)&=(7,\ldots,1)\odot(12,11,\underline{13},10,9,8),\\
s_2(s_4(\pi)) &=(8,\ldots,1)\odot(13,12,14,11,10,\underline{15},9).
\end{align*}

Similarly, $\delta^4$ is conjugate to $\delta U_2U_3U_4$
(because $a_1=\lceil\frac{5}{4}\rceil=2$, $a_2=\lceil\frac{5\cdot 2}{4}\rceil=3$,
$a_3=\lceil\frac{5\cdot 3}{4}\rceil=4$),
hence $\delta^9=\delta^5\cdot\delta^4$ is conjugate to $\Delta^2\delta U_2U_3U_4$.
Therefore $T_{5,9}=K((s_2\circ s_3\circ s_4)(\pi))$ and
\begin{align*}
s_4(\pi)&=(7,\ldots,1)\odot(12,11,\underline{13},10,9,8),\\
s_3(s_4(\pi)) &=(8,\ldots,1)\odot(13,12,14,11,\underline{15},10,9),\\
s_2(s_3(s_4(\pi))) &=(9,\ldots,1)\odot(14,13,15,12,16,11,\underline{17},10).
\end{align*}

\end{example}


\begin{thebibliography}{MM}
\bibitem[Ada12]{Ada12}
C.~Adams,
Triple crossing number of knots and links,
J.~Knot Theory Ramifications 22 (2013) 1350006.

\bibitem[Ada14]{Ada14}
C.~Adams,
Quadruple crossing number of knots and links,
Math.\ Proc.\ Cambridge Philos.\ Soc.\ 156 (2014) 241--253.

\bibitem[ACD+15]{ACD+15}
C.~Adams, T.~Crawford, B.~DeMeo, M.~Landry, A.T.~Lin,
M.~Montee, S.~Park, S.~Venkatesh and F.~Yhee,
Knot projections with a single multi-crossing,
J. Knot Theory Ramifications 24 (2015) 1550011.

\bibitem[Bir74]{Bir74} 
J.\ Birman, 
Braids, links and the mapping class group, 
Ann.\ Math.\ Studies 82,
Princeton Univ.\ Press (1974).

\bibitem[BKL98]{BKL98}
J.S.\ Birman, K.H.\ Ko and S.J.\ Lee, 
New approaches to the world and conjugacy problem in the braid groups, 
Advances in Math.\ 139 (1998) 322--353.


%

\bibitem[ECH+92]{ECH+9}
D.B.A.\ Epstein, J.W.\ Cannon, D.F.\ Holt, S.V.F.\ Levy, M.S.\ Patterson and W.\ Thurston, 
Word processing in groups, Jones and Barlett, Boston and London (1992).


\bibitem[KNY22]{KNY22}
H.~Kim, S.~No and H.~Yoo,
Petal number of torus knots using super bridge indices,
J.~Knot Theory Ramifications 31 (2022) 2250096.

\bibitem[Kui87]{Kui87}
N.H.~Kuiper,
A new knot invariant,
Math.\  Ann.\ 278 (1987) 193--210.

\bibitem[LJ21]{LJ21}
H.J.~Lee and G.T.~Jin,
Petal number of torus knots of type $(r, r+2)$,
J.~Knot Theory Ramifications 32 (2023) 2340007.




\bibitem[Nie23]{Nie23}
Z.~Nie,
Petal diagram for simple braids,
arxiv:2310.095689v1 (2023).
\end{thebibliography}
\end{document}